\newcommand{\rar}{\rightarrow}
\newcommand{\U}{\mathcal{U}}
\DeclareMathOperator*{\find}{find}
\newtheorem{assumption}{Assumption}[section]
\newtheorem{definition}{Definition}[section]
\newcommand{\footremember}[2]{%
    \footnote{#2}
    \newcounter{#1}
    \setcounter{#1}{\value{footnote}}%
}
\newcommand{\footrecall}[1]{%
    \footnotemark[\value{#1}]%
}
\title{Function Design for Improved Competitive Ratio in Online Resource Allocation with Procurement Costs}
\author{%
  Mitas Ray\footremember{uwece}{Department of Electrical \& Computer Engineering, University of Washington}\footnote{\texttt{mitasray@uw.edu}}%
  \and Omid Sadeghi\footrecall{uwece}%
  \and Lillian J. Ratliff\footrecall{uwece}%
  \and Maryam Fazel\footrecall{uwece}%
  }
\date{}
\begin{document}

\maketitle

\begin{abstract}
We study the problem of online resource allocation, where multiple customers
arrive sequentially 
and the seller 
must irrevocably allocate resources to each incoming customer while also facing a procurement cost for the total allocation.
Assuming resource procurement follows an a priori 
known marginally increasing 
cost function, the objective is to maximize the reward obtained from fulfilling the customers' requests sans the cumulative procurement cost. 
We analyze the competitive ratio of a primal-dual algorithm in this setting, and develop an optimization framework for synthesizing a surrogate function for the procurement cost function to be used by the algorithm, in order to improve the competitive ratio of the primal-dual algorithm. 


Our first design method focuses on polynomial procurement cost functions and uses the optimal surrogate function to provide a more refined bound than the state of the art. 
Our second design method uses quasiconvex optimization to find optimal design parameters for a general class of procurement cost functions. 
Numerical examples are used to illustrate the design techniques. 
We conclude by extending the analysis to devise a posted pricing mechanism in which the algorithm does not require the customers' preferences to be revealed.



\end{abstract}


\section{Introduction}
\label{sec:introduction}
In the problem of online resource allocation, a seller is allocating $D$ types of resources to $T$ incoming customers. The $t$-th customer has a payment function, denoted $v_t:\R_+^D\rar\R_+$, which satisfies a natural set of assumptions listed in Assumption \ref{assump:v}. The payment function reveals how much a customer will pay for any assigned bundle of resources. The seller has a procurement cost function, denoted by $f:\R^D_+\rar\R_+$, which represents the cost incurred by the seller in procuring the resources in the cumulative allocation, and is known to the seller a priori. The procurement cost function satisfies Assumption \ref{assump:f}. We use $\vec{x}_t\in [0,1]^D$ to denote the bundle allocated to customer $t$, where the $d$-th entry of $\vec{x}_t$ represents the amount of the $d$-th type of resource in this bundle. The goal of the seller is to maximize the revenue collected from the assigned bundles to the customers minus the procurement cost of the cumulative allocation. Had the seller known the $T$ customers' payment functions beforehand, then the optimal allocation would be the result of the following \textit{offline} optimization problem:

\begin{equation}
\label{eq:primal} 
\begin{array}{ll}
\maximize& \sum_{t=1}^T v_t\fp{\vec{x}_t} - f\fp{\sum_{t=1}^T \vec{x}_t}\\
\mbox{subject to}& \vec{0} \preceq \vec{x}_t \preceq \vec{1} \spforall t\in[T]\\
\end{array},\tag{P-1}
\end{equation}
where $\vec{x}_t \in [0,1]^D$ for $t=1,\ldots, T$ is the optimization variable. The challenge of online resource allocation comes from its \textit{online} nature; the seller does not have any knowledge of the future customers and the seller must make an irrevocable allocation upon the arrival of each customer.


Online resource allocation has been studied extensively in the setting of fixed resource capacities \cite{Blumrosen2007,Balcan2008,Chakraborty2013},
where there is a hard budget on each type of resource, and the unlimited supply setting \cite{Balcan2005,Balcan2008}, where the seller has unlimited access to each resource type. 
However, in many real-world situations, additional resources 
may be procured albeit at increasing marginal costs, such as energy costs for running computer processors \cite{Makarychev2014,Andrews2016} and hiring costs for skilled labor \cite{Blatter2012}. This motivates the problem of online resource allocation with procurement costs \cite{Blum2011}. 

Past literature \cite{Blum2011, Huang2018} considers the procurement cost function to be \textit{separable}---i.e., the total cost incurred is the sum of the individual procurement costs for each resource. 
The work of \cite{Blum2011}, which was further tightened in \cite{Huang2018}, 
propose an online mechanism
in which the seller determines the price of a particular item as a function of how much has already been sold, and the customer then chooses the bundle that maximizes their valuation function. In both works, it is assumed that the procurement cost function is separable, and so the cost of producing one item, has no effect on the cost of producing another. However, in a real-world setting there may exist limited procurement infrastructure where procuring one resource would increase the cost of procuring another. It is important then, to generalize this setting and consider procurement cost functions that are \textit{non-separable}.

While the work of \cite{Chan2015} studies the setting of non-separable procurement costs, the assumptions that the authors make essentially restricts the procurement cost function to polynomials. Therefore, the class of separable procurement cost functions has not been truly generalized, and there is no strategy on how to handle procurement cost functions which do not meet their stringent assumptions. We, on the other hand, in Theorem \ref{thm:quasiconvex}, drop the assumptions that restrict the function class to polynomials allowing us to consider general non-separable procurement cost functions. 



Many algorithms in this setting are primal-dual algorithms, which comes from updating the dual variable at each time step and using it to assign the primal variable \cite{Buchbinder2007, Buchbinder2009, Devanur2012, Agrawal2014, Azar2016, Eghbali2016}. 
One measure of algorithm performance in online optimization is the competitive ratio, which is defined as the ratio of the objective value achieved by the algorithm to the offline optimum (see Section \ref{sec:competitive_ratio}). The competitive ratio we consider is under the adversarial arrival order, where the seller has no knowledge of the arriving customers or the order of their arrival. 
For more details on different arrival models, we refer readers to Section 2.2 in \cite{Mehta2013}.

The problem of online resource allocation appears often in the operations research community for problems like airline revenue management \cite{hwang2018online, jaillet2012near}, hospital appointment scheduling \cite{legrain2013stochastic, erdogan2015online}, and bidding in auctions \cite{bertsimas2009optimal}, amongst others. Many of the underlying assumptions in these problems, are different however, from the ones we make in our setting. For example, \cite{hwang2018online} considers the arrival time of a fraction of agents to be chosen by an adversary, while the remaining agents come at random times. The optimization problems are also formulated differently for each setting; for example, \cite{legrain2013stochastic} considers a linear objective with budget constraints. Nonetheless, these setups encourage us to scrutinize our assumptions in order to capture many problem settings. Section \ref{sec:examples} enumerates a few motivating applications of the framework proposed in this paper. For more details on related work, see Section \ref{sec:related}.

\subsection{Contributions}
We analyze a greedy primal-dual algorithm, formalized in Algorithm \ref{alg:sim_update} in which a surrogate function is used in place of the known procurement cost function in order to optimize the performance of the algorithm. We discuss a simple example in Section \ref{sec:numerical} to show that the competitive ratio of the greedy primal-dual algorithm without a surrogate function approaches zero asymptotically, illustrating the necessity of a surrogate function. Our main contributions come in the design of the surrogate function.
\begin{itemize}
    \item For polynomial procurement cost functions, we design a surrogate function to be used in the algorithm that achieves a better competitive ratio than the state of the art result in \cite{Chan2015}. Furthermore, due to a lower bound result in \cite{Huang2018} (Theorem 10), we know that our result is tight. Our result is stated formally in Theorem \ref{thm:polynomial}.
    \item For general procurement cost functions, we write the surrogate function design problem as a quasiconvex optimization problem in which the optimization variables define the function. This strategy comes from adopting an optimization perspective for maximizing the competitive ratio similar to \cite{Eghbali2016}. This technique allows us to construct surrogate functions for a wide class of procurement cost functions beyond those that are separable \cite{Huang2018} and polynomial \cite{Chan2015}. Our result is stated formally in Theorem \ref{thm:quasiconvex}.
    \item We propose Algorithm \ref{alg:seq_update_offset} which computes the primal and dual variables one after the other, as opposed to solving a saddle-point problem, as does Algorithm \ref{alg:sim_update}. 
    We extend the quasiconvex surrogate function design technique to this algorithm. Our results are stated formally in Theorems \ref{thm:quasiconvex_seq_0} and \ref{thm:quasiconvex_seq_1}. 
\end{itemize}


We complement our theoretical results with simulations in which we implement our design techniques on a numerical example and better performance over the current state of the art.


\subsection{Organization}
This paper is organized as follows. We close this section with a few motivating examples to show the generality of our framework. Then, we discuss preliminaries on convex optimization in Section \ref{sec:preliminaries}. We introduce the formal problem statement and primal-dual algorithm in Section \ref{sec:problem}. We analyze the competitive ratio for our primal-dual algorithm in Section \ref{sec:analysis}. We then propose our surrogate function design techniques in Section \ref{sec:surrogate}. In Section \ref{sec:numerical}, we implement our design techniques on a numerical example. In Section \ref{sec:posted_pricing}, we extend the competitive analysis and design techniques to another primal-dual algorithm which computes the primal and dual variables sequentially.

\subsection{Motivating Examples}
\label{sec:examples}
To illustrate applicability, we provide a number of online resource allocation problems which can be cast in the proposed framework described in Problem \eqref{eq:primal}. In each application, we describe the incoming valuation functions, $\vec{v}_t$, and the cost function, $f$. We also describe what our decision vector at time $t$---i.e., $\vec{x}_t$, represents.

\paragraph{Online auction.} A seller has a set of $D$ items and $T$ customers arrive sequentially. Let $\vec{x}_t\in [0,1]^D$ represent the decision vector at time $t$ representing the bundle allocated to customer $t$. Each item can be included in a bundle a maximum of once. Hence, the decision vector is constrained to $\vec{0}\preceq \vec{x}_t \preceq \vec{1}$. The payment function $v_t:\R_+^D\rar\R_+$ is revealed by the $t$--th customer upon arrival.
The procurement function is denoted $f:\R_+^D\rar\R_+$. The objective of the seller is to maximize their profit---i.e., the sum of the payments of the customers minus the procurement cost of the total allocation. Variations of this framework are discussed in \cite{Bartal2003, Chan2015, Huang2018}.

\paragraph{Data market.} A manager supervises a set of $D$ experts with differing expertise. Data analysis tasks, such as classifying medical images, arrive online sequentially and each task can be assigned to any subset of the experts. Upon arrival, task $t$ reveals a vector $\vec{c}_t$ where $\bracket{\vec{c}_t}_d$ quantifies the value that expert $d$ would provide the manager if assigned to task $t$. The value function is linear---i.e., $v_t\fp{\vec{x}_t}=\vec{c}_t^\top\vec{x}_t$. When a task is assigned to an expert, the amount of time they are being paid to spend on it is bounded. Therefore, the decision vector is constrained to $\vec{0}\preceq \vec{x}_t \preceq \vec{1}$. The manager is responsible for paying for the experts' time and the resources needed for the experts to do their work, which is captured in a cost function $f:\R_+^D\rar\R_+$. The cost of hiring skilled labor is marginally increasing and follows a convex cost function \cite{Blatter2012}. The objective of the manager is to maximize the value of the completed work minus the cost of getting the work completed. Variations of this application are mentioned in \cite{Ho2012}.

\paragraph{Network routing with congestion.} A network routing agent has a set of $D$ pairs of terminals and $T$ users arrive online with valuation functions over these routed connections. Since each pair of terminals can be assigned to a user at most once, the decision vector $\vec{x}_t$ is constrained to $\vec{0}\preceq \vec{x}_t \preceq \vec{1}$. Let $v_t:\R_+^D\rar\R_+$ represent the payment function that each customer reveals upon arrival and let $f:\R_+^D\rar\R_+$ denote the congestion cost function which can represent the energy needed to maintain the routed connections. Since energy usage follows dis-economies of scale---i.e., energy usage is super-linear in terms of processor speed \cite{Makarychev2014,Andrews2016}, $f$ satisfies Assumption \ref{assump:f}.
The objective of the network routing agent is to maximize the valuations of the customers minus the energy costs of the cumulative assignment. Variations of this framework are discussed in \cite{Blum2011}.

\section{Preliminaries}
\label{sec:preliminaries}
In this section, we review mathematical preliminaries as needed for the technical results.  

%
Throughout, we will use boldface symbols to denote vectors. For a $D$-dimensional vector $\vec{u}\in \R^D$, let $u_i$, or equivalently $\bracket{\vec{u}}_i$,  denote the $i$-th entry. The inner product of two vectors $\vec{u},\vec{v}\in \R^D$ is denoted  $\inprod{\vec{u}}{\vec{v}}$ or,  equivalently, $\vec{u}^\top\vec{v}$.
The generalized inequality with respect to the non-negative orthant is denoted $\vec{u}\succeq\vec{v}$, and is equivalent to $u_i\geq v_i$ for all $i$.
Define $\1{\vec{u}\succeq\vec{v}}$ to be 
the  vector where the $i$-th entry equals one if $u_i\geq v_i$ and zero otherwise.

Several function properties are need for the analysis in this paper. A function $f:\R^D\rar\R$ is separable if it can be written as
\[f\fp{\vec{u}}= \sum_{i=1}^Df_i\fp{u_i}.\]
A function $f:\R^D\rar\R$ is convex if $\dom f$ is a convex set and for all $\vec{u},\vec{v} \in \dom f$, \[f\fp{\theta \vec{u} + \paren{1-\theta}\vec{v}} \leq \theta f\fp{\vec{u}} + \paren{1-\theta} f\fp{\vec{v}}\]
for any $\theta \in \bracket{0,1}$.
A function $f:\R^D\rar\R$ is quasiconvex if $\dom{f}$ is a convex set and for each $\alpha\in\R$, the sub-level set, $S_\alpha=\setcond{\vec{u}\in\dom{f}}{f\fp{\vec{u}}\leq\alpha}$ is a convex set. A function $f:\R^D\rar\R$ is closed if for each $\alpha\in\R$, the sub-level set, $S_\alpha$, is a closed set. 

Given a function $f:\R^D\rar\R$, its convex conjugate $f^\ast:\R^D\to \R$ is defined be \[f^*\fp{\vec{v}} = \sup_{\vec{u}} \vec{v}^\top \vec{u} - f\fp{\vec{u}}.\] For any function $f$ and its convex conjugate $f^\ast$, the Fenchel-Young inequality holds for every $\vec{u},\vec{v}\in \R^D$: \begin{equation}
    f\fp{\vec{u}}+f^*\fp{\vec{v}}\geq \vec{u}^\top\vec{v}.\label{eq:fenchel}
\end{equation} For a differentiable, closed and convex function $f$, its gradient is given by $\nabla f\fp{\vec{u}} = \argmin_{\vec{v}} \vec{v}^\top\vec{u} - f^*\fp{\vec{v}}$, and furthermore,  $f^{**} = f$. Letting $\vec{v}=\nabla f\fp{\vec{u}}$, the Fenchel-Young inequality holds with equality:
\begin{equation}
f\fp{\vec{u}}+f^*\fp{\nabla f\fp{\vec{u}}} = \vec{u}^\top\nabla f\fp{\vec{u}}.
\label{eq:fenchel-young}
\end{equation}

Similarly, given a function $g:\R^D\rar\R$,  the concave conjugate $g_*:\R^D\rar\R$  is defined by \[g_*\fp{\vec{v}} = \inf_{\vec{u}} \vec{v}^\top \vec{u} - g\fp{\vec{u}}.\]
An analogous inequality to \eqref{eq:fenchel} holds:
$g\fp{\vec{u}}+g_*\fp{\vec{v}}\leq \vec{u}^\top\vec{v}$ for all $\vec{u},\vec{v}\in \R^D$.
 For a differentiable, closed and concave function $g$, its gradient is given by  $\nabla g\fp{\vec{u}} = \argmax_{\vec{v}} \vec{v}^\top\vec{u} - g_*\fp{\vec{v}}$ and, furthermore,   $g_{**} = g$. Again, with $\vec{v}=\nabla g\fp{\vec{u}}$,  Fenchel-Young inequality with equality:
\begin{equation}
g\fp{\vec{u}}+g_*\fp{\nabla g\fp{\vec{u}}} = \vec{u}^\top\nabla g\fp{\vec{u}}.
\label{eq:fenchel-young-concave}
\end{equation}

Finally, the index set $\set{1,\dots,K}$ is denoted $\bracket{K}$.

\section{Problem Statement}
\label{sec:problem}
To begin this section, we formalize the problem statement described in Section \ref{sec:introduction} by explicitly describing the online and offline components as well as the assumptions made on the payment functions of the customers and the procurement cost function of the seller. 

As described in Section \ref{sec:introduction}, had the seller known all the customers that were to arrive, they would solve Problem \eqref{eq:primal} to obtain the optimal allocation to make to each customer. We denote the optimal value of Problem \eqref{eq:primal} as $P^{\star}$. However, the challenge faced by the seller is that they have no knowledge of future customers, and so the seller must make decisions that trade off making a hefty profit now with saving resources to potentially make a larger profit later. The seller knows the procurement cost function, $f$, prior to any customers arriving. Upon arrival, the customer reveals their payment function, $v_t$, and the seller must then make an irrevocable allocation before interacting with the next customer. In Section \ref{sec:posted_pricing}, we discuss an algorithm which does not require the customer to reveal their payment function. We have the following assumptions on the payment function of each customer. 

\begin{assumption}
\label{assump:v}
The function $v_t:\R_+^D\rar\R_+$ satisfies the following:
\begin{enumerate}[itemsep=0pt]
\item $v_t$ is concave, differentiable, and closed.
\item $v_t$ is increasing; i.e., $\vec{u}\succeq\vec{v}$ implies that $v_t\fp{\vec{u}}\geq v_t\fp{\vec{v}}$.
\item $v_t\fp{\vec{0}}=0$. 
\end{enumerate}
\end{assumption}
Concavity in Assumption \ref{assump:v}(1) comes from the idea that a customer is willing to pay marginally less for a larger bundle, which comes from the natural desire for the customer to receive a \textit{bulk discount}. Assumption \ref{assump:v}(2) reflects the customer's willingness to pay more for a larger bundle and Assumption \ref{assump:v}(3) states that a customer would pay nothing for an empty bundle.

The procurement cost function satisfies the following assumptions.
\begin{assumption}
\label{assump:f}
The function $f:\mathbb{R}_+^D\to \mathbb{R}_+$ satisfies the following:
\begin{enumerate}[itemsep=0pt]
\item $f$ is convex, differentiable, and closed.
\item $f$ is increasing; i.e., $\vec{u}\succeq\vec{v}$ implies that $f\fp{\vec{u}}\geq f\fp{\vec{v}}$.
\item $f$ has an increasing gradient; i.e., $\vec{u}\succeq\vec{v}$ implies that $\nabla f\fp{\vec{u}}\succeq \nabla f\fp{\vec{v}}$.
\item $f\fp{\vec{0}}=0$. 
\end{enumerate}
\end{assumption}
Convexity in Assumption \ref{assump:f}(1) along with Assumption \ref{assump:f}(3) captures the idea that procuring scarce resources comes at an increasing marginal cost. Assumption \ref{assump:f}(2) comes from a larger cumulative allocation incurring a larger production cost and Assumption \ref{assump:f}(4) states that the seller incurs no cost for allocating nothing.

\subsection{Performance Metric}
\label{sec:competitive_ratio}
The performance of an algorithm making allocations in this setting is evaluated by its competitive ratio. The competitive ratio is the ratio of the objective value achieved by the algorithm to the offline optimum for all possible instances. We provide the formal definition below.

\begin{definition}[Competitive Ratio]
Consider the set of decision vectors produced by an algorithm, ALG, as 
$\set{\bar{\vec{x}}_t}_{t=1}^T$ and the offline optimal decision vector that achieves $P^{\star}$ from Problem \eqref{eq:primal} as $\set{\vec{x}_t^{\star}}_{t=1}^T$. 
Then, ALG has a competitive ratio of $\alpha$ if
\[\alpha \leq \frac{\text{ALG}}{P^{\star}} = \frac{\sum_{t=1}^Tv_t\fp{\bar{\vec{x}}_t} - f\fp{\sum_{t=1}^T \bar{\vec{x}}_t}}{\sum_{t=1}^Tv_t\fp{\vec{x}_t^{\star}} - f\fp{\sum_{t=1}^T \vec{x}_t^{\star}}}\]
for all $\set{v_t}_{t=1}^T$.
\end{definition}
Note that $\alpha\in\bracket{0,1}$ and the closer to $1$, the better the algorithm.

\subsection{Primal-Dual Algorithm}
\label{sec:algorithm}
We now present the primal-dual algorithm for the online optimization problem with procurement costs. We first formulate the dual of \eqref{eq:primal} which is given by
\begin{equation}
\label{eq:dual}
D^{\star}:=\minimize_{\boldsymbol\lambda \succeq \vec{0},\vec{z}_t\succeq\vec{0}}~\sum_{t=1}^T \sum_{d=1}^D\max\!\set{\bracket{\vec{z}_t}_d-\bracket{\boldsymbol{\lambda}}_d, 0} - \sum_{t=1}^Tv_{t*}\fp{\vec{z}_t} + f^*\fp{\boldsymbol{\lambda}}.
\tag{D-1}
\end{equation}
From the computation of \eqref{eq:dual}---which is given in Appendix \ref{sec:computing_dual}---we develop an algorithm in which a greedy solution is obtained at time $t$ given previous decisions. The greedy solution solves a marginal optimization problem for the $t$-th time step considering that decisions for time steps $\bracket{t-1}$
have already been made. Let $\bar{\vec{x}}_i$ denote the decision made by an algorithm at time $i$. The greedy solution at time $t$ is the result of
\begin{equation}
\label{eq:marginal_opt}
\maximize_{\vec{0}\preceq \vec{x}_t\preceq\vec{1}}~
v_t\fp{\vec{x}_t} - f\fp{\sum_{i=1}^{t-1}\bar{\vec{x}}_i+\vec{x}_t} + f\fp{\sum_{i=1}^{t-1}\bar{\vec{x}}_i}.
\tag{M-1}
\end{equation}
The objective of \eqref{eq:marginal_opt} represents the gain in the objective of \eqref{eq:primal} at time $t$ if we make decision $\vec{x}_t$, since the decisions $\set{\bar{\vec{x}}_i}_{i=1}^{t-1}$ have already been made and cannot be changed. From Assumption \ref{assump:f}(1), we know that $f=f^{**}$, and from Assumption \ref{assump:v}(1), we know that $v_t=v_{t**}$, which allows us to re-write \eqref{eq:marginal_opt} as 
\begin{equation}
\label{eq:marginal_opt_dual}
\maximize_{\vec{0}\preceq \vec{x}_t\preceq\vec{1}}~\minimize_{\boldsymbol\lambda \succeq \vec{0},\vec{z}_t\succeq\vec{0}}~ \vec{z}_t^\top\vec{x}_t - v_{t*}\fp{\vec{z}_t} - \boldsymbol{\lambda}^\top\!\paren{\sum_{i=1}^{t-1}\bar{\vec{x}}_i+\vec{x}_t} + f^*\fp{\boldsymbol{\lambda}} + f\fp{\sum_{i=1}^{t-1}\bar{\vec{x}}_i}.
\tag{M-2}
\end{equation}

A greedy algorithm using this decision rule makes an allocation at time $t$ based on the incoming $v_t$, the previous decisions made, and $f$. In order to improve the performance of this algorithm with unknown future functions $v_t$, we ask the following question: \emph{can we design a surrogate function such that decisions made with respect to this function give a better competitive ratio for our original problem?}
Consider the following optimization problem, with the surrogate function denoted by $f_s$,
\begin{equation}
\label{eq:primal_fs} 
\begin{array}{rl}
\maximize& \sum_{t=1}^Tv_t\fp{\vec{x}_t} - f_s\fp{\sum_{t=1}^T \vec{x}_t}\\
\mbox{subject to}& \vec{0} \preceq \vec{x}_t \preceq \vec{1} \spforall t\in[T]\\
\end{array},\tag{P-2}
\end{equation}
where $\vec{x}_t \in [0,1]^D$ is the optimization variable and $v_t:\R^D_+\rar\R_+$ satisfies Assumption \ref{assump:v}. The only difference between Problem \eqref{eq:primal} and Problem \eqref{eq:primal_fs} is that $f$ has been replaced by $f_s$,
which satisfies the following assumptions.
\begin{assumption}
\label{assump:fs}
The function $f_s: \mathbb{R}_+^D\to \mathbb{R}_+$ satisfies the following:
\begin{enumerate}[itemsep=0pt]
\item $f_s$ is convex, differentiable, and closed.
\item $f_s$ is increasing; i.e., $\vec{u}\succeq\vec{v}$ implies $f_s\fp{\vec{u}}\geq f_s\fp{\vec{v}}$.
\item $f_s$ has an increasing gradient; i.e., $\vec{u}\succeq\vec{v}$ implies $\nabla f_s\fp{\vec{u}}\succeq 
\nabla f_s\fp{\vec{v}}$.
\item $f_s\fp{\vec{0}}=0$.
\item $f_s\fp{\vec{u}}\geq f\fp{\vec{u}}$ for all $\vec{0}\preceq\vec{u}\preceq T\vec{1}$.
\end{enumerate}
\end{assumption}
Assumptions \ref{assump:fs}(1)-(4) are identical to Assumptions \ref{assump:f}(1)-(4). Assumption \ref{assump:fs}(5) is designed to make sure the resulting algorithm makes allocations more cautiously than the greedy algorithm without a surrogate function in order to best handle the uncertainty of the future customers. A simple example to illustrate this intuition is provided in Section \ref{sec:numerical}. We discuss our choice of the surrogate function in more detail in Section \ref{sec:surrogate}.

Using the same strategy as above of writing the marginal optimization problem, now with respect to Problem \eqref{eq:primal_fs}, we can write the decision rule of Algorithm \ref{alg:sim_update}.
\begin{algorithm}
\SetKwInOut{Input}{Input}
\Input{$f_s:\R^D\rar\R$}
 \For{$t=1, \ldots, T$}{
  receive $v_t$\;
  $\paren{\bar{\boldsymbol{\lambda}}_t, \bar{\vec{z}}_t, \bar{\vec{x}}_t} = \argmin\limits_{\boldsymbol{\lambda}\succeq \vec{0},\vec{z}_t\succeq\vec{0}}~\max\limits_{\vec{0}\preceq \vec{x}_t\preceq\vec{1}}~ \vec{z}_t^\top\vec{x}_t - v_{t*}\fp{\vec{x}_t} - \boldsymbol{\lambda}^\top\!\paren{\sum_{i=1}^{t-1}\bar{\vec{x}}_i+\vec{x}_t} + f_s^*\fp{\boldsymbol{\lambda}}$\;
 }
 \caption{Simultaneous Update} \label{alg:sim_update}
\end{algorithm}

Line 3 in Algorithm \ref{alg:sim_update}, the main computational step of the algorithm, involves solving a (convex-concave) saddle-point problem. 
We point out that standard convex optimization methods (see, e.g., \cite{Bubeck2015}) 
can be used to solve this subproblem with desired accuracy, and the complexity analysis of these methods (number of iterations needed to reach  $\epsilon$-optimality) can be incorporated in the overall computational complexity analysis of our algorithm.
In Section \ref{sec:posted_pricing}, we discuss an algorithm that computes the primal and dual variables sequentially.

In the remainder of this section, let $\bar{\vec{x}}_t$ denote the decision vector at time $t$ given from Algorithm \ref{alg:sim_update} called with $f_s$. Algorithm \ref{alg:sim_update} called with $f_s$ ensures that at every time step $t$, 
\begin{equation}
\label{eq:primal_decision}
\bar{\vec{x}}_t = \1{\bar{\vec{z}}_t - \bar{\boldsymbol{\lambda}}_t \succeq \vec{0}},
\end{equation}
where $\bar{\boldsymbol{\lambda}}_t = \nabla f_s\fp{\sum_{i=1}^t\bar{\vec{x}}_i}$ and $\bar{\vec{z}}_t = \nabla v_t\fp{\bar{\vec{x}}_t}$.

The superscript notation of $s$---taken from \textit{surrogate}---denotes the objective of Problem \eqref{eq:primal} resulting from the decision vectors coming from Algorithm \ref{alg:sim_update} called with $f_s$. The primal objective is given by
\begin{equation}
\label{eq:Ps}
P^s := \sum_{t=1}^Tv_t\fp{\bar{\vec{x}}_t} - f\fp{\sum_{t=1}^T\bar{\vec{x}}_t},
\end{equation}
and the dual objective is given by
\begin{equation}
\label{eq:Ds}
D^s := \sum_{t=1}^T \sum_{d=1}^D\max\set{\bracket{\bar{\vec{z}}_t}_d-\bracket{\bar{\boldsymbol{\lambda}}_t}_d, 0} - \sum_{t=1}^Tv_{t*}\fp{\bar{\vec{z}}_t} + f^*\fp{\bar{\boldsymbol{\lambda}}_T}.  
\end{equation}
These equations are used in the analysis of Algorithm \ref{alg:sim_update} in Section \ref{sec:analysis}.

\section{Competitive Ratio Analysis for a Primal-Dual Algorithm}
\label{sec:analysis}
In this section, we bound the competitive ratio of Algorithm \ref{alg:sim_update} called with $f_s$ in Theorem 
\ref{thm:competitive_ratio}. In order to do this, we first show that Algorithm \ref{alg:sim_update} called with $f_s$ does not make a decision which causes the objective to become negative.
\begin{lemma}
\label{lemma:objective_positive}
If $f_s$ is convex and differentiable and $f_s\fp{\vec{0}}=0$, then 
\[\sum_{t=1}^Tv_t\fp{\bar{\vec{x}}_t} - f_s\fp{\sum_{t=1}^T\bar{\vec{x}}_t}\geq 0.\]
\end{lemma}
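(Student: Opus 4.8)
The plan is to exploit the greedy (marginal) structure of Algorithm~\ref{alg:sim_update} and then telescope. First I would observe that line~3 of Algorithm~\ref{alg:sim_update} is exactly the saddle-point reformulation — obtained via the conjugacy identities $f_s = f_s^{**}$ and $v_t = v_{t**}$, which hold since $f_s$ is (closed) convex and $v_t$ is (closed) concave — of the marginal primal problem gotten from \eqref{eq:marginal_opt} by replacing $f$ with $f_s$, i.e.\ the passage from \eqref{eq:marginal_opt} to \eqref{eq:marginal_opt_dual} carried out with $f_s$ in place of $f$. Hence $\bar{\vec{x}}_t$ is a maximizer of
\[
\maximize_{\vec{0}\preceq \vec{x}_t\preceq\vec{1}}\; v_t\fp{\vec{x}_t} - f_s\fp{\textstyle\sum_{i=1}^{t-1}\bar{\vec{x}}_i+\vec{x}_t} + f_s\fp{\textstyle\sum_{i=1}^{t-1}\bar{\vec{x}}_i}.
\]
Writing $\vec{s}_t := \sum_{i=1}^{t}\bar{\vec{x}}_i$ and $\vec{s}_0 := \vec{0}$, and noting that $\vec{x}_t=\vec{0}$ is feasible above with objective value $v_t\fp{\vec{0}} = 0$ by Assumption~\ref{assump:v}(3), the optimal value — attained at $\bar{\vec{x}}_t$ — must be nonnegative, so
\[
v_t\fp{\bar{\vec{x}}_t} - f_s\fp{\vec{s}_t} + f_s\fp{\vec{s}_{t-1}} \;\geq\; 0, \qquad t=1,\dots,T.
\]

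Second, I would sum these $T$ inequalities. The $f_s$ terms form a telescoping sum equal to $f_s\fp{\vec{s}_0} - f_s\fp{\vec{s}_T} = -f_s\fp{\vec{s}_T}$, using $f_s\fp{\vec{0}}=0$. This yields $\sum_{t=1}^{T} v_t\fp{\bar{\vec{x}}_t} - f_s\fp{\vec{s}_T} \geq 0$, which is exactly the claim since $\vec{s}_T = \sum_{t=1}^{T}\bar{\vec{x}}_t$.

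The one nontrivial point — the main obstacle — is the very first step: certifying that the saddle-point computation in line~3 actually returns a maximizer of the marginal primal problem with $f_s$. This is where convexity and differentiability of $f_s$ are used: they give $f_s^{**}=f_s$, guarantee existence of the gradient $\bar{\boldsymbol{\lambda}}_t=\nabla f_s\fp{\vec{s}_t}$ appearing in \eqref{eq:primal_decision}, and (together with compactness of $[\vec{0},\vec{1}]^D$ and Assumption~\ref{assump:v}) justify interchanging the inner minimization and outer maximization in line~3, so that the $\bar{\vec{x}}_t$-component of the saddle point solves the marginal problem. Once this equivalence is recorded, the remainder is the elementary telescoping argument above; alternatively, one can sidestep the minimax exchange and verify the per-step inequality directly from the stationarity conditions in \eqref{eq:primal_decision} (with $\bar{\boldsymbol{\lambda}}_t=\nabla f_s\fp{\vec{s}_t}$, $\bar{\vec{z}}_t=\nabla v_t\fp{\bar{\vec{x}}_t}$).
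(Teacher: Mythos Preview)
Your proof is correct but takes a genuinely different route from the paper. The paper works directly with the first-order characterization of $\bar{\vec{x}}_t$: it lower-bounds $v_t(\bar{\vec{x}}_t)$ by $\nabla v_t(\bar{\vec{x}}_t)^\top\bar{\vec{x}}_t$ via concavity, telescopes $f_s$, upper-bounds each $f_s$-increment by $\nabla f_s(\sum_{i\le t}\bar{\vec{x}}_i)^\top\bar{\vec{x}}_t$ via convexity, and then invokes the indicator rule \eqref{eq:primal_decision} to certify that each resulting inner product $\langle \bar{\vec{z}}_t-\bar{\boldsymbol{\lambda}}_t,\bar{\vec{x}}_t\rangle$ is nonnegative. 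You instead go one level up: recognizing that line~3 is exactly the marginal primal \eqref{eq:marginal_opt} with $f_s$ in place of $f$, you compare the optimizer $\bar{\vec{x}}_t$ against the feasible point $\vec{x}_t=\vec{0}$ (value $0$ by Assumption~\ref{assump:v}(3)), obtaining the per-step inequality in one stroke, and then telescope. Your argument is cleaner in that the per-step bound needs neither the concavity/convexity first-order inequalities nor the explicit indicator form of $\bar{\vec{x}}_t$; the price is that all the structural hypotheses are front-loaded into the minimax-equals-marginal-primal identification, which you correctly flag as the one nontrivial step. The paper's approach, by contrast, is more self-contained once \eqref{eq:primal_decision} is recorded and makes the role of each hypothesis visible at the point of use; yours makes the ``greedy step never decreases the surrogate objective'' intuition more transparent.
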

\begin{proof}
We upper bound this expression by incorporating the decision rule of Algorithm \ref{alg:sim_update} called with $f_s$ as follows:
\begin{align*}
\sum_{t=1}^Tv_t\fp{\bar{\vec{x}}_t} - f_s\fp{\sum_{t=1}^T\bar{\vec{x}}_t}
&\overset{\text{(a)}}{\geq}\sum_{t=1}^T\nabla v_t\fp{\bar{\vec{x}}_t}^\top\vec{x}_t - f_s\fp{\sum_{t=1}^T\bar{\vec{x}}_t}
\\&\overset{\text{(b)}}{=}\sum_{t=1}^T\paren{\nabla v_t\fp{\bar{\vec{x}}_t}^\top\vec{x}_t - f_s\fp{\sum_{i=1}^t\bar{\vec{x}}_i} + f_s\fp{\sum_{i=1}^{t-1}\bar{\vec{x}}_i}} 
\\&\overset{\text{(c)}}{\geq} \sum_{t=1}^T\inprod{\nabla v_t\fp{\bar{\vec{x}}_t} - \nabla f_s\fp{\sum_{i=1}^t\bar{\vec{x}}_i}}{\bar{\vec{x}}_t}.
\end{align*}
Inequality (a) comes from concavity of $v_t$. Equality (b) comes from writing $f_s(\sum_{t=1}^T\bar{\vec{x}}_t)$ as a telescoping sum with the assumption that $f_s\fp{\vec{0}}=0$. Inequality (c) follows from convexity of $f_s$. 
Finally, the decision rule of Algorithm \ref{alg:sim_update}---i.e., $\bar{\vec{x}}_t =\1{\bar{\vec{z}}_t - \bar{\boldsymbol{\lambda}}_t \succeq \vec{0}}$---called with $f_s$ ensures that the inner product is always non-negative.
\end{proof}

Now, we bound the competitive ratio of Algorithm \ref{alg:sim_update} called with $f_s$.

\begin{theorem}
\label{thm:competitive_ratio}
Suppose that $f_s:\R^D\to \R$ satisfies Assumption \ref{assump:fs}. The competitive ratio of Algorithm \ref{alg:sim_update} (called with $f_s$) is bounded by
$1/\alpha_{f,f_s}$ where
\[\alpha_{f,f_s}:=\sup_{\vec{0}\preceq\vec{u}\preceq T\vec{1}}\frac{f^*\fp{\nabla f_s\fp{\vec{u}}}}{f_s\fp{\vec{u}} - f\fp{\vec{u}}}.\]
\end{theorem}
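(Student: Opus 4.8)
The plan is to prove the chain $\alpha_{f,f_s}\,P^s \ge D^s \ge D^\star \ge P^\star$; since $\alpha_{f,f_s}$ depends only on $f$ and $f_s$ (not on the instance $\{v_t\}_t$), dividing by $\alpha_{f,f_s}\,P^\star$ then gives $P^s/P^\star \ge 1/\alpha_{f,f_s}$ for every instance, which is exactly the claimed competitive ratio. The last two inequalities are routine: the pair $(\bar{\boldsymbol\lambda}_T,\{\bar{\vec z}_t\}_{t})$ is feasible for \eqref{eq:dual}, since $\bar{\boldsymbol\lambda}_T=\nabla f_s(\sum_i\bar{\vec x}_i)\succeq\vec 0$ by Assumption \ref{assump:fs}(2) and $\bar{\vec z}_t=\nabla v_t(\bar{\vec x}_t)\succeq\vec 0$ by Assumption \ref{assump:v}(2), so $D^s\ge D^\star$, and $D^\star\ge P^\star$ is weak duality between \eqref{eq:primal} and \eqref{eq:dual}. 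Everything therefore reduces to showing $D^s\le\alpha_{f,f_s}\,P^s$.

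For that, I would first rewrite $D^s$ in a transparent form using the algorithm's optimality conditions. Because $\bar{\vec x}_t=\1{\bar{\vec z}_t-\bar{\boldsymbol\lambda}_t\succeq\vec 0}$, each coordinate satisfies $\max\{[\bar{\vec z}_t]_d-[\bar{\boldsymbol\lambda}_t]_d,0\}=[\bar{\vec x}_t]_d([\bar{\vec z}_t]_d-[\bar{\boldsymbol\lambda}_t]_d)$, so summing over $d$ gives $\sum_d\max\{[\bar{\vec z}_t]_d-[\bar{\boldsymbol\lambda}_t]_d,0\}=\bar{\vec z}_t^\top\bar{\vec x}_t-\bar{\boldsymbol\lambda}_t^\top\bar{\vec x}_t$; and since $\bar{\vec z}_t=\nabla v_t(\bar{\vec x}_t)$, the Fenchel--Young equality \eqref{eq:fenchel-young-concave} gives $\bar{\vec z}_t^\top\bar{\vec x}_t-v_{t*}(\bar{\vec z}_t)=v_t(\bar{\vec x}_t)$. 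Substituting into \eqref{eq:Ds} yields $D^s=\sum_{t=1}^T v_t(\bar{\vec x}_t)-\sum_{t=1}^T\bar{\boldsymbol\lambda}_t^\top\bar{\vec x}_t+f^*(\bar{\boldsymbol\lambda}_T)$. Next, writing $\vec X_t:=\sum_{i=1}^t\bar{\vec x}_i$ so that $\bar{\boldsymbol\lambda}_t=\nabla f_s(\vec X_t)$ and $\bar{\vec x}_t=\vec X_t-\vec X_{t-1}$, convexity of $f_s$ gives $\bar{\boldsymbol\lambda}_t^\top\bar{\vec x}_t=\nabla f_s(\vec X_t)^\top(\vec X_t-\vec X_{t-1})\ge f_s(\vec X_t)-f_s(\vec X_{t-1})$, which telescopes (using $f_s(\vec 0)=0$) to $\sum_t\bar{\boldsymbol\lambda}_t^\top\bar{\vec x}_t\ge f_s(\vec X_T)$. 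Hence $D^s\le\sum_t v_t(\bar{\vec x}_t)-f_s(\vec X_T)+f^*(\nabla f_s(\vec X_T))$.

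To finish, note $\vec 0\preceq\vec X_T\preceq T\vec 1$, so the definition of $\alpha_{f,f_s}$ bounds $f^*(\nabla f_s(\vec X_T))\le\alpha_{f,f_s}(f_s(\vec X_T)-f(\vec X_T))$ whenever $f_s(\vec X_T)>f(\vec X_T)$ (in the degenerate case $f_s(\vec X_T)=f(\vec X_T)$, either $f^*(\nabla f_s(\vec X_T))=0$ and the term drops, or it is positive, in which case $\alpha_{f,f_s}=\infty$ and the claimed bound is vacuous). Substituting and rearranging gives $D^s\le\sum_t v_t(\bar{\vec x}_t)+(\alpha_{f,f_s}-1)f_s(\vec X_T)-\alpha_{f,f_s}\,f(\vec X_T)$. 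Now I would invoke two facts: Lemma \ref{lemma:objective_positive} gives $\sum_t v_t(\bar{\vec x}_t)\ge f_s(\vec X_T)$, and $\alpha_{f,f_s}\ge 1$ because $f^*(\nabla f_s(\vec u))\ge\nabla f_s(\vec u)^\top\vec u-f(\vec u)\ge f_s(\vec u)-f(\vec u)$ (first step by definition of the conjugate, second by convexity of $f_s$ with $f_s(\vec 0)=0$). Together these let me replace $(\alpha_{f,f_s}-1)f_s(\vec X_T)$ by the larger $(\alpha_{f,f_s}-1)\sum_t v_t(\bar{\vec x}_t)$, yielding $D^s\le\alpha_{f,f_s}\big(\sum_t v_t(\bar{\vec x}_t)-f(\vec X_T)\big)=\alpha_{f,f_s}\,P^s$, as desired.

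I expect the main obstacle to be the first reduction — collapsing the awkward expression \eqref{eq:Ds} into the clean form $\sum_t v_t(\bar{\vec x}_t)-\sum_t\bar{\boldsymbol\lambda}_t^\top\bar{\vec x}_t+f^*(\bar{\boldsymbol\lambda}_T)$ by matching the indicator form of $\bar{\vec x}_t$ against the $\max\{\cdot,0\}$ terms and applying the Fenchel--Young equalities at the right arguments. Once $D^s$ is in that form, the remaining estimates (the telescoping convexity bound, the definition of $\alpha_{f,f_s}$, and Lemma \ref{lemma:objective_positive}) are mechanical apart from the minor edge-case check on the denominator $f_s(\vec X_T)-f(\vec X_T)$.
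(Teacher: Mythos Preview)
Your argument is essentially the paper's: collapse $D^s$ via the indicator identity and Fenchel--Young, telescope using convexity of $f_s$, then combine Lemma~\ref{lemma:objective_positive} with the definition of $\alpha_{f,f_s}$. The only substantive difference is cosmetic---the paper bounds the ratio $(P^s-D^s)/P^s$ from below by $\beta_{f,f_s}=1-\alpha_{f,f_s}$, whereas you bound $D^s$ directly by $\alpha_{f,f_s}P^s$ after first establishing $\alpha_{f,f_s}\ge 1$; these are the same inequality rearranged.

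There is one genuine gap, however, in your ``routine'' step $D^s\ge D^\star$. Feasibility of $(\bar{\boldsymbol\lambda}_T,\{\bar{\vec z}_t\})$ for \eqref{eq:dual} only tells you that the dual objective \emph{evaluated at that pair} is at least $D^\star$. But $D^s$ in \eqref{eq:Ds} is not that value: the $\max$ terms use the time-varying $\bar{\boldsymbol\lambda}_t$, not $\bar{\boldsymbol\lambda}_T$. You need the additional observation (Assumption~\ref{assump:fs}(3), increasing gradient) that $\bar{\boldsymbol\lambda}_t\preceq\bar{\boldsymbol\lambda}_T$ for all $t$, which gives $\max\{[\bar{\vec z}_t]_d-[\bar{\boldsymbol\lambda}_t]_d,0\}\ge\max\{[\bar{\vec z}_t]_d-[\bar{\boldsymbol\lambda}_T]_d,0\}$ and hence $D^s\ge\big(\text{dual objective at }(\bar{\boldsymbol\lambda}_T,\{\bar{\vec z}_t\})\big)\ge D^\star$. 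This is exactly how the paper closes the argument, and without it the chain does not connect.
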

\begin{proof}
The general overview of the proof is as follows: 
writing $D^s$ \eqref{eq:Ds} in terms of $P^s$ \eqref{eq:Ps}, we bound the gap between $D^s$ and $P^s$. From here, we lower bound $D^s$ by $D^{\star}$ \eqref{eq:dual}, which in turn allows us to use weak duality to relate $D^{\star}$ and $P^{\star}$.

We start with writing $D^s$ in terms of $P^s$:
\begin{align*}
D^s &= \sum_{t=1}^T \sum_{d=1}^D\max\set{\bracket{\bar{\vec{z}}_t}_d-\bracket{\bar{\boldsymbol{\lambda}}_t}_d, 0} - \sum_{t=1}^Tv_{t*}\fp{\bar{\vec{z}}_t} + f^*\fp{\bar{\boldsymbol{\lambda}}_T}
\\&\overset{\text{(a)}}{=} \sum_{t=1}^T\paren{\bar{\vec{z}}_t-\bar{\boldsymbol{\lambda}}_t}^\top\bar{\vec{x}}_t - \sum_{t=1}^Tv_{t*}\fp{\bar{\vec{z}}_t} + f^*\fp{\bar{\boldsymbol{\lambda}}_T}
\\&\overset{\text{(b)}}{=} \sum_{t=1}^T\nabla v_t\fp{\bar{\vec{x}}_t}^\top\bar{\vec{x}}_t - \sum_{t=1}^T\nabla f_s\fp{\sum_{i=1}^t\bar{\vec{x}}_i}^\top\bar{\vec{x}}_t - \sum_{t=1}^Tv_{t*}\fp{\bar{\vec{z}}_t} + f^*\fp{\bar{\boldsymbol{\lambda}}_T}.
\end{align*}
Equality (a) comes from the decision rule of Algorithm \ref{alg:sim_update} called with $f_s$, which ensures that $\bar{\vec{x}}_t = \1{\bar{\vec{z}}_t - \bar{\boldsymbol{\lambda}}_t \succeq \vec{0}}$. Equality (b) comes from replacing $\bar{\boldsymbol{\lambda}}_t$ with $\nabla f_s\fp{\sum_{i=1}^t\bar{\vec{x}}_i}$ and $\bar{\vec{z}}_t$ with $\nabla v_t\fp{\bar{\vec{x}}_t}$.
Now, we proceed to bound the duality gap between $D^s$ and $P^s$ by first observing the following relationship:
\begin{align*}
D^s &\overset{\text{(c)}}{\leq} \sum_{t=1}^T\nabla v_t\fp{\bar{\vec{x}}_t}^\top\bar{\vec{x}}_t - f_s\fp{\sum_{t=1}^T\bar{\vec{x}}_t} - \sum_{t=1}^Tv_{t*}\fp{\bar{\vec{z}}_t} + f^*\fp{\bar{\boldsymbol{\lambda}}_T}
\\&\overset{\text{(d)}}{=} \sum_{t=1}^T\nabla v_t\fp{\bar{\vec{x}}_t}^\top\bar{\vec{x}}_t - f_s\fp{\sum_{t=1}^T\bar{\vec{x}}_t} - \sum_{t=1}^T\paren{\nabla v_t\fp{\bar{\vec{x}}_t}^\top\bar{\vec{x}}_t-v_t\fp{\bar{\vec{x}}_t}} + f^*\fp{\bar{\boldsymbol{\lambda}}_T}
\\&= \sum_{t=1}^Tv_t\fp{\bar{\vec{x}}_t} - f_s\fp{\sum_{t=1}^T\bar{\vec{x}}_t} + f^*\fp{\bar{\boldsymbol{\lambda}}_T} + f\fp{\sum_{t=1}^T\bar{\vec{x}}_t} - f\fp{\sum_{t=1}^T\bar{\vec{x}}_t}
\\&\overset{\text{(e)}}{=} P^s - f_s\fp{\sum_{t=1}^T\bar{\vec{x}}_t} + f^*\fp{\bar{\boldsymbol{\lambda}}_T} + f\fp{\sum_{t=1}^T\bar{\vec{x}}_t}.
\end{align*}
Inequality (c) follows directly from convexity of $f_s$. Equality (d) comes from the concave Fenchel-Young inequality---i.e., equation \eqref{eq:fenchel-young-concave} with $g=v_t$ and $\vec{u}=\bar{\vec{x}}_t$. Equality (e) follows by substituting the definition of $P^s = \sum_{t=1}^Tv_t(\bar{\vec{x}}_t) - f(\sum_{t=1}^T\bar{\vec{x}}_t)$ where in the preceding equality we add and subtract $f(\sum_{t=1}^T\bar{\vec{x}}_t)$. We bound the gap between $D^s$ and $P^s$
as a multiplicative factor of $P^s$ in order to relate these quantities as a ratio:
\begin{align*}
\frac{f_s\fp{\sum_{t=1}^T\bar{\vec{x}}_t} - f^*\fp{\bar{\boldsymbol{\lambda}}_T} - f\fp{\sum_{t=1}^T\bar{\vec{x}}_t}}{P^s}
&\overset{\text{(f)}}{=} \frac{f_s\fp{\sum_{t=1}^T\bar{\vec{x}}_t} - f^*\fp{\nabla f_s\fp{\sum_{t=1}^T\bar{\vec{x}}_t}} - f\fp{\sum_{t=1}^T\bar{\vec{x}}_t}}{\sum_{t=1}^Tv_t\fp{\bar{\vec{x}}_t} - f\fp{\sum_{t=1}^T\bar{\vec{x}}_t}}
\\&\overset{\text{(g)}}{\geq} \frac{f_s\fp{\sum_{t=1}^T\bar{\vec{x}}_t} - f^*\fp{\nabla f_s\fp{\sum_{t=1}^T\bar{\vec{x}}_t}} - f\fp{\sum_{t=1}^T\bar{\vec{x}}_t}}{f_s\fp{\sum_{t=1}^T\bar{\vec{x}}_t} - f\fp{\sum_{t=1}^T\bar{\vec{x}}_t}}
\\&\overset{\text{(h)}}{\geq}\inf_{\vec{0}\preceq\vec{u}\preceq T\vec{1}} \frac{f_s\fp{\vec{u}} - f^*\fp{\nabla f_s\fp{\vec{u}}} - f\fp{\vec{u}}}{f_s\fp{\vec{u}} - f\fp{\vec{u}}}
=:\beta_{f,f_s}.
\end{align*}
In equality (f), we replace $\bar{\boldsymbol{\lambda}}_T$ with $\nabla f_s(\sum_{i=1}^T\bar{\vec{x}}_i)$. Inequality (g) follows from Lemma \ref{lemma:objective_positive}, and inequality (h) follows from observing that $\vec{0}\preceq \sum_{t=1}^T\bar{\vec{x}}_t\preceq T\vec{1}$. Hence,
\[\beta_{f,f_s}P^s\leq f_s\fp{\sum_{t=1}^T\bar{\vec{x}}_t} - f^*\fp{\bar{\boldsymbol{\lambda}}_T} - f\fp{\sum_{t=1}^T\bar{\vec{x}}_t}\leq P^s-D^s.\]
Define 
\begin{equation}
\label{eq:alpha_f_fs}
\alpha_{f,f_s} :=1-\beta_{f,f_s} = \sup_{\vec{0}\preceq\vec{u}\preceq T\vec{1}} \frac{f^*\fp{\nabla f_s\fp{\vec{u}}}}{f_s\fp{\vec{u}} - f\fp{\vec{u}}}.
\end{equation}
Assumption \ref{assump:fs}(5) ensures that $\alpha_{f,f_s}\geq 0$ which, in turn,  ensures that the competitive ratio is non-negative. We now lower bound $D^s$ by $D^{\star}$ and subsequently use weak duality to get that $D^s \geq D^{\star} \geq P^{\star}$. From Assumption \ref{assump:fs}(3), we know that $\nabla f_s(\sum_{i=1}^t\bar{\vec{x}}_i) \preceq \nabla f_s(\sum_{i=1}^T\bar{\vec{x}}_i)$ for all $t\in\bracket{T}$ since $\bar{\vec{x}}_i\succeq\vec{0}$ for all $i\in\bracket{T}$. This, in turn, implies that $\bar{\boldsymbol{\lambda}}_t \preceq \bar{\boldsymbol{\lambda}}_T$ for all $t\in\bracket{T}$ so that
\begin{align*}
D^s&=\sum_{t=1}^T \sum_{d=1}^D\max\set{\bracket{\bar{\vec{z}}_t}_d-\bracket{\bar{\boldsymbol{\lambda}}_t}_d, 0} - \sum_{t=1}^Tv_{t*}\fp{\bar{\vec{z}}_t} + f^*\fp{\bar{\boldsymbol{\lambda}}_T}
\\&\geq \sum_{t=1}^T \sum_{d=1}^D\max\set{\bracket{\bar{\vec{z}}_t}_d-\bracket{\bar{\boldsymbol{\lambda}}_T}_d, 0} - \sum_{t=1}^Tv_{t*}\fp{\bar{\vec{z}}_t} + f^*\fp{\bar{\boldsymbol{\lambda}}_T} \geq D^\star.
\end{align*}
Hence, $P^s - D^\star \geq P^s\beta_{f,f_s}$, and applying weak duality, we get that $P^s - P^\star \geq P^s\beta_{f,f_s}$. Rearranging this equation gives us the following:
\[\frac{P^s}{P^\star} \geq \frac{1}{1-\beta_{f,f_s}} = \frac{1}{\alpha_{f,f_s}}.\]
This concludes the proof.
\end{proof}

This theorem allows us to write the competitive ratio as the result of an optimization problem for a large class of $f$ and $f_s$. Our objective then becomes to design $f_s$ such that $\alpha_{f,f_s}$ is as small as possible, since this would in turn yield a stronger competitive ratio bound. We can then verify the following intuition: with the goal of increasing the denominator of \eqref{eq:alpha_f_fs}, we see that we must craft $f_s$ to be sufficiently larger than $f$ in order to make cautious allocations in the face of adversarial uncertainty. However, with the goal of decreasing the numerator of \eqref{eq:alpha_f_fs}, we must not design $f_s$ to be too large, otherwise the algorithm will make little to no allocation.
In the next section, we discuss how to choose $f_s$ to optimize this ratio. 
\section{Designing the Surrogate Function}
\label{sec:surrogate}
As the analysis in the preceding section shows, the choice of the surrogate function plays a crucial role in obtaining an improved
competitive ratio bound.
In this section, we propose techniques to design $f_s$ for particular classes of functions. In particular, in Section 5.1, we propose a technique for designing the surrogate of polynomial functions and we obtain the competitive ratio bound in this setting. In Section 5.2, we exploit quasiconvex optimization to design the surrogate function for a general $f$.

\subsection{Polynomial Function}
\label{sec:polynomial_surrogate}
We propose a design technique for a special class of $f$:  polynomials that satisfy Assumption \ref{assump:f}. We let $f_s\fp{\vec{u}}=\frac{1}{\rho}f\fp{\rho\vec{u}}, \rho>1$. Note that $\nabla f_s\fp{\vec{u}} = \nabla f\fp{\rho\vec{u}}$.  Due to Assumption \ref{assump:f}(3) (that the gradient of $f$ is increasing) looking ahead when making a decision forces us to use a larger gradient and be more careful in our allocation. The intuition here is to stay cautious because we make no assumptions on the arriving input.
Now, it suffices to determine $\rho$. This surrogate function was proposed in \cite{Chan2015}, but their analysis yielded a suboptimal choice of $\rho$.

To determine our choice of $\rho$, we start with Lemmas \ref{lemma:optimal_rho} and \ref{lemma:positive_numerator}.
Using these, Theorem \ref{thm:polynomial} shows that finding the optimal $\rho$ for a general class of polynomial functions comes back to solving the optimization problem in Lemma \ref{lemma:optimal_rho}.

\begin{lemma}
\label{lemma:optimal_rho}
For $\tau \geq 2$, the solution to
\[\argmin_{\rho}\paren{\tau-1}\frac{\rho^{\tau}}{\rho^{\tau-1}-1}\]
is $\rho^\star = \tau^{1/\paren{\tau-1}}$.
\end{lemma}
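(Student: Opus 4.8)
The plan is to treat this as a one–variable calculus minimization on the natural domain $\rho>1$, which is where the surrogate $f_s(\vec u)=\tfrac1\rho f(\rho\vec u)$ is defined and where the denominator $\rho^{\tau-1}-1$ is positive (so the objective is finite and positive). Since $\tau-1>0$ is a fixed constant, it suffices to minimize $h(\rho):=\rho^{\tau}/(\rho^{\tau-1}-1)$ over $\rho>1$.

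First I would record the boundary behavior to guarantee an interior minimizer: as $\rho\to 1^{+}$ the denominator tends to $0^{+}$ while the numerator tends to $1$, so $h(\rho)\to+\infty$; and as $\rho\to\infty$ we have $h(\rho)\sim\rho\to+\infty$. Hence $h$ attains a global minimum at an interior critical point, and it only remains to find the critical points.

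Next I would differentiate using the quotient rule. The numerator of $h'(\rho)$ is
\[
\tau\rho^{\tau-1}\left(\rho^{\tau-1}-1\right)-(\tau-1)\rho^{\tau-2}\cdot\rho^{\tau}
=\tau\rho^{2\tau-2}-\tau\rho^{\tau-1}-(\tau-1)\rho^{2\tau-2}
=\rho^{\tau-1}\left(\rho^{\tau-1}-\tau\right),
\]
so that $h'(\rho)=\rho^{\tau-1}\left(\rho^{\tau-1}-\tau\right)/(\rho^{\tau-1}-1)^{2}$. On $\rho>1$ the prefactor $\rho^{\tau-1}/(\rho^{\tau-1}-1)^{2}$ is strictly positive, so the sign of $h'(\rho)$ coincides with the sign of $\rho^{\tau-1}-\tau$. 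Its unique zero on $(1,\infty)$ is $\rho^{\tau-1}=\tau$, i.e.\ $\rho^{\star}=\tau^{1/(\tau-1)}$, which indeed satisfies $\rho^{\star}>1$ because $\tau\geq 2>1$.

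Finally I would conclude from the sign pattern: $h'<0$ on $(1,\rho^{\star})$ and $h'>0$ on $(\rho^{\star},\infty)$, so $h$ is strictly decreasing then strictly increasing, making $\rho^{\star}=\tau^{1/(\tau-1)}$ the unique global minimizer. (As a by-product one gets the minimum value $(\tau-1)\,h(\rho^{\star})=(\tau-1)\cdot\tau^{\tau/(\tau-1)}/(\tau-1)=\tau^{\tau/(\tau-1)}$, which is the quantity that will feed into Theorem~\ref{thm:polynomial}.) There is no genuine obstacle here beyond the algebraic bookkeeping in the derivative; the only point deserving care is justifying that the interior critical point is a \emph{global} minimum rather than merely stationary, and that is handled cleanly by the boundary-behavior observation together with the monotone sign of $\rho^{\tau-1}-\tau$.
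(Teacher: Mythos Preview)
Your argument is correct and follows the same core route as the paper: drop the constant $(\tau-1)$, compute the first-order condition for $h(\rho)=\rho^{\tau}/(\rho^{\tau-1}-1)$ on $\rho>1$, and identify $\rho^{\star}=\tau^{1/(\tau-1)}$ as the unique critical point. The only difference is in how globality of the minimum is certified. The paper computes the second derivative explicitly and verifies $h''(\rho)\geq 0$ on $\rho>1$ for $\tau\geq 2$, i.e.\ establishes convexity of $h$. You instead use the boundary behavior $h(\rho)\to+\infty$ at both ends together with the sign pattern of $h'(\rho)$, which is determined by $\rho^{\tau-1}-\tau$. Your route is a bit lighter on computation (no second derivative needed) and gives the same conclusion; the paper's route has the minor bonus of recording convexity as a structural fact. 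Either way the result and the key algebraic step---the factorization of the numerator of $h'$ as $\rho^{\tau-1}(\rho^{\tau-1}-\tau)$---are identical.
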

The proof is provided in Appendix \ref{sec:proof_of_optimal_rho}.

\begin{lemma}
\label{lemma:positive_numerator}
Given $\rho > 1$, for any $0\leq a \leq b$,
\[b\rho^{b-a}-a\frac{\rho^b-1}{\rho^a-1} \geq 0.\]
\end{lemma}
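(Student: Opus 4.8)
The plan is to clear the positive denominator $\rho^a-1$ and reduce the statement to a one-variable monotonicity fact in $\rho$. First I would dispose of the degenerate cases: if $b=0$ then $a=0$ and the inequality is trivial; if $a=0$ with $b>0$, the term $a\frac{\rho^b-1}{\rho^a-1}$ is read as its limit $\lim_{a\to 0^+}a\frac{\rho^b-1}{\rho^a-1}=\frac{\rho^b-1}{\ln\rho}$, and the claim then reduces to the elementary bound $1-x\le e^{-x}$ with $x=b\ln\rho$ (equivalently, it follows by continuity from the case $a>0$). So I would assume $0<a\le b$ and $\rho>1$, whence $\rho^a-1>0$. Multiplying the claimed inequality through by $\rho^a-1$ and expanding, it becomes equivalent to
\[
g(\rho):=(b-a)\rho^b-b\,\rho^{b-a}+a\ \ge\ 0 .
\]

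Next I would check the endpoint value $g(1)=(b-a)-b+a=0$ and differentiate in $\rho$:
\[
g'(\rho)=b(b-a)\rho^{b-1}-b(b-a)\rho^{b-a-1}=b(b-a)\,\rho^{b-a-1}\,(\rho^a-1).
\]
For $\rho\ge 1$ every factor is nonnegative --- $b>0$, $b-a\ge 0$, $\rho^{b-a-1}>0$, and $\rho^a\ge 1$ --- so $g'\ge 0$ on $[1,\infty)$. Hence $g$ is nondecreasing there, and $g(\rho)\ge g(1)=0$ for all $\rho\ge 1$, which is exactly the desired inequality.

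The computation is elementary once the rearrangement to $g$ is made, so I do not anticipate a genuine obstacle. The only points requiring a little care are the division-by-zero case $a=0$, handled by the limiting remark above, and the observation that the hypothesis $b\ge a\ge 0$ is precisely what forces the coefficient $b(b-a)$ appearing in $g'$ to be nonnegative, which is what lets the single derivative-sign computation close the argument.
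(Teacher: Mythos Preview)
Your proof is correct, but it takes a genuinely different route from the paper's. The paper rearranges the inequality into the form
\[
b\,\frac{\rho^b}{\rho^b-1}\ \ge\ a\,\frac{\rho^a}{\rho^a-1},
\]
and then proves it by showing that the single-variable function $f(x)=\dfrac{x}{1-e^{-\alpha x}}$ with $\alpha=\log\rho$ is monotone increasing in $x>0$, via the derivative computation $f'(x)=\dfrac{e^{-\alpha x}(e^{\alpha x}-1-\alpha x)}{(1-e^{-\alpha x})^2}$ and the elementary bound $e^{\alpha x}\ge 1+\alpha x$. In other words, the paper fixes $\rho$ and varies the exponent; you fix the exponents $a,b$ and vary $\rho$, reducing the claim to $g(\rho)=(b-a)\rho^b-b\rho^{b-a}+a\ge 0$ with $g(1)=0$ and $g'(\rho)=b(b-a)\rho^{b-a-1}(\rho^a-1)\ge 0$. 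Your argument is arguably the more elementary of the two, as it avoids the exponential substitution and the inequality $e^x\ge 1+x$, needing only a single polynomial derivative whose sign is transparent from the hypotheses; the paper's version, on the other hand, isolates a monotone function $f$ that may carry more conceptual content if one cares about how the bound behaves as the exponent moves. Your treatment of the degenerate case $a=0$ is also more explicit than the paper's, which simply restricts attention to $x>0$.
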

The proof is provided in Appendix \ref{sec:proof_of_positive_numerator}.

\begin{theorem}
\label{thm:polynomial}
Suppose that $\vec{u}\in \mathbb{R}^D$. 
For any $K\in \N$, suppose $f_K\fp{\vec{u}}=\sum_{k=1}^Kc_kg_k\fp{\vec{u}}$ is a convex function such that $c_k>0$ for each $k\in\bracket{K}$ and $g_k\fp{\vec{u}}=\prod_{i=1}^Du_i^{\tau_{ki}}$ where $\sum_{i=1}^D\tau_{ki}=\tau_k$, and $\tau_{ki}\in\R_+$ for all pairs $(k,i)\in \bracket{K}\times\bracket{D}$. Assume that $\tau:=\tau_{i^{\star}}\geq 2$ where $i^{\star}=\argmax_i \tau_i$. Then, choosing parameter $\rho$ as $\rho = \tau^{1/\paren{\tau-1}}$ guarantees a competitive ratio of at least 
$\tau^{-\tau/\paren{\tau-1}}$
for Algorithm \ref{alg:sim_update} called with $\frac{1}{\rho}f_K\fp{\rho\vec{u}}$. 
\end{theorem}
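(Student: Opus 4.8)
The plan is to apply Theorem~\ref{thm:competitive_ratio}, which tells us that the competitive ratio of Algorithm~\ref{alg:sim_update} called with $f_s(\vec{u}) = \frac{1}{\rho}f_K(\rho\vec{u})$ is at least $1/\alpha_{f_K,f_s}$, where
\[
\alpha_{f_K,f_s} = \sup_{\vec{0}\preceq\vec{u}\preceq T\vec{1}} \frac{f_K^*\fp{\nabla f_s\fp{\vec{u}}}}{f_s\fp{\vec{u}} - f_K\fp{\vec{u}}}.
\]
So the goal is to show $\alpha_{f_K,f_s} \le \tau^{\tau/(\tau-1)}$ when $\rho = \tau^{1/(\tau-1)}$, and more generally to characterize the best $\rho$. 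First I would verify that $f_s$ as defined satisfies Assumption~\ref{assump:fs}: properties (1)--(4) are inherited from $f_K$ by the scaling, and (5), namely $\frac{1}{\rho}f_K(\rho\vec{u})\ge f_K(\vec{u})$ for $\rho>1$, follows since each monomial $g_k$ is homogeneous of degree $\tau_k\ge 1$ (convexity of $f_K$ with $f_K(\vec 0)=0$ forces $\tau_k \ge 1$), so $\frac1\rho g_k(\rho\vec u) = \rho^{\tau_k-1}g_k(\vec u)\ge g_k(\vec u)$.

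The core computation is to evaluate $f_K^*(\nabla f_s(\vec{u}))$. The key identity is the Fenchel--Young equality \eqref{eq:fenchel-young}: for any convex differentiable $f$ with $f(\vec 0)=0$, $f^*(\nabla f(\vec w)) = \vec w^\top \nabla f(\vec w) - f(\vec w)$. Applying this with the substitution $\vec w = \rho\vec u$ (since $\nabla f_s(\vec u) = \nabla f_K(\rho\vec u)$), we get $f_K^*(\nabla f_s(\vec u)) = \rho\vec u^\top\nabla f_K(\rho\vec u) - f_K(\rho\vec u)$. Now I would use Euler's identity for homogeneous functions: $\vec w^\top \nabla g_k(\vec w) = \tau_k g_k(\vec w)$, so $\vec u^\top \nabla f_K(\rho\vec u) = \sum_k c_k \tau_k \rho^{\tau_k - 1} g_k(\vec u)$ (after pulling out the scaling $g_k(\rho\vec u) = \rho^{\tau_k}g_k(\vec u)$ and differentiating carefully; concretely $\nabla g_k(\rho\vec u)$ scales as $\rho^{\tau_k-1}$). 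Putting the pieces together, writing $G_k := c_k g_k(\vec u)\ge 0$,
\[
f_K^*(\nabla f_s(\vec u)) = \sum_{k=1}^K G_k\paren{\tau_k\rho^{\tau_k} - \rho^{\tau_k}} = \sum_{k=1}^K G_k(\tau_k-1)\rho^{\tau_k},
\]
while $f_s(\vec u) - f_K(\vec u) = \sum_{k=1}^K G_k(\rho^{\tau_k-1}-1)$. Therefore
\[
\alpha_{f_K,f_s} = \sup_{\vec u}\frac{\sum_k G_k(\tau_k-1)\rho^{\tau_k}}{\sum_k G_k(\rho^{\tau_k-1}-1)}.
\]

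This is a ratio of nonnegative combinations of the quantities $h_k := (\tau_k-1)\rho^{\tau_k}$ (numerator coefficients) and $d_k := \rho^{\tau_k-1}-1>0$ (denominator coefficients), with free nonnegative weights $G_k$; such a ratio is maximized at a vertex, i.e.\ by concentrating all weight on the single index $k$ maximizing $h_k/d_k = (\tau_k-1)\rho^{\tau_k}/(\rho^{\tau_k-1}-1)$. (I should check this supremum is attained with $\vec 0\preceq\vec u\preceq T\vec 1$ — it is, since each monomial $g_k$ can be made to carry essentially all the mass by choosing $\vec u$ near an appropriate corner/scaling within the box, and the ratio is scale-free in the sense that only the relative sizes of the $G_k$ matter; one may take a limiting argument if a clean maximizer does not sit inside the box.) So $\alpha_{f_K,f_s} = \max_{k\in[K]}(\tau_k-1)\rho^{\tau_k}/(\rho^{\tau_k-1}-1)$. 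Now I would argue the maximizing index is $i^\star = \argmax_i\tau_i$, i.e.\ the highest-degree term dominates for $\rho>1$ — this needs a monotonicity check: for fixed $\rho>1$, $t\mapsto (t-1)\rho^t/(\rho^{t-1}-1)$ is increasing in $t$ (at least on $t\ge 2$), which follows by differentiating in $t$ or by an elementary inequality, and is exactly the content one needs from Lemma~\ref{lemma:positive_numerator} when comparing two terms. Hence $\alpha_{f_K,f_s} = (\tau-1)\rho^{\tau}/(\rho^{\tau-1}-1)$ with $\tau = \tau_{i^\star}\ge 2$, and by Lemma~\ref{lemma:optimal_rho} this is minimized at $\rho^\star = \tau^{1/(\tau-1)}$, where plugging in gives $\alpha = (\tau-1)\tau^{\tau/(\tau-1)}/(\tau^{(\tau-1)/(\tau-1)\cdot ? } \dots) = \tau^{\tau/(\tau-1)}$ after simplification (note $\rho^{\tau-1} = \tau$, so the denominator is $\tau - 1$, canceling the $(\tau-1)$ in the numerator, leaving $\rho^\tau = \tau^{\tau/(\tau-1)}$). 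Thus the competitive ratio is at least $1/\alpha = \tau^{-\tau/(\tau-1)}$, as claimed.

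The main obstacle I anticipate is the vertex/extremal argument for the ratio $\alpha_{f_K,f_s}$: one must be careful that the $\sup$ over the box $\vec 0\preceq\vec u\preceq T\vec 1$ genuinely equals $\max_k h_k/d_k$, since the monomials $g_k$ are not independent coordinates and their ranges over the box are coupled. The clean way is to note that for any fixed $\vec u$, the ratio is a weighted average argument: $\frac{\sum_k G_k h_k}{\sum_k G_k d_k} \le \max_k \frac{h_k}{d_k}$ holds pointwise for \emph{any} nonnegative $G_k$ not all zero (this is the mediant inequality), giving the $\le$ direction immediately; for the $\ge$ direction (tightness), one exhibits $\vec u$ making $G_{i^\star}$ dominate, e.g.\ by scaling $\vec u = \epsilon\vec 1$ and letting the degree-$\tau$ term's relative weight be controlled — but since lower-degree terms vanish \emph{faster} relative to the top term only if $\tau$ is the max, a direct substitution $\vec u = s\vec 1$ and $s\to 0$ or $s\to$ (box corner) combined with the monotonicity in $t$ settles it; alternatively, tightness of the bound is not even needed for the theorem's ``at least'' conclusion, so the $\le$ direction suffices. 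The secondary technical point is handling the multivariate monomial structure $g_k(\vec u) = \prod_i u_i^{\tau_{ki}}$ and confirming Euler's identity and the gradient-scaling $\nabla g_k(\rho\vec u) = \rho^{\tau_k - 1}\nabla g_k(\vec u)$ hold coordinatewise — routine but worth stating cleanly.
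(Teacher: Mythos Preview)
Your proposal is correct and reaches the same conclusion as the paper, but the central reduction step is organized differently. Both arguments start by invoking Theorem~\ref{thm:competitive_ratio}, use the Fenchel--Young equality $f_K^*(\nabla f_K(\rho\vec u)) = (\rho\vec u)^\top\nabla f_K(\rho\vec u) - f_K(\rho\vec u)$ together with the homogeneity $g_k(\rho\vec u)=\rho^{\tau_k}g_k(\vec u)$ to reduce the ratio to
\[
\frac{\sum_k G_k(\tau_k-1)\rho^{\tau_k}}{\sum_k G_k(\rho^{\tau_k-1}-1)},
\]
and then finish with Lemma~\ref{lemma:optimal_rho}. The paper bounds this ratio by induction on $K$: ordering $\tau_1\ge\cdots\ge\tau_K$, it peels off the lowest-degree term $c_Kg_K$ at each step, using Lemma~\ref{lemma:positive_numerator} to show that removing this term can only increase the ratio, so by induction one is left with the single-term expression $(\tau_1-1)\rho^{\tau_1}/(\rho^{\tau_1-1}-1)$. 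You instead apply the mediant inequality in one shot---$\sum_k G_k h_k / \sum_k G_k d_k \le \max_k h_k/d_k$ for nonnegative weights---and then argue that $t\mapsto (t-1)\rho^t/(\rho^{t-1}-1)$ is increasing, which (as you correctly note) is exactly the content of Lemma~\ref{lemma:positive_numerator} after the substitution $a=s-1$, $b=t-1$. Your route is shorter and avoids the inductive bookkeeping; the paper's induction is really doing the mediant bound one term at a time. You also explicitly check Assumption~\ref{assump:fs}(5) and observe that only the upper bound (not tightness) on $\alpha_{f_K,f_s}$ is needed for the ``at least'' conclusion, both of which the paper leaves implicit. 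One minor point to tidy: the mediant inequality needs $d_k=\rho^{\tau_k-1}-1>0$, i.e.\ $\tau_k>1$; terms with $\tau_k=1$ have $h_k=d_k=0$ and drop out harmlessly, but you should state the standing assumption $\tau_k\ge 1$ (which, as you note, is forced by Assumption~\ref{assump:fs}(5)) to rule out $\tau_k<1$.
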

\begin{proof}
We first use induction to show that
\[\inf_{\rho>1}\sup_{\vec{0}\preceq\vec{u}\preceq T\vec{1}} \frac{f_K^*\fp{\nabla_{\rho\vec{u}} f_K\fp{\rho\vec{u}}}}{\frac{1}{\rho}f_K\fp{\rho\vec{u}}-f_K\fp{\vec{u}}}\leq \inf_{\rho>1} \paren{\tau-1}\frac{\rho^{\tau}}{\rho^{\tau-1}-1},\]
and then apply Lemma \ref{lemma:optimal_rho} to the optimization problem.
First note that for any $K\in \mathbb{N}$, the Fenchel-Young inequality holds with equality as described in Equation \eqref{eq:fenchel-young} in Section \ref{sec:preliminaries}. 
That is, \[f_K^*\fp{\nabla_{\rho\vec{u}} f_K\fp{\rho\vec{u}}} = \inprod{\nabla_{\rho\vec{u}}f_{K}\fp{\rho\vec{u}}}{\rho\vec{u}}-f_{K}\fp{\rho\vec{u}}.\] We now begin the inductive argument on $K$. For $K=1$, 
\[f_1\fp{\vec{u}} = c_1\prod_{i=1}^Du_i^{\tau_{1,i}},\]
where $\sum_{i=1}^D\tau_{1,i}=\tau_{1}\geq 2$ and $\tau_{1,i}$ is non-negative for all $i$. 

Using the definition of $f_1$, we have
\begin{align*}
\inf_{\rho>1}\sup_{\vec{0}\preceq\vec{u}\preceq T\vec{1}} \frac{f_1^*\fp{\nabla_{\rho\vec{u}} f_1\fp{\rho\vec{u}}}}{\frac{1}{\rho}f_1\fp{\rho\vec{u}}-f_1\fp{\vec{u}}}
&\overset{\text{(a)}}{=} \inf_{\rho>1}\sup_{\vec{0}\preceq\vec{u}\preceq T\vec{1}} \frac{\inprod{\nabla_{\rho\vec{u}}f_1\fp{\rho\vec{u}}}{\rho\vec{u}}-f_1\fp{\rho\vec{u}}}{\frac{1}{\rho}f_1\fp{\rho\vec{u}}-f_1\fp{\vec{u}}}\\
&\overset{\text{(b)}}{=} \inf_{\rho>1}\sup_{\vec{0}\preceq\vec{u}\preceq T\vec{1}} \frac{\rho^{\tau_1}\paren{\tau_1-1}f_1\fp{\vec{u}}}{\paren{\rho^{\tau_1-1}-1}f_1\fp{\vec{u}}}\\
&\overset{\text{(c)}}{=} \inf_{\rho>1} \paren{\tau_1-1}\frac{\rho^{\tau_1}}{\rho^{\tau_1-1}-1}.
\end{align*}
Equality (a) comes from the Fenchel-Young inequality holding with equality. Equality (b) comes from the following:
\begin{align*}
f_1\fp{\rho\vec{u}}&=\rho^{\tau_1}f_1\fp{\vec{u}},\\ \inprod{\nabla_{\rho\vec{u}}f_1\fp{\rho\vec{u}}}{\rho\vec{u}}&=\rho^{\tau_1}\tau_1f_1\fp{\vec{u}}.
\end{align*}
Equality (c) comes from removing $f_1\fp{\vec{u}}$ from the numerator and denominator, thus eliminating any dependence of $\vec{u}$ in the optimization problem. This concludes the proof for $K=1$.

Suppose that the result holds for $K-1\in \mathbb{N}$. We argue the result for $K\in \mathbb{N}$.
For notational convenience, we define
\begin{align*}
a_k&:=c_k\rho^{\tau_k-\tau_K}\paren{\tau_k-1},\\
b_k&:=c_k\paren{\tau_K-1}\paren{\frac{\rho^{\tau_k-1}-1}{\rho^{\tau_K-1}-1}},\\
h_k\fp{\vec{u}}&:=\paren{g_k\fp{\vec{u}}}^{-1}.
\end{align*}
Without loss of generality, let $\tau_1\geq \cdots\geq \tau_K$ where $\tau_1\geq 2$. Our strategy is to show that removing $c_Kg_K\fp{\vec{u}}$ upper bounds the optimization problem. We first isolate the the $c_Kg_K\fp{\vec{u}}$ term. Then, we show that keeping this term reduces the objective. We then finish the claim with the inductive hypothesis.

We begin with the following:
\begin{align*}
\frac{\inprod{\nabla_{\rho\vec{u}}f_{K}\fp{\rho\vec{u}}}{\rho\vec{u}}-f_{K}\fp{\rho\vec{u}}}{\frac{1}{\rho}f_{K}\fp{\rho\vec{u}}-f_{K}\fp{\vec{u}}}
&\overset{\text{(d)}}{=}\frac{\sum_{k=1}^{K}c_k\rho^{\tau_k}\paren{\tau_k-1}g_k\fp{\vec{u}}}{\sum_{k=1}^{K}\paren{\rho^{\tau_k-1}-1}g_k\fp{\vec{u}}}
\\&=\frac{\sum_{k=1}^{K-1}c_k\rho^{\tau_k}\paren{\tau_k-1}g_k\fp{\vec{u}} + c_K\rho^{\tau_{K}}\paren{\tau_{K}-1}g_K\fp{\vec{u}}}{\sum_{k=1}^{K-1}c_k\paren{\rho^{\tau_k-1}-1}g_k\fp{\vec{u}} + c_K\paren{\rho^{\tau_{K}-1}-1}g_K\fp{\vec{u}}}
\\&\overset{\text{(e)}}{=}\frac{\rho^{\tau_{K}}}{\rho^{\tau_{K}-1}-1}\paren{\frac{h_K\fp{\vec{u}}\paren{\sum_{k=1}^{K-1}a_kg_k\fp{\vec{u}}} + c_{K}\paren{\tau_{K}-1}}{h_K\fp{\vec{u}}\paren{\sum_{k=1}^{K-1}c_k\paren{\frac{\rho^{\tau_k-1}-1}{\rho^{\tau_{K}-1}-1}}g_k\fp{\vec{u}}} + c_{K}}}
\\&\overset{\text{(f)}}{=}\frac{\rho^{\tau_{K}}}{\rho^{\tau_{K}-1}-1}\paren{\paren{\tau_{K}-1}+ \frac{h_K\fp{\vec{u}}\sum_{k=1}^{K-1}\paren{a_k-b_k}g_k\fp{\vec{u}}}{h_K\fp{\vec{u}}\paren{\sum_{k=1}^{K-1}c_k\paren{\frac{\rho^{\tau_k-1}-1}{\rho^{\tau_{K}-1}-1}}g_k\fp{\vec{u}}} + c_{K}}}.
\end{align*}
Equality (d) comes from the following:
\begin{align*}
f_K\fp{\rho\vec{u}}&=\sum_{k=1}^Kc_k\rho^{\tau_k}g_k\fp{\vec{u}},\\ \inprod{\nabla_{\rho\vec{u}}f_K\fp{\rho\vec{u}}}{\rho\vec{u}}&=\sum_{k=1}^K\rho^{\tau_k}\tau_kg_k\fp{\vec{u}}.
\end{align*}
Equality (e) comes from factoring out $\rho^{\tau_K}g_K\fp{\vec{u}}$ from the numerator and $\paren{\rho^{\tau_K-1}-1}g_K\fp{\vec{u}}$ from the denominator. Equality (f) comes from rearranging the fraction inside the parentheses by bringing $\paren{\tau_K-1}$ out front.

We have successfully isolated $c_K$ in the denominator of the fraction. Since $c_{K}> 0$, adding it to the denominator shrinks the fraction inside the parentheses since the numerator is positive, which we know from Lemma \ref{lemma:positive_numerator}. Indeed, Lemma \ref{lemma:positive_numerator} shows that for all $k\in\bracket{K}$,
\[a_k-b_k=c_k\paren{\rho^{\tau_k-\tau_{K}}\paren{\tau_k-1}-\paren{\tau_{K}-1}\paren{\frac{\rho^{\tau_k-1}-1}{\rho^{\tau_{K}-1}-1}}} \geq 0.\]
Now, we have
\begin{align*}
\frac{\inprod{\nabla_{\rho\vec{u}}f_{K}\fp{\rho\vec{u}}}{\rho\vec{u}}-f_{K}\fp{\rho\vec{u}}}{\frac{1}{\rho}f_{K}\fp{\rho\vec{u}}-f_{K}\fp{\vec{u}}}
&\overset{\text{(g)}}{\leq}\frac{\rho^{\tau_{K}}}{\rho^{\tau_{K}-1}-1}\paren{\paren{\tau_{K}-1}+ \frac{h_K\fp{\vec{u}}\sum_{k=1}^{K-1}\paren{a_k-b_k}g_k\fp{\vec{u}}}{h_K\fp{\vec{u}}\sum_{k=1}^{K-1}c_k\paren{\frac{\rho^{\tau_k-1}-1}{\rho^{\tau_{K}-1}-1}}g_k\fp{\vec{u}}}}
\\&\overset{\text{(h)}}{=}\frac{\rho^{\tau_{K}}}{\rho^{\tau_{K}-1}-1}\paren{\paren{\tau_{K}-1}+ \frac{\sum_{k=1}^{K-1}\paren{a_k-b_k}g_k\fp{\vec{u}}}{\sum_{k=1}^{K-1}c_k\paren{\frac{\rho^{\tau_k-1}-1}{\rho^{\tau_{K}-1}-1}}g_k\fp{\vec{u}}}}
\\&\overset{\text{(i)}}{=}\frac{\sum_{k=1}^{K-1}\rho^{\tau_k}\paren{\tau_k-1}g_k\fp{\vec{u}}}{\sum_{k=1}^{K-1}\paren{\rho^{\tau_k-1}-1}g_k\fp{\vec{u}}}
\\&=\frac{\inprod{\nabla_{\rho\vec{u}}f_{K-1}\fp{\rho\vec{u}}}{\rho\vec{u}}-f_{K-1}\fp{\rho\vec{u}}}{\frac{1}{\rho}f_{K-1}\fp{\rho\vec{u}}-f_{K-1}\fp{\vec{u}}}.
\end{align*}
Inequality (g) comes from removing $c_{K}$ from the denominator. Equality (h) comes from removing $h_K\fp{\vec{u}}$ from the numerator and denominator of the fraction inside the parentheses. Equality (i) comes from combining the expression back into a single fraction. We now finish the claim with the following:
\begin{align*}
\inf_{\rho>1}\sup_{\vec{0}\preceq\vec{u}\preceq T\vec{1}} \frac{f_{K}^*\fp{\nabla_{\rho\vec{u}} f_{K}\fp{\rho\vec{u}}}}{\frac{1}{\rho}f_{K}\fp{\rho\vec{u}}-f_{K}\fp{\vec{u}}}
&\overset{\text{(j)}}{=}\inf_{\rho>1}\sup_{\vec{0}\preceq\vec{u}\preceq T\vec{1}}\frac{\inprod{\nabla_{\rho\vec{u}}f_{K}\fp{\rho\vec{u}}}{\rho\vec{u}}-f_{K}\fp{\rho\vec{u}}}{\frac{1}{\rho}f_{K}\fp{\rho\vec{u}}-f_{K}\fp{\vec{u}}}
\\&\leq \inf_{\rho>1}\sup_{\vec{0}\preceq\vec{u}\preceq T\vec{1}}\frac{\inprod{\nabla_{\rho\vec{u}}f_{K-1}\fp{\rho\vec{u}}}{\rho\vec{u}}-f_{K-1}\fp{\rho\vec{u}}}{\frac{1}{\rho}f_{K-1}\fp{\rho\vec{u}}-f_{K-1}\fp{\vec{u}}}
\\&\overset{\text{(k)}}{=}\inf_{\rho>1}\sup_{\vec{0}\preceq\vec{u}\preceq T\vec{1}}\frac{f_{K-1}^*\fp{\nabla_{\rho\vec{u}} f_{K-1}\fp{\rho\vec{u}}}}{\frac{1}{\rho}f_{K-1}\fp{\rho\vec{u}}-f_{K-1}\fp{\vec{u}}}
\\&\overset{\text{(l)}}{\leq}\inf_{\rho>1}\paren{\tau_1-1}\frac{\rho^{\tau_1}}{\rho^{\tau_1-1}-1}.
\end{align*}
Equality (j) and equality (k) come from the Fenchel-Young inequality, which holds at equality. Inequality (l) comes from the inductive hypothesis. 

We now apply Lemma \ref{lemma:optimal_rho} to solve
\[\argmin_{\rho>1}\paren{\tau_1-1}\frac{\rho^{\tau_1}}{\rho^{\tau_1-1}-1}=\tau_1^{1/\paren{\tau_1-1}}.\]
Plugging this choice of $\rho$ back into the objective gives us $\tau_1^{\tau_1/\paren{\tau_1-1}}$ which concludes the proof.
\end{proof}

\paragraph{Comparison to \cite{Chan2015}.} 
Chan et al. \cite{Chan2015} approach a similar optimization problem, but exploit their additional assumptions on the procurement cost function which essentially restricts their class to polynomials. They choose their design parameter to be $\rho = \lambda^{1/\paren{\lambda-1}}$ where $\lambda$ is defined as the smallest cumulative degree of a term in $f$; i.e., $\lambda:=\tau_{j^{\star}}$ where $j^{\star}=\argmin_j\tau_j$. Chan et al. \cite{Chan2015} are interested in the asymptotic behavior of the competitive ratio in terms of $\tau$, and both their choice of $\rho$ and our choice of $\rho$ give the same $\bigO{\tau}$ competitive ratio bound\footnote{In their work, \cite{Chan2015} define the competitive ratio to be the inverse of ours, and so to avoid confusion in case the reader refers to their work, we compare their result with ours according to their definition of competitive ratio.}. However, we achieve a more refined competitive ratio bound with our choice of $\rho=\tau^{1/\paren{\tau-1}}$.

\subsection{General Case}
\label{sec:surrogate_general}
In this section, we propose a design approach for a general procurement cost function. We show that the algorithm metric we aim to optimize is a quasiconvex function of $f_s$, the surrogate function we are aiming to design. Therefore, the search over an appropriate family of $f_s$ can be carried out by quasiconvex optimization. 
Note that while the approach is general, solving the problem computationally requires discretizing the variable $\vec{u}\in\R_+^D$, and thus this method is suitable for cases where $D$ is small. 

\begin{theorem}
\label{thm:quasiconvex}
Let $f\fp{\vec{u}}=\sum_{n=1}^Ng_n\fp{\vec{u}}$ where $g_n$ satisfies Assumption \ref{assump:f}
for all $n\in\bracket{N}$. Let $\vec{a}\in\R^N$, where $\vec{a}\succeq\vec{1}$, and $f_s\fp{\vec{u}}=\sum_{n=1}^Na_ng_n\fp{\vec{u}}$. Consider a discretization of the set $\setcond{\vec{u}}{\vec{0}\preceq\vec{u}\preceq T\vec{1}}$ and denote the points in this discretized set as $\U$. The following problem
\begin{equation}
\label{eq:quasiconvex} 
\minimize_{\vec{a}\succeq\vec{1}}~\max_{\vec{u}\in\U}~\frac{f^*\fp{\nabla f_s\fp{\vec{u}}}}{f_s\fp{\vec{u}}-f\fp{\vec{u}}}
\tag{Q-1}
\end{equation}
can be solved as a quasiconvex optimization problem.
\end{theorem}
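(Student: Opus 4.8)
The plan is to show that the objective in \eqref{eq:quasiconvex} is a quasiconvex function of the design vector $\vec{a}$, so that standard tools (e.g. bisection on the optimal value combined with a convex feasibility problem at each step) apply. Fix a discretization point $\vec{u}\in\U$ and examine the ratio
\[
r_{\vec{u}}\fp{\vec{a}} := \frac{f^*\fp{\nabla f_s\fp{\vec{u}}}}{f_s\fp{\vec{u}}-f\fp{\vec{u}}}.
\]
First I would rewrite the denominator: since $f_s\fp{\vec{u}}=\sum_{n=1}^N a_n g_n\fp{\vec{u}}$ and $f\fp{\vec{u}}=\sum_{n=1}^N g_n\fp{\vec{u}}$, we have $f_s\fp{\vec{u}}-f\fp{\vec{u}}=\sum_{n=1}^N\paren{a_n-1}g_n\fp{\vec{u}}$, which is a nonnegative affine (hence concave) function of $\vec{a}$ on the feasible set $\vec{a}\succeq\vec{1}$, and it is strictly positive as long as at least one $a_n>1$ at a point where $g_n\fp{\vec{u}}>0$ (the boundary case where the denominator vanishes can be handled by noting the ratio is then $+\infty$ unless the numerator also vanishes, and in any case $\vec{a}=\vec{1}$ is not where the infimum is attained when $f^*\fp{\nabla f}>0$; alternatively restrict to $\vec{a}\succeq\vec{1}+\epsilon\vec{1}$).

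Next I would handle the numerator. Using the Fenchel--Young equality \eqref{eq:fenchel-young} with the convex function $f$ evaluated at the point $\nabla f_s\fp{\vec{u}}$ is not immediate because that is $f^*$ at a point, not $f$; instead I would use the identity $f^*\fp{\nabla f_s\fp{\vec{u}}}=\sup_{\vec{w}}\inprod{\nabla f_s\fp{\vec{u}}}{\vec{w}}-f\fp{\vec{w}}$, and observe that $\nabla f_s\fp{\vec{u}}=\sum_{n=1}^N a_n\nabla g_n\fp{\vec{u}}$ is affine in $\vec{a}$. Hence for each fixed $\vec{w}$ the map $\vec{a}\mapsto\inprod{\nabla f_s\fp{\vec{u}}}{\vec{w}}-f\fp{\vec{w}}$ is affine in $\vec{a}$, and taking a pointwise supremum over $\vec{w}$ shows $\vec{a}\mapsto f^*\fp{\nabla f_s\fp{\vec{u}}}$ is convex in $\vec{a}$ (a supremum of affine functions). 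It is also nonnegative: $f^*\fp{\nabla f_s\fp{\vec{u}}}\geq \inprod{\nabla f_s\fp{\vec{u}}}{\vec{0}}-f\fp{\vec{0}}=0$ by Assumption \ref{assump:f}(4). So $r_{\vec{u}}$ is a ratio of a nonnegative convex function over a positive concave function of $\vec{a}$, which is quasiconvex; its sublevel sets $\setcond{\vec{a}}{r_{\vec{u}}\fp{\vec{a}}\leq\gamma}$ are exactly $\setcond{\vec{a}}{f^*\fp{\nabla f_s\fp{\vec{u}}}-\gamma\paren{f_s\fp{\vec{u}}-f\fp{\vec{u}}}\leq 0}$, which for $\gamma\geq 0$ is convex (convex minus $\gamma$ times concave). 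Intersecting over the finitely many $\vec{u}\in\U$ and with $\set{\vec{a}\succeq\vec{1}}$ preserves convexity, so the sublevel sets of the overall objective $\max_{\vec{u}\in\U} r_{\vec{u}}$ are convex, i.e. the objective is quasiconvex; thus \eqref{eq:quasiconvex} is a quasiconvex program, solvable by bisection on $\gamma$ with a convex feasibility check at each iteration.

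I would close by remarking that one should verify each candidate minimizer $\vec{a}$ yields an $f_s$ satisfying Assumption \ref{assump:fs}: parts (1)--(4) follow because a nonnegative combination (with $a_n\geq 1>0$) of functions satisfying Assumption \ref{assump:f}(1)--(4) again satisfies them, and part (5), $f_s\succeq f$ on $[\vec{0},T\vec{1}]$, holds since $f_s\fp{\vec{u}}-f\fp{\vec{u}}=\sum_n\paren{a_n-1}g_n\fp{\vec{u}}\geq 0$ there as each $g_n$ is nonnegative (being increasing with $g_n\fp{\vec{0}}=0$). Theorem \ref{thm:competitive_ratio} then certifies a competitive ratio of $1/\gamma^\star$ where $\gamma^\star$ is the optimal value of \eqref{eq:quasiconvex} evaluated on the full box (the discretization introduces the usual approximation gap, which can be controlled by refining $\U$).

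\textbf{Main obstacle.} The delicate point is the denominator's positivity and the conjugate's finiteness: one must argue that on the effective feasible region the ratio is a genuine (finite, nonnegative) quasiconvex function rather than a $0/0$ or $\infty/\infty$ object — handling the point $\vec{u}=\vec{0}$ (where numerator and denominator both vanish, so the point should be excluded from $\U$ or the ratio defined by a limit) and the boundary $\vec{a}=\vec{1}$ — and to confirm $f^*\fp{\nabla f_s\fp{\vec{u}}}<\infty$ for $\vec{u}$ in the bounded box, which uses that $\nabla f_s\fp{\vec{u}}$ lies in a compact set and hence in the interior of $\dom f^*$ under the stated assumptions.
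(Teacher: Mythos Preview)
Your proposal is correct and follows essentially the same route as the paper: fix $\vec{u}\in\U$, observe that $\nabla f_s\fp{\vec{u}}$ is affine in $\vec{a}$ so that $f^*\fp{\nabla f_s\fp{\vec{u}}}$ is convex in $\vec{a}$, note that the denominator is affine in $\vec{a}$, and conclude quasiconvexity of each ratio via its sublevel sets before taking the pointwise maximum. Your additional remarks on verifying Assumption~\ref{assump:fs} for the resulting $f_s$ and on the degenerate cases ($\vec{u}=\vec{0}$, $\vec{a}=\vec{1}$) go beyond what the paper records but are useful sanity checks rather than a different argument.
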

\begin{proof}
In order to show that Problem \eqref{eq:quasiconvex} is a quasiconvex optimization problem, we must verify that the constraints are convex and the objective is quasiconvex.
It suffices to show that
\[\max_{\vec{u}\in\U}~\frac{f^*\fp{\nabla f_s\fp{\vec{u}}}}{f_s\fp{\vec{u}}-f\fp{\vec{u}}}\]
is a quasiconvex function in $\vec{a}$. Since a non-negative weighted maximum of quasiconvex functions is also quasiconvex, it suffices to show that $\frac{f^*\fp{\nabla f_s\fp{\vec{u}}}}{f_s\fp{\vec{u}}-f\fp{\vec{u}}}$ is quasiconvex in $\vec{a}$ for a fixed $\vec{u}$. We can directly apply the definition of quasiconvexity. Let $S_{\alpha}\fp{f_s}$ be the sub-level sets of $f_s$ for $\vec{a}\in\R^N$. We have the following:
\begin{align*}
S_{\alpha}\fp{f_s}&=\setcond{\vec{a}\succeq\vec{1}}{\frac{f^*\fp{\nabla f_s\fp{\vec{u}}}}{f_s\fp{\vec{u}}-f\fp{\vec{u}}}\leq\alpha}
\\&= \setcond{\vec{a}\succeq\vec{1}}{f^*\fp{\nabla f_s\fp{\vec{u}}}\leq\alpha\paren{f_s\fp{\vec{u}}-f\fp{\vec{u}}}}.
\end{align*}
For a fixed value of $\vec{u}$, $\bracket{\nabla f_s\fp{\vec{u}}}_d$ is linear in $\vec{a}$ for all $d$ and since $f^*$ is always convex, composing a convex function with a linear function of $\vec{a}$ is convex in $\vec{a}$. Finally, since $f_s\fp{\vec{u}}$ is linear in $\vec{a}$, the constraints of $S_{\alpha}\fp{f_s}$ are convex, and thus $S_{\alpha}\fp{f_s}$ is a convex set.
\end{proof}

Since Problem \eqref{eq:quasiconvex} is a quasiconvex optimization problem from Theorem \ref{thm:quasiconvex}, we can solve it by a sequence of convex feasibility problems. For a fixed $\alpha\in\left[1,\infty\right)$, we consider the following feasibility problem:
\begin{equation}
\label{eq:feasibility}
\begin{array}{rl}
\find & \vec{a} \\ 
\subjectto & f^*\fp{\nabla f_s\fp{\vec{u}}} \leq \alpha\paren{f_s\fp{\vec{u}}-f\fp{\vec{u}}} \spforall \vec{u}\in\U\\
&\vec{a}\succeq\vec{1}.
\end{array}
\tag{Q-2}
\end{equation}
We now perform a binary search on $\alpha$ to find the smallest $\alpha$, up to $\epsilon$ precision, such that Problem \eqref{eq:feasibility} is feasible. We write pseudocode for this procedure in Appendix \ref{sec:pseudocode_quasiconvex}.


\section{Numerical Examples}
\label{sec:numerical}

In this section, we illustrate the performance of our algorithm for specific procurement cost functions. In our first example, we use a simple procurement cost function in order to demonstrate the need for a surrogate function 
when calling Algorithm \ref{alg:sim_update}. In our second example, we consider a non-separable polynomial procurement cost function and compare the performance of Algorithm \ref{alg:sim_update} for different surrogate function design techniques.

Consider the procurement cost function $f\fp{u}=u^2$, where $u\in\R_+$. This numerical example shows the necessity of a surrogate function, and how running Algorithm \ref{alg:sim_update} with the original procurement cost function has a competitive ratio of $0$ asymptotically. We show this by crafting a particular arrival instance in which we highlight the weakness of not using a surrogate function. The intuition is that not using a surrogate function allows the decision making to be excessively greedy, in that the algorithm does not caution itself from accumulating a large procurement cost for minimal revenue. This instance, described below, forces the algorithm to accumulate a large procurement cost before seeing higher valued arrivals which come soon after. In this instance, the incoming valuations are linear, and so we have $v_t\fp{x_t}=c_tx_t$. We have $c_t = 2t$. Assume that $T$ is divisible by $2$. From the decision rule of Algorithm \ref{alg:sim_update} called with $f_s$, i.e., $\bar{x}_t=\1{c_t-f_s'\fp{\sum_{i=1}^t\bar{x}_i}}$, calling Algorithm \ref{alg:sim_update} with $f_s\fp{u}=u^2$ leads to an allocation of $x_t=1$ for all $t$ which gives a cumulative reward of $T$. The optimal allocation is one that sets $x_t=1$ for all $t>\frac{T}{2}$ and yields an objective of $\frac{1}{2}\paren{T^2+T}$. This yields a ratio that tends to $0$ as $T$ becomes large. For each of our design techniques, the surrogate function is the same and is $f_s\fp{u}=2u^2$. Calling Algorithm \ref{alg:sim_update} with $f_s\fp{u}=2u^2$ leads to an allocation of $x_t=0.5$ for all $t$ which gives an objective of $\frac{T^2}{4}+\frac{T}{2}$. This yields a ratio that tends to $\frac{1}{2}$ as $T$ becomes large. This example then shows that even for a simple procurement cost function, not using a surrogate function may lead to a competitive ratio that tends to $0$ as $T$ becomes large.


Now, consider the procurement cost function $f\fp{\vec{u}}=u_1^4 + \paren{u_1+u_2}^2$ where $\vec{u}\in\R^2_+$. Figure \ref{fig:general_case} shows the shape of the surrogate function using the design techniques from Sections \ref{sec:polynomial_surrogate} and \ref{sec:surrogate_general} respectively. For $f_s$ from Section \ref{sec:polynomial_surrogate}, we use the surrogate function $f_s\fp{\vec{u}}=\frac{1}{\rho}f\fp{\rho\vec{u}}$ and with Theorem \ref{thm:polynomial}, we choose $\rho=4^{1/3}$. This means that $f_s\fp{\vec{u}} = 4u_1^4+4^{1/3}\paren{u_1+u_2}^2$. This choice of $\rho$ then gives a competitive ratio bound of $4^{-4/3}\approx 0.1575$. For $f_s$ from Section \ref{sec:surrogate_general}, we use surrogate function $f_s\fp{\vec{u}} = a_1u_1^4 + a_2\paren{u_1+u_2}^2$ from Theorem \ref{thm:quasiconvex}. To solve Problem \eqref{eq:quasiconvex}, we set $T=10$ and have $100$ points per square unit in the discretization; i.e., \[\mathcal{U}=\setcond{\vec{u}}{u_i\in\set{0, 0.1, 0.2, \dots, 9.9, 10}~\forall i\in\set{1,2}}.\]
This achieves the competitive ratio bound of approximately $0.1577$ with $a_1 \approx 3.791$ and $a_2 \approx 2.386$. The surrogate function from Section \ref{sec:surrogate_general} allows for an additional design parameter which allows us to achieve a slightly better competitive ratio bound than the surrogate function from Section \ref{sec:polynomial_surrogate}. However, the technique from Section \ref{sec:surrogate_general} has a much higher computational cost due to numerically solving the quasiconvex optimization problem in Problem \eqref{eq:quasiconvex}.
\begin{figure}[h]
\centering
\includegraphics[width=0.4\textwidth]{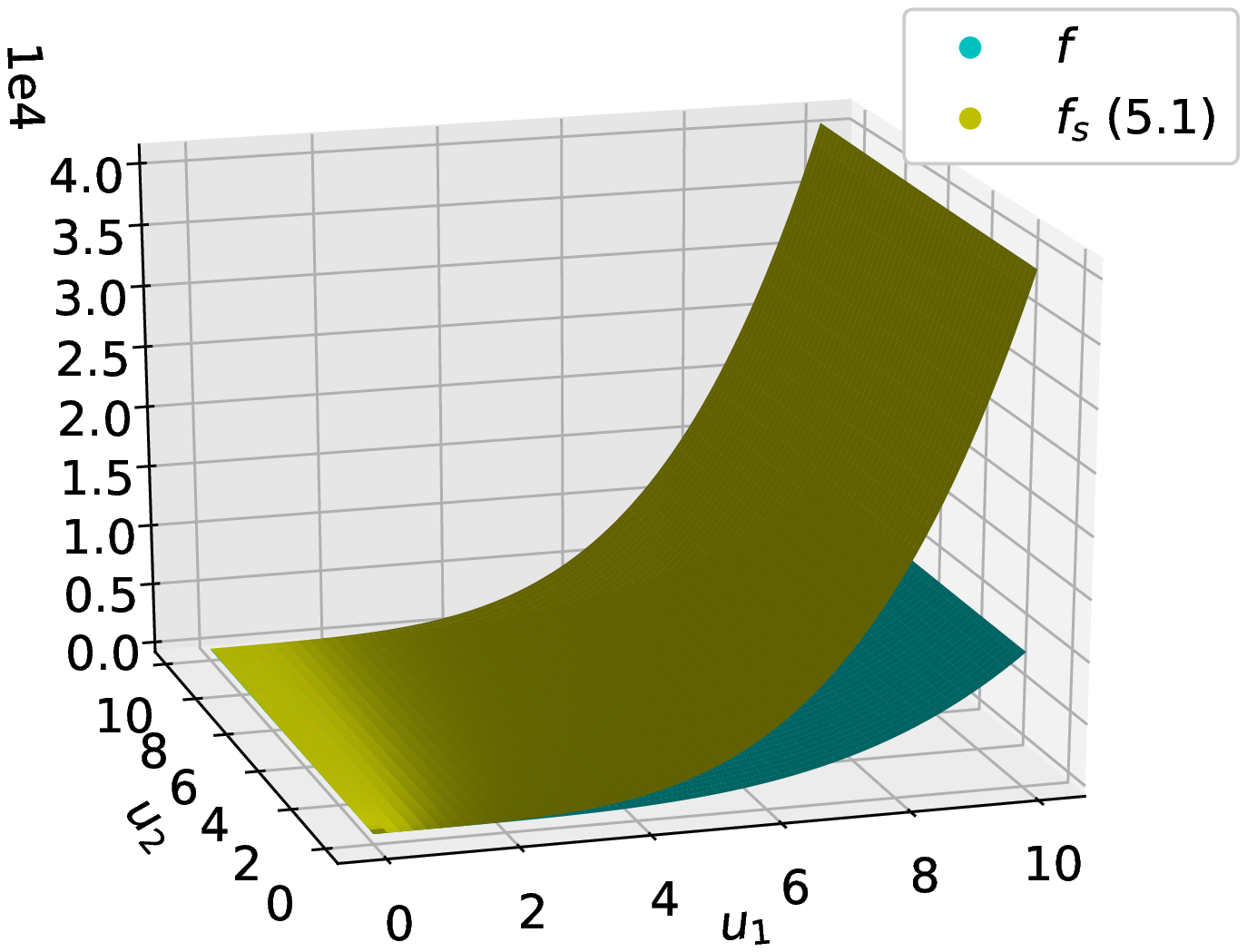}
\includegraphics[width=0.4\textwidth]{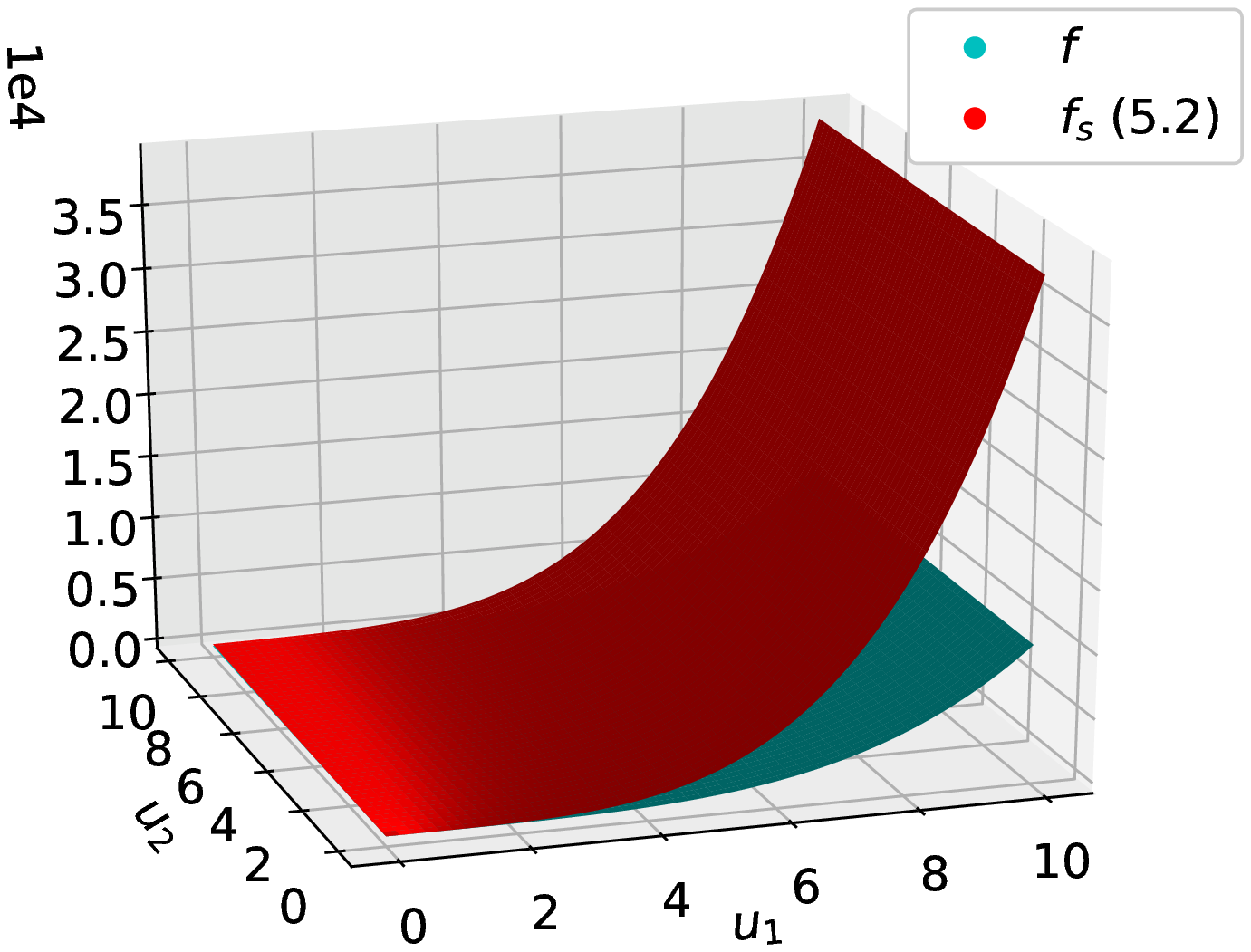}
\caption{Problem \eqref{eq:quasiconvex} applied to $f\fp{\vec{u}}=u_1^4 + \paren{u_1+u_2}^2$. $f_s$ (5.1) represents the surrogate function from using the technique in Section \ref{sec:polynomial_surrogate}. $f_s$ (5.2) represents the surrogate function from using the technique in Section \ref{sec:surrogate_general}.}
\label{fig:general_case}
\end{figure}
Figure \ref{fig:original_alg_illustration} compares the cumulative objective values up to time $t$, i.e., $\sum_{i=1}^tv_i\fp{\bar{\vec{x}}_i}-f\fp{\sum_{i=1}^t\bar{\vec{x}}_i}$, of Algorithm \ref{alg:sim_update} called with different surrogate functions. 
For the surrogate functions, we have the label $f$ representing the surrogate function equal to original production cost function, and so Algorithm \ref{alg:sim_update} is called with $u_1^4 + \paren{u_1+u_2}^2$. We have the label $f_{\text{poly}}$ representing the surrogate function from using the technique in Section \ref{sec:polynomial_surrogate}, so Algorithm \ref{alg:sim_update} is called with $4u_1^4+4^{1/3}\paren{u_1+u_2}^2$. We have the label $f_{\text{design}}$ representing the surrogate function from using the technique in Section \ref{sec:surrogate_general}, so Algorithm \ref{alg:sim_update} is called with $a_1u_1^4 + a_2\paren{u_1+u_2}^2$ where $a_1 \approx 3.791$ and $a_2 \approx 2.386$. Finally, we have the label $f_{\text{chk}}$ representing the surrogate function from using the technique in \cite{Chan2015}, so Algorithm \ref{alg:sim_update} is called with $8u_1^3 + 2\paren{u_1+u_2}^2$. The online arrivals are generated by reasoning about instances that would be adversarial for Algorithm \ref{alg:sim_update} called with the original procurement cost function. The weakness in calling Algorithm \ref{alg:sim_update} called with $f$, i.e, not using a surrogate function, is that the decisions are made too greedily, in that the algorithm does not caution itself from accumulating a large procurement cost for minimal revenue. This instance is thus generated from having online arrivals which force the algorithm to amass a large procurement cost before seeing higher valued arrivals which it can no longer take. In this instance, the incoming valuations are linear, i.e., $v_t\fp{\vec{x}_t}=\vec{c}_t^\top\vec{x}_t$, where
\[\vec{c}_t=\begin{cases}\nabla f\fp{t\cdot\vec{1}}&\text{if $t$ is odd}\\\nabla f\fp{2t\cdot\vec{1}}&\text{if $t$ is even}\end{cases}.\]

\begin{figure}[h]
\centering
\includegraphics[width=0.6\textwidth]{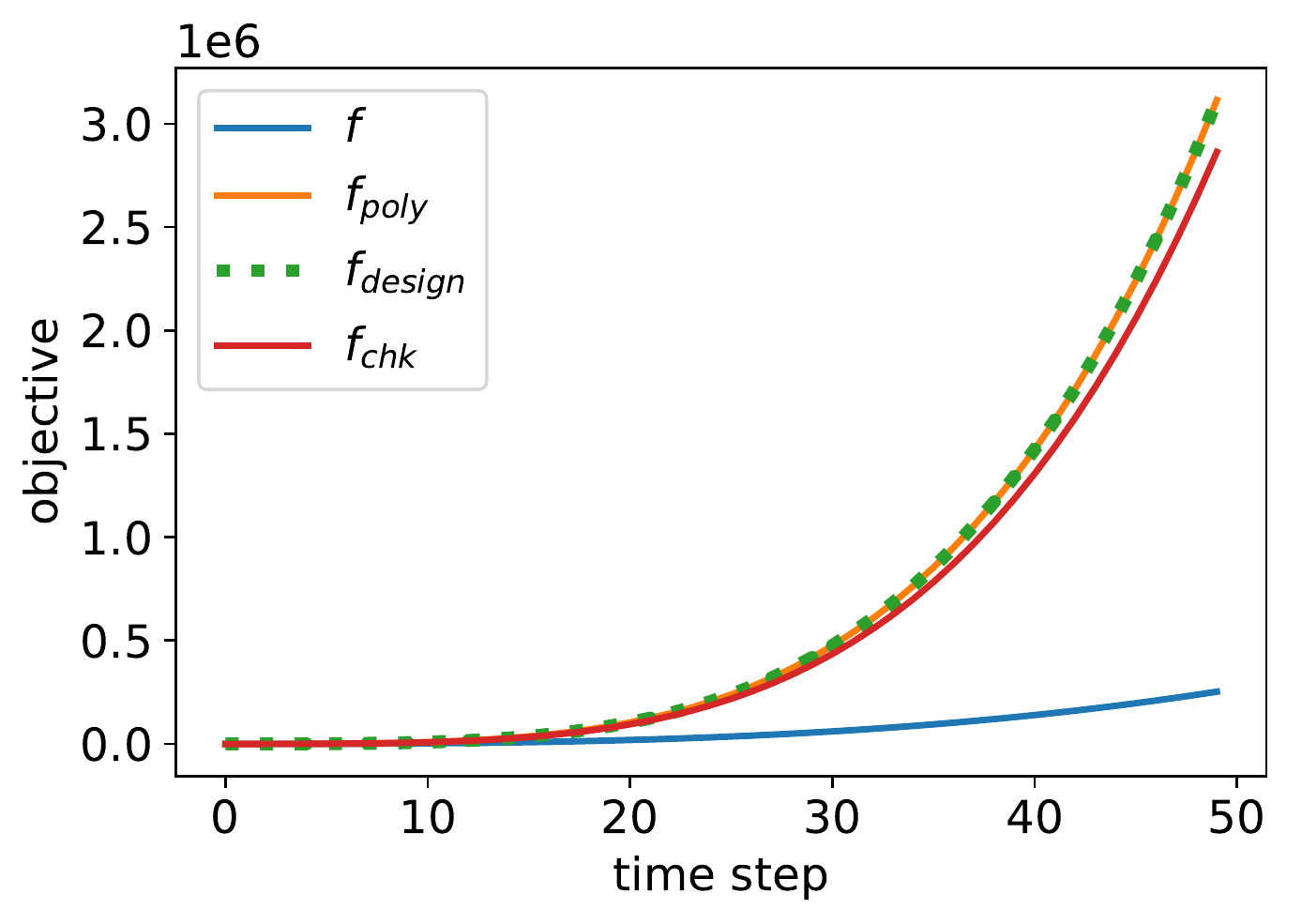}
\caption{A plot comparing the the objectives up to time $t$ of Algorithm \ref{alg:sim_update} called with different surrogate functions. For the surrogate functions, we have $f$ representing the surrogate function equal to $f$, $f_{\text{poly}}$ representing the surrogate function from using the technique in Section \ref{sec:polynomial_surrogate}, $f_{\text{design}}$ representing the surrogate function from using the technique in Section \ref{sec:surrogate_general}, and $f_{\text{chk}}$ using the technique in \cite{Chan2015}.}
\label{fig:original_alg_illustration}
\end{figure}

\section{Posted Pricing Mechanisms}
\label{sec:posted_pricing}
In this section, we propose Algorithm \ref{alg:seq_update_offset} which is a primal-dual algorithm that computes the primal and dual variables sequentially, unlike Algorithm \ref{alg:sim_update} which computes the primal and dual variables simultaneously as the solution to the saddle-point problem in equation \eqref{eq:marginal_opt_dual}. Algorithm \ref{alg:seq_update_offset} is much more computationally efficient, and also possess an economic interpretation of \textit{incentive compatibility} as defined in Definition \ref{def:incentive_compatible}.

\begin{definition}[Incentive compatibility]
\label{def:incentive_compatible}
An online algorithm for problem \eqref{eq:primal} is called incentive compatible when each customer maximizes their utility by being truthful, i.e., each customer reports and acts according to their true beliefs.
\end{definition}

Algorithm \ref{alg:seq_update_offset} is called a posted pricing mechanism, as defined in  Definition \ref{def:posted_pricing}, and immediately satisfies \textit{incentive compatibility}.

\begin{definition}[Posted Pricing Mechanism]
\label{def:posted_pricing}
An online algorithm is a posted pricing mechanism when the seller posts item prices and allows the arriving customer to choose their desired bundle of items given the prices.
\end{definition}

The interpretation here, is that upon arrival, the customer chooses the allocation which maximizes their utility, and this would be identical to the allocation that the seller would assign had the user reported their true valuation function. From the notation of Algorithm \ref{alg:seq_update_offset}, the dual variable, $\bar{\boldsymbol{\lambda}}_t$, represents a price that is revealed at each time step, \textit{before} the customer arrives, and then the allocation for this arriving customer is then determined by this price. The posted price at time step $t$, therefore, does not depend on $v_t$, and so the arriving agent does not need to reveal it. A posted pricing mechanism is therefore desirable in applications where the privacy of $v_t$ is important. 


\begin{algorithm}
\SetKwInOut{Input}{Input}
\Input{$f:\R^D\rar\R, \vec{v}_{\text{offset}}\in\R^D_+$}
 \For{$t=1 \dots T$}{
  $\bar{\boldsymbol{\lambda}}_t = \nabla f\fp{\sum_{i=1}^{t-1} \bar{\vec{x}}_i + \vec{v}_{\text{offset}}}$\;
  $\bar{\vec{x}}_t = \argmax_{\vec{0}\preceq\vec{x}_t\preceq\vec{1}}~v_t\fp{\vec{x}_t}-\bar{\boldsymbol{\lambda}}_t^\top\vec{x}_t$
 }
 \caption{Sequential Update with Offset}
 \label{alg:seq_update_offset}
\end{algorithm}
We propose the primal-dual algorithm in Algorithm \ref{alg:seq_update_offset}. Here, in comparison to Algorithm \ref{alg:sim_update}, $\bar{\boldsymbol{\lambda}}_t$ is being used to set the threshold at time $t$, independent of the allocation made at time $t$. Thus, the value of $\bar{\boldsymbol{\lambda}}_t$ does not require solving a saddle-point problem. Furthermore, in comparison to Algorithm \ref{alg:sim_update}, in addition to passing in the function, $f$, as an argument, we pass in an offset vector, $\vec{v}_{\text{offset}}$, to Algorithm \ref{alg:seq_update_offset} which allows us to additively control the threshold. The naming of both Algorithm \ref{alg:sim_update} as Simultaneous Update and Algorithm \ref{alg:seq_update_offset} as Sequential Update to distinguish between how the primal and dual variables are computed, come from \cite{Eghbali2016}.

\subsection{Analysis without Offset}
In this subsection, we analyze the competitive ratio of Algorithm \ref{alg:seq_update_offset} called with $\vec{v}_{\text{offset}}=\vec{0}$ and $f_s$ satisfying Assumption \ref{assump:fs}. This ensures that at every time step $t$, $\bar{\vec{x}}_t = \1{\bar{\vec{z}}_t - \bar{\boldsymbol{\lambda}}_t \succeq \vec{0}}$, where $\bar{\boldsymbol{\lambda}}_t = \nabla f_s\fp{\sum_{i=1}^{t-1}\bar{\vec{x}}_i}$ and $\vec{z}_t=\nabla v_t\fp{\bar{\vec{x}}_t}$ from Lemma \ref{lemma:v_optimality}. Now, we bound the competitive ratio of Algorithm \ref{alg:seq_update_offset}.
\begin{theorem}
\label{thm:competitive_ratio_seq_0}
Let $f_s$ satisfy Assumption \ref{assump:fs}. The competitive ratio of Algorithm \ref{alg:seq_update_offset} called with $f_s$ and $\vec{v}_{\text{offset}}=\vec{0}$ is bounded by $1/\alpha_{f,f_s}$ where
\[\alpha_{f,f_s}:=\sup_{\vec{0}\preceq\vec{u}\preceq T\vec{1}}\frac{f^*\fp{\nabla f_s\fp{\vec{u}}}}{f_s\fp{\vec{u}} - f\fp{\vec{u}}-\vec{1}^\top\!\paren{\nabla f_s\fp{\vec{u}}-\nabla f_s\fp{\vec{0}}}}.\]
\end{theorem}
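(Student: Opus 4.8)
\emph{Proof strategy.} The plan is to replay the proof of Theorem~\ref{thm:competitive_ratio} almost line for line, tracking the single structural change: with $\vec{v}_{\text{offset}}=\vec{0}$ the posted price at step $t$ in Algorithm~\ref{alg:seq_update_offset} is $\bar{\boldsymbol{\lambda}}_t=\nabla f_s\big(\sum_{i=1}^{t-1}\bar{\vec{x}}_i\big)$, i.e.\ the gradient is evaluated \emph{before} $\bar{\vec{x}}_t$ is added, whereas in Algorithm~\ref{alg:sim_update} it was evaluated \emph{including} $\bar{\vec{x}}_t$. Write $\bar{X}_t:=\sum_{i=1}^t\bar{\vec{x}}_i$ with $\bar{X}_0:=\vec{0}$, set $\bar{\vec{z}}_t:=\nabla v_t(\bar{\vec{x}}_t)$, and let $D^s$ be the dual expression of \eqref{eq:dual} formed from $\vec{z}_t=\bar{\vec{z}}_t$, from $\bar{\boldsymbol{\lambda}}_t$ in the $t$-th $\max$-term, and from $\nabla f_s(\bar{X}_T)$ in the $f^*$-term (the same mixed form used for Algorithm~\ref{alg:sim_update}). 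By Lemma~\ref{lemma:v_optimality} we still have $\bar{\vec{x}}_t=\1{\bar{\vec{z}}_t-\bar{\boldsymbol{\lambda}}_t\succeq\vec{0}}$, so $\sum_d\max\{[\bar{\vec{z}}_t]_d-[\bar{\boldsymbol{\lambda}}_t]_d,0\}=(\bar{\vec{z}}_t-\bar{\boldsymbol{\lambda}}_t)^\top\bar{\vec{x}}_t$, exactly as before.

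The one genuinely new ingredient I would isolate first is a ``telescoping with gap'' estimate:
\[
\sum_{t=1}^T\nabla f_s(\bar{X}_{t-1})^\top\bar{\vec{x}}_t\ \ge\ f_s(\bar{X}_T)-\vec{1}^\top\!\big(\nabla f_s(\bar{X}_T)-\nabla f_s(\vec{0})\big).
\]
To prove it, write $f_s(\bar{X}_T)=\sum_t\big(f_s(\bar{X}_t)-f_s(\bar{X}_{t-1})\big)$ (using $f_s(\vec{0})=0$), bound each increment by $\nabla f_s(\bar{X}_t)^\top\bar{\vec{x}}_t$ by convexity (Assumption~\ref{assump:fs}(1)), split $\nabla f_s(\bar{X}_t)=\nabla f_s(\bar{X}_{t-1})+\big(\nabla f_s(\bar{X}_t)-\nabla f_s(\bar{X}_{t-1})\big)$, use $\vec{0}\preceq\bar{\vec{x}}_t\preceq\vec{1}$ together with $\nabla f_s(\bar{X}_t)\succeq\nabla f_s(\bar{X}_{t-1})$ (Assumption~\ref{assump:fs}(3)) to replace the second inner product by $\vec{1}^\top\big(\nabla f_s(\bar{X}_t)-\nabla f_s(\bar{X}_{t-1})\big)$, and telescope that last sum. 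This single inequality is the source of the extra denominator term $\vec{1}^\top(\nabla f_s(\vec{u})-\nabla f_s(\vec{0}))$; in Theorem~\ref{thm:competitive_ratio} the corresponding step used plain convexity because there $\bar{\boldsymbol{\lambda}}_t=\nabla f_s(\bar{X}_t)$ and the per-step gap vanishes.

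Given this, the rest is bookkeeping. Substituting $\bar{\vec{z}}_t=\nabla v_t(\bar{\vec{x}}_t)$, $\bar{\boldsymbol{\lambda}}_t=\nabla f_s(\bar{X}_{t-1})$ and the concave Fenchel--Young equality \eqref{eq:fenchel-young-concave} with $g=v_t$, $\vec{u}=\bar{\vec{x}}_t$ into $D^s$ gives
\[
D^s=\sum_{t=1}^Tv_t(\bar{\vec{x}}_t)-\sum_{t=1}^T\nabla f_s(\bar{X}_{t-1})^\top\bar{\vec{x}}_t+f^*(\nabla f_s(\bar{X}_T));
\]
applying the estimate above and adding and subtracting $f(\bar{X}_T)$ yields
\[
P^s-D^s\ \ge\ f_s(\bar{X}_T)-f(\bar{X}_T)-\vec{1}^\top\!\big(\nabla f_s(\bar{X}_T)-\nabla f_s(\vec{0})\big)-f^*(\nabla f_s(\bar{X}_T)),
\]
with $P^s$ as in \eqref{eq:Ps}. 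I then need the analog of Lemma~\ref{lemma:objective_positive}: since $\vec{0}$ is feasible in the step-$t$ subproblem and $v_t(\vec{0})=0$, optimality of $\bar{\vec{x}}_t$ gives $v_t(\bar{\vec{x}}_t)\ge\bar{\boldsymbol{\lambda}}_t^\top\bar{\vec{x}}_t$, and summing together with the ``telescoping with gap'' estimate gives $\sum_t v_t(\bar{\vec{x}}_t)\ge f_s(\bar{X}_T)-\vec{1}^\top(\nabla f_s(\bar{X}_T)-\nabla f_s(\vec{0}))$, i.e.\ $P^s\ge f_s(\bar{X}_T)-f(\bar{X}_T)-\vec{1}^\top(\nabla f_s(\bar{X}_T)-\nabla f_s(\vec{0}))$. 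Finally, since $\nabla f_s$ is increasing, $\bar{\boldsymbol{\lambda}}_t=\nabla f_s(\bar{X}_{t-1})\preceq\nabla f_s(\bar{X}_T)$, so replacing each $\bar{\boldsymbol{\lambda}}_t$ by $\nabla f_s(\bar{X}_T)$ only lowers the $\max$-terms and exhibits a feasible point of \eqref{eq:dual}; hence $D^s\ge D^\star\ge P^\star$ by weak duality. Dividing the duality-gap bound by $P^s$, using the previous display to replace $P^s$ in the denominator by the smaller quantity $f_s(\bar{X}_T)-f(\bar{X}_T)-\vec{1}^\top(\nabla f_s(\bar{X}_T)-\nabla f_s(\vec{0}))$ (with due attention to the sign of the numerator, which is nonpositive since $f^*(\nabla f_s(\vec{u}))\ge f_s(\vec{u})-f(\vec{u})$ by convexity, exactly as in Theorem~\ref{thm:competitive_ratio}), and taking the infimum over $\vec{0}\preceq\vec{u}\preceq T\vec{1}$ (legitimate because $\vec{0}\preceq\bar{X}_T\preceq T\vec{1}$), I define $\beta_{f,f_s}$ as that infimum, conclude $P^s-P^\star\ge\beta_{f,f_s}P^s$, and rearrange to $P^s/P^\star\ge 1/(1-\beta_{f,f_s})=1/\alpha_{f,f_s}$, since $1-\beta_{f,f_s}$ is precisely the claimed supremum $\sup_{\vec{u}}\frac{f^*(\nabla f_s(\vec{u}))}{f_s(\vec{u})-f(\vec{u})-\vec{1}^\top(\nabla f_s(\vec{u})-\nabla f_s(\vec{0}))}$.

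The main obstacle is exactly this off-by-one evaluation point of $\nabla f_s$: it destroys both the clean convexity bound used in Theorem~\ref{thm:competitive_ratio} and the clean nonnegativity of Lemma~\ref{lemma:objective_positive}. The delicate part is to control the per-step slack $(\nabla f_s(\bar{X}_t)-\nabla f_s(\bar{X}_{t-1}))^\top\bar{\vec{x}}_t$ uniformly by $\vec{1}^\top(\nabla f_s(\bar{X}_t)-\nabla f_s(\bar{X}_{t-1}))$ (this is where $\bar{\vec{x}}_t\preceq\vec{1}$ and the increasing-gradient assumption both enter) and, crucially, to verify that the \emph{same} correction term $\vec{1}^\top(\nabla f_s(\bar{X}_T)-\nabla f_s(\vec{0}))$ appears in both the duality-gap bound and the positivity bound, so that the final ratio collapses to exactly the stated expression rather than to something with mismatched correction terms. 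One should also note, as in Theorem~\ref{thm:competitive_ratio}, that the quotient manipulations presume the denominator $f_s(\vec{u})-f(\vec{u})-\vec{1}^\top(\nabla f_s(\vec{u})-\nabla f_s(\vec{0}))$ is positive over the relevant range; unlike there, Assumption~\ref{assump:fs}(5) alone does not guarantee this, which is what motivates the offset variant analyzed next.
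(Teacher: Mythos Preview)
Your proposal is correct and mirrors the paper's own argument essentially line for line: the same telescoping-with-gap estimate (the paper writes it as $L_1\le L_2$), the same positivity lemma (your analog of Lemma~\ref{lemma:objective_positive} is exactly the paper's Lemma~\ref{lemma:objective_positive_seq_0}), the same $D^s\ge D^\star$ step via the increasing-gradient assumption, and the same ratio manipulation at the end. Two inessential differences: (i) you put $\nabla f_s(\bar{X}_T)$ directly into the $f^*$-term of $D^s$, whereas the paper uses $\bar{\boldsymbol{\lambda}}_T=\nabla f_s(\bar{X}_{T-1})$ and only later upgrades it to $\nabla f_s(\bar{X}_T)$ via monotonicity of $f^*$; (ii) you justify nonpositivity of the numerator via Fenchel--Young and convexity of $f_s$, while the paper routes it through a separate lemma $P^{s0}\le D^{s0}$ based on weak duality. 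Both shortcuts are valid and make your write-up slightly more self-contained.
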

This proof is very similar to that of Theorem \ref{thm:competitive_ratio} and so the proof is provided in Appendix \ref{sec:proof_of_competitive_ratio_seq_0}.

\paragraph{Designing the general surrogate function} In similar vein to Section \ref{sec:surrogate_general}, we now propose a design technique for the surrogate function, $f_s$ to be used in Algorithm \ref{alg:seq_update_offset} based on Theorem \ref{thm:competitive_ratio_seq_0}.

\begin{theorem}
\label{thm:quasiconvex_seq_0}
Let $f\fp{\vec{u}}=\sum_{n=1}^Ng_n\fp{\vec{u}}$ where $g_n$ satisfies Assumption \ref{assump:f}
for all $n\in\bracket{N}$. Let $\vec{a}\in\R^N$, where $\vec{a}\succeq\vec{1}$, and $f_s\fp{\vec{u}}=\sum_{n=1}^Na_ng_n\fp{\vec{u}}$. Consider a discretization of the set $\setcond{\vec{u}}{\vec{0}\preceq\vec{u}\preceq T\vec{1}}$ and denote the points in this discretized set as $\U$. The following problem
\begin{equation}
\label{eq:quasiconvex_seq_0} 
\minimize_{\vec{a}\succeq\vec{1}}~\max_{\vec{u}\in\U}~\frac{f^*\fp{\nabla f_s\fp{\vec{u}}}}{f_s\fp{\vec{u}}-f\fp{\vec{u}}-\vec{1}^\top\!\paren{\nabla f_s\fp{\vec{u}}-\nabla f_s\fp{\vec{0}}}}
\tag{Q-2}
\end{equation}
can be solved as a quasiconvex optimization problem.
\end{theorem}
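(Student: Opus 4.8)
The plan is to mirror the proof of Theorem~\ref{thm:quasiconvex} almost verbatim, since the only change is that the denominator of the objective has an extra additive term $-\vec{1}^\top\!\paren{\nabla f_s\fp{\vec{u}}-\nabla f_s\fp{\vec{0}}}$, which is still linear in $\vec{a}$. First I would note that Problem~\eqref{eq:quasiconvex_seq_0} is a quasiconvex optimization problem precisely when its feasible region is convex and its objective is quasiconvex; the constraint $\vec{a}\succeq\vec{1}$ is clearly a convex (polyhedral) set, so the work is entirely in the objective. Since a pointwise maximum over the finite set $\U$ of quasiconvex functions is quasiconvex, it suffices to fix $\vec{u}\in\U$ and show that
\[
\vec{a}\;\longmapsto\;\frac{f^*\fp{\nabla f_s\fp{\vec{u}}}}{f_s\fp{\vec{u}}-f\fp{\vec{u}}-\vec{1}^\top\!\paren{\nabla f_s\fp{\vec{u}}-\nabla f_s\fp{\vec{0}}}}
\]
is quasiconvex in $\vec{a}$ on the region $\vec{a}\succeq\vec{1}$.

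Next I would examine the sub-level sets. For a fixed $\vec{u}$, write $f_s\fp{\vec{u}}=\sum_{n=1}^N a_n g_n\fp{\vec{u}}$, so $f_s\fp{\vec{u}}$ is linear in $\vec{a}$; likewise each component $\bracket{\nabla f_s\fp{\vec{u}}}_d=\sum_{n=1}^N a_n\bracket{\nabla g_n\fp{\vec{u}}}_d$ is linear in $\vec{a}$, and hence so is $\bracket{\nabla f_s\fp{\vec{u}}-\nabla f_s\fp{\vec{0}}}_d$; and $f\fp{\vec{u}}$ is a constant (independent of $\vec{a}$). Therefore the denominator $f_s\fp{\vec{u}}-f\fp{\vec{u}}-\vec{1}^\top\!\paren{\nabla f_s\fp{\vec{u}}-\nabla f_s\fp{\vec{0}}}$ is an affine function of $\vec{a}$. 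Assuming this denominator is positive on the feasible region (which is needed for the competitive ratio bound $1/\alpha_{f,f_s}$ in Theorem~\ref{thm:competitive_ratio_seq_0} to be meaningful, and should be imposed or verified as a standing assumption), the sub-level set at level $\alpha\ge 1$ is
\begin{align*}
S_\alpha\fp{f_s}
&=\setcond{\vec{a}\succeq\vec{1}}{\frac{f^*\fp{\nabla f_s\fp{\vec{u}}}}{f_s\fp{\vec{u}}-f\fp{\vec{u}}-\vec{1}^\top\!\paren{\nabla f_s\fp{\vec{u}}-\nabla f_s\fp{\vec{0}}}}\le\alpha}\\
&=\setcond{\vec{a}\succeq\vec{1}}{f^*\fp{\nabla f_s\fp{\vec{u}}}\le\alpha\paren{f_s\fp{\vec{u}}-f\fp{\vec{u}}-\vec{1}^\top\!\paren{\nabla f_s\fp{\vec{u}}-\nabla f_s\fp{\vec{0}}}}}.
\end{align*}
The right-hand side is affine in $\vec{a}$; the left-hand side is $f^*$ composed with an affine map of $\vec{a}$, and since $f^*$ is convex (a convex conjugate is always convex) and convexity is preserved under affine precomposition, the left-hand side is a convex function of $\vec{a}$. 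Hence $S_\alpha\fp{f_s}$ is the intersection of the polyhedron $\set{\vec{a}\succeq\vec{1}}$ with a convex sub-level set of a convex function, and is therefore convex for every $\alpha$. By definition this makes the objective quasiconvex in $\vec{a}$, and taking the $\U$-indexed maximum preserves quasiconvexity, completing the argument.

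The only real subtlety — and the point I would flag as the main obstacle — is the positivity (more precisely, the sign consistency) of the denominator over the feasible region. If the denominator were allowed to change sign, the sub-level-set characterization above would fail: multiplying through by a possibly-negative quantity flips the inequality, and the set need not be convex. In the proof of Theorem~\ref{thm:quasiconvex} the analogous positivity of $f_s\fp{\vec{u}}-f\fp{\vec{u}}$ followed from Assumption~\ref{assump:fs}(5); here one must check that Assumption~\ref{assump:fs}(5) together with $\vec{a}\succeq\vec{1}$ still guarantees $f_s\fp{\vec{u}}-f\fp{\vec{u}}-\vec{1}^\top\!\paren{\nabla f_s\fp{\vec{u}}-\nabla f_s\fp{\vec{0}}}>0$ on $\U$, or else restrict to the subset of $\vec{a}$ where this holds (which is itself a convex set, since the denominator is affine in $\vec{a}$) and intersect it into the feasible region. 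Once this is settled, everything else is a routine transcription of the earlier argument, with ``linear in $\vec{a}$'' upgraded to ``affine in $\vec{a}$'' wherever the extra offset term appears.
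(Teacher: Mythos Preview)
Your proposal is correct and follows essentially the same argument as the paper: fix $\vec{u}$, rewrite the sub-level set as a convex-function-$\le$-affine-function inequality by noting that $\nabla f_s\fp{\vec{u}}$ and $f_s\fp{\vec{u}}$ are linear in $\vec{a}$ and that $f^*$ is convex, then take the finite pointwise max over $\U$. Your additional flag on the positivity of the denominator is a valid concern (needed for the multiply-through step) that the paper's own proof does not address explicitly.
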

This proof is very similar to that of Theorem \ref{thm:quasiconvex} and so the proof is provided in Appendix \ref{sec:proof_of_quasiconvex_seq_0}.

\subsection{Analysis with Offset}
In this section, we show that posting a \textit{more cautious} price, i.e., setting a larger threshold due to the uncertainty from the allocation, allows for a clean analysis of the competitive ratio of Algorithm \ref{alg:seq_update_offset}. We term a larger price as \textit{more cautious}, since an allocation is not made unless the larger threshold is reached, implying a larger degree of caution 
for the current time step. This larger threshold comes from the assumption that the gradient of $f_s$ is increasing, and so adding a non-negative offset to the argument increases $\nabla f_s$.

In this subsection, we analyze Algorithm \ref{alg:seq_update_offset} called with $f_s$ satisfying Assumption \ref{assump:fs_seq} and $\vec{v}_{\text{offset}}=\vec{1}$. This ensures that at every time step $t$, $\bar{\vec{x}}_t = \1{\bar{\vec{z}}_t - \bar{\boldsymbol{\lambda}}_t \succeq \vec{0}}$, where $\bar{\boldsymbol{\lambda}}_t = \nabla f_s\fp{\sum_{i=1}^{t-1}\bar{\vec{x}}_i+\vec{1}}$ and $\bar{\vec{z}}_t=v_t\fp{\bar{\vec{x}}_t}$ from Lemma \ref{lemma:v_optimality}. 

We now consider the following assumptions on $f_s$.
\begin{assumption}
\label{assump:fs_seq}
The function $f_s: \mathbb{R}_+^D\to \mathbb{R}_+$ satisfies the following:
\begin{enumerate}
\item $f_s$ is convex, differentiable, and closed.
\item $f_s$ is increasing; i.e., $\vec{u}\succeq\vec{v}$ implies $f_s\fp{\vec{u}}\geq f_s\fp{\vec{v}}$.
\item $f_s$ has an increasing gradient; i.e., $\vec{u}\succeq\vec{v}$ implies $\nabla f_s\fp{\vec{u}}\succeq f_s\fp{\vec{v}}$.
\item $f_s\fp{\vec{0}}=0$.
\item $f_s\fp{\vec{u}}\geq f\fp{\vec{u}}$ for all $\vec{0}\preceq\vec{u}\preceq \paren{T-1}\vec{1}$.
\item $f_s\fp{\vec{a}}-f\fp{\vec{a}}\leq f_s\fp{\vec{b}}-f\fp{\vec{b}}$ if $\vec{0}\preceq\vec{a}\preceq\vec{b}$.
\end{enumerate}
\end{assumption}
Assumptions \ref{assump:fs_seq}(1)-(4) are identical to Assumptions \ref{assump:fs}(1)-(4).

We now bound the competitive ratio of Algorithm \ref{alg:seq_update_offset}.
\begin{theorem}
\label{thm:competitive_ratio_seq_1}
Let $f_s$ satisfy Assumption \ref{assump:fs_seq}. The competitive ratio of Algorithm \ref{alg:seq_update_offset} called with $f_s$ and $\vec{v}_{\text{offset}}=\vec{1}$ is bounded by
$1/\alpha_{f,f_s}$ where
\[\alpha_{f,f_s}:=\sup_{\vec{0}\preceq\vec{u}\preceq \paren{T-1}\vec{1}}\frac{f^*\fp{\nabla f_s\fp{\vec{u}+\vec{1}}}}{f_s\fp{\vec{u}} - f\fp{\vec{u}}}.\]
\end{theorem}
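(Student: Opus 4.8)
The plan is to mirror the proof of Theorem \ref{thm:competitive_ratio} almost step for step, with the modifications dictated by the offset $\vec{v}_{\text{offset}}=\vec{1}$ and by the fact that the dual variable at time $t$ is now $\bar{\boldsymbol{\lambda}}_t=\nabla f_s(\sum_{i=1}^{t-1}\bar{\vec{x}}_i+\vec{1})$ rather than $\nabla f_s(\sum_{i=1}^{t}\bar{\vec{x}}_i)$. First I would set up the primal and dual objectives $P^s$ and $D^s$ associated with Algorithm \ref{alg:seq_update_offset} (this requires the analogue of Lemma \ref{lemma:v_optimality}, which tells us $\bar{\vec{x}}_t=\1{\bar{\vec{z}}_t-\bar{\boldsymbol{\lambda}}_t\succeq\vec{0}}$ with $\bar{\vec{z}}_t=\nabla v_t(\bar{\vec{x}}_t)$), and rewrite $D^s$ in terms of $P^s$ using the decision rule and the concave Fenchel--Young equality \eqref{eq:fenchel-young-concave} applied to $v_t$, exactly as in steps (a)--(e) of Theorem \ref{thm:competitive_ratio}. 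The telescoping step that bounds $\sum_t \nabla f_s(\cdot)^\top\bar{\vec{x}}_t$ by $f_s(\sum_t\bar{\vec{x}}_t)$ via convexity must now be done carefully: because the argument of $\nabla f_s$ is shifted by $\vec{1}$, convexity of $f_s$ gives $\nabla f_s(\sum_{i=1}^{t-1}\bar{\vec{x}}_i+\vec{1})^\top\bar{\vec{x}}_t \ge f_s(\sum_{i=1}^{t}\bar{\vec{x}}_i+\vec{1})-f_s(\sum_{i=1}^{t-1}\bar{\vec{x}}_i+\vec{1})$, so the telescoping sum collapses to $f_s(\sum_{t=1}^T\bar{\vec{x}}_t+\vec{1})-f_s(\vec{1})$; I would need to relate this back to $f_s(\sum_t\bar{\vec{x}}_t)$, which is where Assumption \ref{assump:fs_seq}(6) (the increment $f_s-f$ is monotone along the order) enters.

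Next I would form the ratio $(f_s(\cdot)-f^*(\bar{\boldsymbol{\lambda}}_T)-f(\cdot))/P^s$ and lower bound it by an infimum over $\vec{u}$ in a box. The key bookkeeping point is that $\bar{\boldsymbol{\lambda}}_T=\nabla f_s(\sum_{i=1}^{T-1}\bar{\vec{x}}_i+\vec{1})$, and since $\sum_{i=1}^{T-1}\bar{\vec{x}}_i\preceq(T-1)\vec{1}$, writing $\vec{u}:=\sum_{i=1}^{T-1}\bar{\vec{x}}_i$ puts us on the box $\vec{0}\preceq\vec{u}\preceq(T-1)\vec{1}$ and produces the term $f^*(\nabla f_s(\vec{u}+\vec{1}))$ in the numerator of $\alpha_{f,f_s}$, matching the statement. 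I would use the analogue of Lemma \ref{lemma:objective_positive} (objective nonnegativity for this algorithm) to replace $P^s$ in the denominator by $f_s(\cdot)-f(\cdot)$, together with Assumption \ref{assump:fs_seq}(5) to ensure nonnegativity of $\alpha_{f,f_s}$, and Assumption \ref{assump:fs_seq}(6) to handle the discrepancy between the argument $\sum_{t=1}^T\bar{\vec{x}}_t$ appearing from the telescoping and the argument $\vec{u}=\sum_{t=1}^{T-1}\bar{\vec{x}}_t$ appearing in $\bar{\boldsymbol{\lambda}}_T$. Finally, I would lower bound $D^s$ by $D^\star$ using Assumption \ref{assump:fs_seq}(3) (increasing gradient, hence $\bar{\boldsymbol{\lambda}}_t\preceq\bar{\boldsymbol{\lambda}}_T$ for all $t$, since the arguments are nested and shifted by the same $\vec{1}$), then invoke weak duality $D^\star\ge P^\star$, and rearrange $P^s-P^\star\ge \beta_{f,f_s}P^s$ into $P^s/P^\star\ge 1/\alpha_{f,f_s}$ with $\alpha_{f,f_s}=1-\beta_{f,f_s}$.

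The main obstacle I anticipate is the offset-induced mismatch in arguments: the telescoping of $f_s$ along the partial sums naturally produces $f_s(\sum_{t}\bar{\vec{x}}_t+\vec{1})$, whereas the competitive ratio is stated in terms of $f_s(\vec{u})$ and $\nabla f_s(\vec{u}+\vec{1})$ with $\vec{u}=\sum_{t=1}^{T-1}\bar{\vec{x}}_t$. Reconciling the ``$+\vec{1}$'' inside $f_s$ versus inside $\nabla f_s$, and the ``$T$ terms'' versus ``$T-1$ terms,'' is precisely what Assumption \ref{assump:fs_seq}(6) is engineered to absorb, so the delicate part is writing the chain of inequalities in the order that makes each invocation of (5) and (6) valid without circularity. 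Everything else is a routine adaptation of the Theorem \ref{thm:competitive_ratio} argument, so I would keep the full derivation in an appendix and only flag these two structural differences in the main text.
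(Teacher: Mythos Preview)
Your overall skeleton is right, but the telescoping step you describe is in the wrong direction and cannot be repaired by Assumption \ref{assump:fs_seq}(6). Concretely, to upper bound $D^{s}$ you need a \emph{lower} bound on $\sum_t \nabla f_s\bigl(\sum_{i=1}^{t-1}\bar{\vec{x}}_i+\vec{1}\bigr)^\top\bar{\vec{x}}_t$. But convexity with the gradient evaluated at the \emph{lower} endpoint of the increment gives the opposite sign: with $\vec{y}_t=\sum_{i=1}^{t-1}\bar{\vec{x}}_i+\vec{1}$ one has
\[
\nabla f_s(\vec{y}_t)^\top\bar{\vec{x}}_t \;\le\; f_s(\vec{y}_t+\bar{\vec{x}}_t)-f_s(\vec{y}_t),
\]
not $\ge$. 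So your collapse to $f_s\bigl(\sum_t\bar{\vec{x}}_t+\vec{1}\bigr)-f_s(\vec{1})$ is an upper bound on the sum, which yields a lower bound on $D^{s}$ rather than the upper bound you need. Assumption \ref{assump:fs_seq}(6) concerns the increment $f_s-f$, not $f_s$ alone, and does not convert this into the desired inequality.

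The paper sidesteps this by inserting one use of Assumption \ref{assump:fs_seq}(3) \emph{before} telescoping: since $\bar{\vec{x}}_t\preceq\vec{1}$, the increasing-gradient property gives
\[
\nabla f_s\!\Bigl(\sum_{i=1}^{t-1}\bar{\vec{x}}_i+\vec{1}\Bigr)\;\succeq\;\nabla f_s\!\Bigl(\sum_{i=1}^{t}\bar{\vec{x}}_i\Bigr),
\]
after which the telescoping proceeds exactly as in Theorem \ref{thm:competitive_ratio} on the \emph{unshifted} partial sums, yielding $-f_s\bigl(\sum_t\bar{\vec{x}}_t\bigr)$ directly. No appeal to Assumption \ref{assump:fs_seq}(6) is needed at this stage. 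Assumption \ref{assump:fs_seq}(6) enters only at the very end (your second paragraph has this right): after forming the ratio with numerator and denominator involving $f_s\bigl(\vec{u}+\bar{\vec{x}}_T\bigr)-f\bigl(\vec{u}+\bar{\vec{x}}_T\bigr)$ for $\vec{u}=\sum_{t=1}^{T-1}\bar{\vec{x}}_t$, one uses that $(b-a)/b$ is increasing in $b$ together with monotonicity of $f_s-f$ to drop $\bar{\vec{x}}_T$ and land on the stated $\alpha_{f,f_s}$. Fix the telescoping as above and the rest of your outline goes through.
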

This proof is very similar to that of Theorem \ref{thm:competitive_ratio} and so the proof is provided in Appendix \ref{sec:proof_of_competitive_ratio_seq_1}.

\paragraph{Designing the general surrogate function} In similar vein to Section \ref{sec:surrogate_general}, we now propose a design technique for the surrogate function, $f_s$ to be used in Algorithm \ref{alg:seq_update_offset} based on Theorem \ref{thm:competitive_ratio_seq_1}.

\begin{theorem}
\label{thm:quasiconvex_seq_1}
Let $f\fp{\vec{u}}=\sum_{n=1}^Ng_n\fp{\vec{u}}$ where $g_n$ satisfies Assumption \ref{assump:f}
for all $n\in\bracket{N}$. Let $\vec{a}\in\R^N$, where $\vec{a}\succeq\vec{1}$, and $f_s\fp{\vec{u}}=\sum_{n=1}^Na_ng_n\fp{\vec{u}}$. Consider a discretization of the set $\setcond{\vec{u}}{\vec{0}\preceq\vec{u}\preceq \paren{T-1}\vec{1}}$ and denote the points in this discretized set as $\U$. The following problem
\begin{equation}
\label{eq:quasiconvex_seq_1} 
\minimize_{\vec{a}\succeq\vec{1}}~\max_{\vec{u}\in\U}~\frac{f^*\fp{\nabla f_s\fp{\vec{u}+\vec{1}}}}{f_s\fp{\vec{u}}-f\fp{\vec{u}}}
\tag{Q-3}
\end{equation}
can be solved as a quasiconvex optimization problem.
\end{theorem}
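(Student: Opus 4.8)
The plan is to mirror the argument of Theorem~\ref{thm:quasiconvex} essentially verbatim, since the only change is the form of the ratio appearing in the objective. As in that proof, it suffices to verify two things: that the constraint set $\{\vec{a}\succeq\vec{1}\}$ is convex (immediate, it is a polyhedron), and that the objective $\max_{\vec{u}\in\U}\frac{f^*\fp{\nabla f_s\fp{\vec{u}+\vec{1}}}}{f_s\fp{\vec{u}}-f\fp{\vec{u}}}$ is quasiconvex in $\vec{a}$. Because a pointwise maximum (over the finite set $\U$) of quasiconvex functions is quasiconvex, I would reduce to showing that for each \emph{fixed} $\vec{u}\in\U$ the single ratio $\frac{f^*\fp{\nabla f_s\fp{\vec{u}+\vec{1}}}}{f_s\fp{\vec{u}}-f\fp{\vec{u}}}$ is quasiconvex in $\vec{a}$.

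For a fixed $\vec{u}$, I would examine the sublevel sets directly, as in Theorem~\ref{thm:quasiconvex}. For $\alpha\in[1,\infty)$ (note $f_s\fp{\vec{u}}-f\fp{\vec{u}}\geq 0$ on the relevant range by Assumption~\ref{assump:fs_seq}(5), so the sign of the denominator is controlled), the sublevel set is
\[
S_\alpha = \setcond{\vec{a}\succeq\vec{1}}{f^*\fp{\nabla f_s\fp{\vec{u}+\vec{1}}}\leq \alpha\paren{f_s\fp{\vec{u}}-f\fp{\vec{u}}}}.
\]
Since $f_s\fp{\vec{w}}=\sum_n a_n g_n\fp{\vec{w}}$ is linear in $\vec{a}$ for any fixed point $\vec{w}$, in particular $f_s\fp{\vec{u}}$ is linear in $\vec{a}$, so the right-hand side $\alpha\paren{f_s\fp{\vec{u}}-f\fp{\vec{u}}}$ is an affine function of $\vec{a}$ (with $f\fp{\vec{u}}$ a constant). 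For the left-hand side, the key observation — identical in spirit to the one used for Theorem~\ref{thm:quasiconvex} — is that $\bracket{\nabla f_s\fp{\vec{u}+\vec{1}}}_d = \sum_n a_n \bracket{\nabla g_n\fp{\vec{u}+\vec{1}}}_d$ is linear in $\vec{a}$ for the fixed evaluation point $\vec{u}+\vec{1}$. Hence $\nabla f_s\fp{\vec{u}+\vec{1}}$ is an affine map of $\vec{a}$, and composing the convex function $f^*$ with an affine map yields a convex function of $\vec{a}$. Therefore $S_\alpha$ is defined by a convex inequality together with the polyhedral constraint $\vec{a}\succeq\vec{1}$, so it is convex; quasiconvexity of the ratio follows by definition, and then quasiconvexity of the objective follows from the pointwise-maximum property.

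The only genuinely new ingredient relative to Theorem~\ref{thm:quasiconvex} is that the argument of $\nabla f_s$ is shifted to $\vec{u}+\vec{1}$ rather than $\vec{u}$; but since $\vec{u}+\vec{1}$ is still a \emph{fixed} vector once $\vec{u}$ is fixed, the linearity-in-$\vec{a}$ of $\nabla f_s$ evaluated there is unaffected, so there is no real obstacle here. The one point worth stating carefully is that the denominator $f_s\fp{\vec{u}}-f\fp{\vec{u}}$ is nonnegative on $\U$ (which lies in $\{\vec0\preceq\vec u\preceq(T-1)\vec1\}$) by Assumption~\ref{assump:fs_seq}(5), so that scaling the inequality $f^*\fp{\nabla f_s\fp{\vec u+\vec1}}\le\alpha(f_s\fp{\vec u}-f\fp{\vec u})$ by the denominator preserves its direction and the sublevel-set manipulation is valid; if one wishes to be fully rigorous about the edge case where the denominator vanishes, the set $\{\vec a:\text{denominator}=0\}$ contributes either all of $\{\vec a\succeq\vec1\}$ or none of it to $S_\alpha$ depending on whether the numerator is $\le 0$, and in either case convexity is preserved. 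With quasiconvexity of the objective and convexity of the feasible set established, Problem~\eqref{eq:quasiconvex_seq_1} is by definition a quasiconvex program, which completes the proof. As with Problem~\eqref{eq:quasiconvex}, it can then be solved by bisection on $\alpha$ via the convex feasibility problems obtained by clearing the (nonnegative) denominator.
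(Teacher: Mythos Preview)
Your proposal is correct and follows essentially the same route as the paper's own proof: fix $\vec{u}$, write the sublevel set $S_\alpha$, use that $\nabla f_s(\vec{u}+\vec{1})$ and $f_s(\vec{u})$ are linear in $\vec{a}$ so that $f^*(\nabla f_s(\vec{u}+\vec{1}))$ is convex and the right-hand side is affine, and conclude that $S_\alpha$ is convex; then take the pointwise maximum over $\U$. Your treatment is in fact slightly more careful than the paper's, since you explicitly invoke the nonnegativity of the denominator (Assumption~\ref{assump:fs_seq}(5)) and address the degenerate case where it vanishes, points the paper leaves implicit.
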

This proof is very similar to that of Theorem \ref{thm:quasiconvex} and so the proof is provided in Appendix \ref{sec:proof_of_quasiconvex_seq_1}.

\section{Related Work}
\label{sec:related}

In this section, we review further related work at the intersection of online matching and combinatorial auctions.

\paragraph{Online Bipartite Matching.} Online bipartite matching \cite{Karp1990,Kalyanasundaram2000,Devanur2013,Kesselheim2013} is a classical problem that has been studied and reintroduced for many applications. Recently, the natural application of internet ad placement has caused a resurgence of online bipartite matching and its generalizations through the problem Adwords \cite{Mehta2007,Devanur2009}. In the Adwords problem, a search engine is trying to maximize revenue from a set of budget-constrained advertisers, who bid on queries arriving online. This problem was generalized to allow the revenue to be the sum of a concave function of the budget spent for each advertiser \cite{Devanur2012}. All of the aforementioned problems have a separable cumulative budget constraint that must be satisfied, and so the algorithm techniques of choosing the allocation as a function of the budget is not applicable for our problem. 

\paragraph{Primal-Dual Algorithms.} State-of-the-art techniques for Adwords, its generalizations, and related problems have been primal-dual algorithms \cite{Buchbinder2007,Buchbinder2009}. A primal-dual algorithm uses the dual problem formulation, and updates the dual variables in order to determine the values of the primal variables. The advantages of primal-dual algorithms are two-fold. Firstly, the analysis for competitive ratio of a primal-dual algorithm then decomposes into writing the dual objective of the algorithm in terms of the primal objective, since weak duality can then be used to connect the two (see the opening paragraph of our proof of Theorem \ref{thm:competitive_ratio}). Secondly, the dual variable may have a meaningful interpretation in how to determine the primal variable. We adapt the intuition for primal and dual variables from problems of profit maximization \cite{Balcan2008,Chawla2010}. Although these problems are different from our framework, \cite{Balcan2008} considers limited or unlimited supply of resources and \cite{Chawla2010} considers customers arriving from a known distribution, the interpretations of the primal and dual variables are key in developing our posted pricing mechanism in Section \ref{sec:posted_pricing}. In both  Algorithm \ref{alg:sim_update} and Algorithm \ref{alg:seq_update_offset}, our allocation rules comes naturally from realizing that the payment obtained must be greater than the additional production cost. The dual variable can then be interpreted as the price offered to the incoming buyer, as further discussed in Section \ref{sec:posted_pricing}.

This powerful tool of duality is best seen in online covering and packing problems \cite{Chan2015, Azar2016}. The offline covering problem can be written as:
\[\minimize_{\vec{x}\in\R^n_+}~f\fp{\vec{x}}\spaces\subjectto~ A\vec{x}\succeq\vec{1},\]
where $f$ is a non-negative increasing convex cost function and $A$ is an $m\times n$ matrix with non-negative entries. In the online problem, rows of $A$ come online and a feasible assignment $\vec{x}$ must be maintained at all times where $\vec{x}$ may only increase. The offline covering problem can be written as:
\[\maximize_{\vec{y}\in\R^m_+}~\sum_j y_j-f^*\fp{A^\top\vec{y}},\]
and in the online setting, columns of $A^\top$ arrive online upon which $y_t$ must be assigned. The packing problem is dual to the covering problem as the $j$-th entry of $\vec{y}$ corresponds to the $j$-th row of $A$. In the works of \cite{Chan2015} and \cite{Azar2016}, the authors use this duality to analyze similar algorithms proposed for each problem. In fact, the bulk of the results in \cite{Chan2015} are focused on the covering and packing problems, upon which the authors then adapt their results to the online resource allocation problem in Section 5 of their work. In this paper, we obtain stronger results for the online resource allocation problem by studying the problem directly rather than trying to adapt results from the related problem of online packing.

We share a similar perspective in this work with \cite{Eghbali2016}. The authors there study a generalization of Adwords in which the objective is a concave function and constraint sets and linear maps arrive online. There, the authors propose a convex optimization problem to design a surrogate function in order to improve the competitive ratio; however, the problem studied in \cite{Eghbali2016} is different from ours in the following ways: (1) the data coming online in \cite{Eghbali2016} is linear, whereas in our setting the payment functions arriving online are generally concave, and (2) the objective of the offline optimization problem in \cite{Eghbali2016} is a coupled term between allocations at different rounds, but in our objective, in equation \eqref{eq:primal}, we have a sum over decoupled terms representing the cumulative payment, as well as a coupled term in the procurement cost function. Since these key differences do not allow our problem to be mapped to that in \cite{Eghbali2016}, we must develop separate surrogate function design techniques based on the competitive ratio analysis for our problem.


\paragraph{Arrival Models.} Most of the online optimization problems analyzed with respect to competitive ratio are studied under three arrival models: (1) the worst-case/adversarial model, with no assumptions on how the requests arrive, (2) the random order model, where the set of requests is arbitrary but the order of arrival is uniformly random, and (3) the independently and identically distributed (IID) model, where the requests are IID samples from an underlying distribution. For a more in depth survey, see Section 2.2 in \cite{Mehta2013}. Our setting is that of the worst-case model. The key approach to problems in the worst-case model is for the decision maker to apply a greedy algorithm which maximizes a function of how much revenue can be immediately gained versus how much revenue may be achieved later. In doing so, the decision maker must be cautious in spending budget or accumulating a resource which may be better consumed in the future. This decision making strategy connects loosely to the ideas of regularization for online optimization problems in the regret metric as seen in classical algorithms such as follow the regularized leader \cite{McMahan2011} and multiplicative weights \cite{Littlestone1994}. A key difference however from the regret setting to the competitive ratio setting, is that in the regret setting, regularization aims to keep the gap between the current and previous decision small, whereas in the competitive ratio setting, the regularization is used to make cautious decisions in order to protect resources which may obtain more value if used in future allocations. 

In the random order model, the typical approach is to have an exploration period, where the decision maker learns about the distribution of the arriving requests, followed by an exploitation period in which the decision maker uses this knowledge to maximize their revenue. This is most clearly seen in the classical secretary problem described in \cite{Chow1964} in which a set of candidates arrive one by one for an open job position, and the manager must hire or reject the candidate before interviewing future candidates. Adwords is studied in the random order model in \cite{Devanur2009} and the algorithm proposed uses the same technique of initial exploration, in which the bids on the first few queries are used to learn weights on the bidders used to select the allocation, and an exploitation period, in which these weights are applied to future queries to make the assignment. Similar strategies are used for generalizations of Adwords such as online linear programming \cite{Agrawal2014, Agrawal2014Devanur} and profit maximization subject to convex costs \cite{Gupta2018}. The key difference between the random order model and our setting of the worst case model, is that previous customers tell us nothing about future customers, and so we forgo learning about our customers, and focus solely on cautiously allocating our resources.

\paragraph{Online Combinatorial Auctions.} In many related works, our problem of online resource allocation has been titled \textit{online combinatorial auctions}. Online combinatorial auctions have been studied in the setting with fixed resource capacities, i.e., there is a hard budget constraint for each resource \cite{Blumrosen2007,Balcan2008,Chakraborty2013} and in the setting with unlimited resource supplies, in which additional resources can be acquired at no cost \cite{Balcan2005,Balcan2008}. Our setting falls in between these; resources can be acquired or developed following a procurement cost. This problem was proposed by \cite{Blum2011} for separable procurement cost functions in the worst-case arrival model. \cite{Blum2011} devised a posted pricing mechanism, in which customers wanting to purchase the $k$-th copy of any item would be charged a price equal to the procurement cost of the $2k$-th copy of that item. \cite{Huang2018} build on this result by characterizing the competitive ratio of optimal algorithms in this setting for a wide range of separable procurement costs as the solution to a differential equation. Our framework looks to generalize this setting by considering non-separable production cost functions. Additionally, we bring an optimization viewpoint to this setting in which we use (quasi-)convex optimization to design the best surrogate function, rather than restricting ourselves to a small function class as do these papers. 
\section{Conclusion \& Future Directions}
In this paper, we studied the broad online optimization framework of online resource allocation with procurement costs. We analyzed the competitive ratio for a primal-dual algorithm and showed how we can design a surrogate function in order to improve the competitive ratio. We proposed two techniques to design or shape the surrogate function. The first technique, discussed in Section \ref{sec:polynomial_surrogate}, addressed the case of polynomial cost functions and 
determined a closed-form choice for the scalar design parameter, that guarantees a competitive ratio of at least $\tau^{-\tau/\paren{\tau-1}}$ where $\tau$ is the largest cumulative degree of a single term in the polynomial. This bound is optimal from a result in \cite{Huang2018} (Theorem 10).
The second technique, discussed in Section \ref{sec:surrogate_general}, considered a general class of procurement cost functions and relied on an optimization problem which is quasiconvex in the design parameters to determine a surrogate function. This allowed us to further improve the competitive ratio at a higher computational cost.
In Section \ref{sec:numerical} we investigated the surrogate function arising from each design technique for numerical examples.


As a future direction, we aim to generalize Theorem \ref{thm:quasiconvex} to allow
a much larger class of functions for the design of the surrogate. 
We will also investigate which choice of $g_n$ would lead to optimal smoothing for a certain class of $f$.
Future steps also include a modified analysis that would allow more flexibility in $f$ but make more assumptions on the arriving inputs. 
Additionally, practically motivated assumptions on the structure of the incoming payment functions might lead to competitive ratio results for Algorithm \ref{alg:sim_update} that will not approach zero if $f_s$ is close to $f$. 
Furthermore, the different assumptions on the input order such as the random order model may be more suitable for certain applications, and competitive analysis in this regime has yet to be studied for this problem. In addition, different assumptions on the procurement cost function may be better 
suited for applications where the procurement cost functions satisfy gradient increasing, i.e., Assumption \ref{assump:f}(3) (continuous supermodular functions), but are not necessarily convex \cite{Sadeghi2019, Sadeghi2019Eghbali}.


\bibliographystyle{alpha}
\bibliography{refs}

\appendix

\section{Computation of Dual Problem}
\label{sec:computing_dual}

\begin{align*}
P^\star&=\maximize_{\vec{0}\preceq \vec{x}_t\preceq\vec{1}}~\sum_{t=1}^Tv_t\fp{\vec{x}_t} - f\fp{\sum_{t=1}^T\vec{x}_t}
\\&= \maximize_{\vec{0}\preceq \vec{x}_t\preceq\vec{1}, \vec{y},\vec{w}_t}~ \sum_{t=1}^Tv_t\fp{\vec{w}_t} - f\fp{\vec{y}}
\\& \spaces ~\text{subject to }~\vec{y}\succeq \sum_{t=1}^T\vec{x}_t
\\& \spaces \spaces \spaces \spaces \spaces \spaces~ \vec{w_t}\preceq \vec{x}_t \spforall t\in\bracket{T}
\\&= \maximize_{\vec{0}\preceq \vec{x}_t\preceq\vec{1}, \vec{y},\vec{w_t}}~\minimize_{\boldsymbol\lambda \succeq \vec{0},\vec{z}_t\succeq\vec{0}}~\sum_{t=1}^T\paren{v_t\fp{\vec{w}_t} - f\fp{\vec{y}} + \vec{z}^\top\!\paren{\vec{x}_t-\vec{w}_t}} - f\fp{\vec{y}} + \boldsymbol\lambda^\top\!\paren{\vec{y}-\sum_{t=1}^T\vec{x}_t}
\\&\leq \minimize_{\boldsymbol\lambda \succeq \vec{0},\vec{z}_t\succeq\vec{0}}~ \paren{\maximize_{\vec{0}\preceq\vec{x}_t\preceq\vec{1}}~\sum_{t=1}^T\paren{\vec{z}_t-\boldsymbol{\lambda}}^\top\vec{x}_t} - \sum_{t=1}^T\paren{\minimize_{\vec{w}_t}~\vec{z}_t^\top\vec{w}_t-v_t\fp{\vec{w}_t}} + \paren{\maximize_{\vec{y}}~\boldsymbol{\lambda}^\top\vec{y}-f\fp{\vec{y}}}
\\&= \minimize_{\boldsymbol\lambda \succeq \vec{0},\vec{z}_t\succeq\vec{0}}~\sum_{t=1}^T\sum_{d=1}^D \max\!\set{\bracket{\vec{z}_t}_d-\bracket{\boldsymbol{\lambda}}_d, 0} - \sum_{t=1}^Tv_{t*}\fp{\vec{z}_t} + f^*\fp{\boldsymbol{\lambda}} = D^{\star}
\end{align*}

The inequality comes from weak duality. From the KKT (Karush-Kuhn-Tucker) conditions, we know that solving for the optimal value of $\vec{y}$ following the inequality comes from taking the gradient with respect to $\vec{y}$, and setting this equal to $0$. Solving this equation gives us $\boldsymbol{\lambda}^{\star} = \nabla f\fp{\vec{y}^\star}$. Since we know from our construction of $\vec{y}$ that $\vec{y}^\star = \sum_{t=1}^T\vec{x}_t^\star$, we can conclude that $\boldsymbol{\lambda}^\star = \nabla f\fp{\sum_{t=1}^T\vec{x}_t^\star}$. Similarly, solving for $\vec{w}_t$ gives us $\vec{z}_t^\star=\nabla v_t\fp{\vec{w}_t^\star}$ and from our construction of $\vec{w}_t$ that $\vec{w}_t^\star=\vec{x}_t^\star$, we can conclude that $\vec{z}_t^\star=\nabla v_t\fp{\vec{x}_t^\star}$. Our optimal values are then
\begin{align*}
\vec{x}_t^\star &= \1{\vec{z}_t^\star - \boldsymbol{\lambda}^\star \succeq \vec{0}},\\
\boldsymbol{\lambda}^\star &= \nabla f\fp{\sum_{t=1}^T \vec{x}_t^\star},\\
\vec{z}_t^\star &=\nabla v_t\fp{\vec{x}_t^\star}.
\end{align*}

\section{Missing Proofs in Section \ref{sec:surrogate}}

\subsection{Proof of Lemma \ref{lemma:optimal_rho}}
\label{sec:proof_of_optimal_rho}
Since $\paren{\tau-1}$ is a constant, we disregard it from the objective of the optimization problem, when we solve. We solve the optimization problem for $\rho$ by computing the first derivative,
\[\frac{d}{d\rho}\paren{\frac{\rho^{\tau}}{\rho^{\tau-1}-1}}=\tau\paren{\rho^{\tau-1}-1}-\paren{\tau-1}\rho^{\tau-1},\]
and setting it equal to $0$. Solving for $\rho$ gives us $\rho^{\star}=\tau^{1/\paren{\tau-1}}$.

Now, it suffices to show that $\frac{\rho^{\tau}}{\rho^{\tau-1}-1}$ is convex for $\rho>1$ and $\tau\geq 2$. We do this by computing the second derivative, \[\frac{d^2}{d\rho^2}\paren{\frac{\rho^{\tau}}{\rho^{\tau-1}-1}} = \frac{\paren{\tau-1}\rho^\tau\paren{-2\rho^\tau+\tau\paren{\rho+\rho^\tau}}}{\paren{\rho^\tau-\rho}^3},\]
and verifying that it is greater than or equal to $0$. Since $\rho,\tau>1$, we know that \[\frac{\paren{\tau-1}\rho^{\tau+1}}{\paren{\rho^\tau-\rho}^3}\geq 0,\] and so we can remove it from the expression. Now, it suffices to show that
\[-2\rho^{\tau-1}+\tau\paren{1+\rho^{\tau-1}}\geq 0.\]
This inequality can be rewritten as $\tau\geq\paren{2-\tau}\rho^{\tau-1}$ which is trivially true for $\tau\geq 2$.

\subsection{Proof of Lemma \ref{lemma:positive_numerator}}
\label{sec:proof_of_positive_numerator}
Rearranging the inequality gives us
\[b\frac{\rho^b}{\rho^b-1}\geq a\frac{\rho^a}{\rho^a-1}.\]
To show that this holds for $0\leq a\leq b$, it suffices to show that for all $\rho>1$,
\[f\fp{x}:=x\frac{\rho^x}{\rho^x-1}\]
is monotonically increasing for $x>0$ for all $\rho>1$. We show this by checking that $f'\fp{x}\geq 0$. We first rewrite $f\fp{x}$ as follows:
\[f\fp{x}=x\frac{e^{\alpha x}}{e^{\alpha x}-1} = \frac{x}{1-e^{-\alpha x}},\]
where $\alpha = \log \rho > 0$. Then, computing the derivative gives us
\[f'\fp{x}=\frac{\paren{1-e^{-\alpha x}}-\alpha xe^{-\alpha x}}{\paren{1-e^{-\alpha x}}^2}=\frac{e^{-\alpha x}\paren{e^{\alpha x} - 1 - \alpha x}}{\paren{1-e^{-\alpha x}}^2}.\]
Since $e^{\alpha x}\geq 1+\alpha x$ for all $\alpha \geq 0$ and $x\geq 0$, this implies that the numerator, and therefore $f'\fp{x}$, is non-negative.

\section{Pseudocode for Quasiconvex Optimization}
\label{sec:pseudocode_quasiconvex}
We use the notation $\vec{a}_{\alpha}$ to denote a choice of $\vec{a}$ such that Problem \eqref{eq:feasibility} is feasible for this choice of $\alpha$. We then perform binary search to find the smallest value of $\alpha$ such that Problem \eqref{eq:feasibility} is feasible. The algorithm is formalized in Algorithm \ref{alg:binary_search}.

\begin{algorithm}
\KwData{$\epsilon>0, \alpha_{\text{upper}}>1$}
$\alpha_{\text{lower}}\leftarrow 1$\;
 \While{$\alpha_{\text{upper}} - \alpha_{\text{lower}}>\epsilon$}{
 $\alpha \leftarrow \frac{1}{2}\paren{\alpha_{\text{upper}} + \alpha_{\text{lower}}}$\;
  solve Problem \eqref{eq:feasibility} with $\alpha$\;
  \eIf{Problem \eqref{eq:feasibility} is feasible}{
    $\alpha_{\text{upper}}\leftarrow \alpha$\;
    }{
    $\alpha_{\text{lower}}\leftarrow \alpha$\;
    }
  }
  \KwRet{$\vec{a}_{\alpha_{\text{upper}}}$}
 \caption{Quasiconvex Optimization as Convex Feasibility Problems}\label{alg:binary_search}
\end{algorithm}

\section{Missing Proofs in Section \ref{sec:posted_pricing}}
We first introduce the following lemma, which is relevant for the proofs in this section.
\begin{lemma}
\label{lemma:v_optimality}
Let $v:\R^D_+\rar\R_+$ satisfy Assumption \ref{assump:v} and $\boldsymbol{\lambda}\in\R^D_+$. The optimality conditions of
\[\maximize_{\vec{0}\preceq\vec{x}\preceq\vec{1}} ~v\fp{\vec{x}} - \boldsymbol{\lambda}^\top\vec{x}\]
imply that $\vec{x}^\star=\1{\nabla v\fp{\vec{x}^\star}-\boldsymbol{\lambda}\succeq\vec{0}}$.
\end{lemma}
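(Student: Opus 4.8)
\textbf{Proof plan for Lemma \ref{lemma:v_optimality}.} The plan is to analyze the KKT (first-order optimality) conditions of the box-constrained concave maximization problem $\max_{\vec{0}\preceq\vec{x}\preceq\vec{1}} v\fp{\vec{x}} - \boldsymbol{\lambda}^\top\vec{x}$ and read off the claimed characterization coordinate by coordinate. Since $v$ is concave and differentiable (Assumption \ref{assump:v}(1)) and the feasible set $[0,1]^D$ is a compact convex polytope, the problem is a concave program with linear constraints, so a feasible point $\vec{x}^\star$ is optimal if and only if it satisfies the KKT conditions; equivalently, $\vec{x}^\star$ is optimal iff $\inprod{\nabla v\fp{\vec{x}^\star}-\boldsymbol{\lambda}}{\vec{x}-\vec{x}^\star}\leq 0$ for all feasible $\vec{x}$. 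Because the feasible set is a product of intervals, this variational inequality decouples across coordinates $d\in\bracket{D}$.

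First I would introduce multipliers $\boldsymbol{\mu}\succeq\vec{0}$ for the constraints $\vec{x}\succeq\vec{0}$ and $\boldsymbol{\nu}\succeq\vec{0}$ for $\vec{x}\preceq\vec{1}$, and write stationarity as $\nabla v\fp{\vec{x}^\star} - \boldsymbol{\lambda} + \boldsymbol{\mu} - \boldsymbol{\nu} = \vec{0}$ together with complementary slackness $\mu_d x^\star_d = 0$ and $\nu_d\paren{1-x^\star_d} = 0$ for each $d$. Then I would do the coordinatewise case analysis: if $\bracket{\nabla v\fp{\vec{x}^\star}}_d - \bracket{\boldsymbol{\lambda}}_d > 0$, stationarity forces $\nu_d > \mu_d \geq 0$, so $\nu_d > 0$, hence by complementary slackness $x^\star_d = 1$; if $\bracket{\nabla v\fp{\vec{x}^\star}}_d - \bracket{\boldsymbol{\lambda}}_d < 0$, symmetrically $\mu_d > 0$ and $x^\star_d = 0$. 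This is exactly $x^\star_d = \1{\bracket{\nabla v\fp{\vec{x}^\star}}_d - \bracket{\boldsymbol{\lambda}}_d \geq 0}$ in all cases, which is the claim (the boundary case $\bracket{\nabla v\fp{\vec{x}^\star}}_d = \bracket{\boldsymbol{\lambda}}_d$ is consistent with the indicator convention $\1{\cdot\succeq\vec{0}}$ used in the paper, and any such coordinate value is optimal). I would note that Assumption \ref{assump:v}(2)–(3) are not strictly needed for this lemma beyond what is already implied by concavity and differentiability, though they guarantee the problem is well posed.

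The only mild subtlety — and the place I would be most careful — is the handling of the boundary tie $\bracket{\nabla v\fp{\vec{x}^\star}}_d = \bracket{\boldsymbol{\lambda}}_d$: here both $\mu_d = \nu_d = 0$ is feasible and $x^\star_d$ can be anything in $[0,1]$, so the statement $\vec{x}^\star = \1{\nabla v\fp{\vec{x}^\star}-\boldsymbol{\lambda}\succeq\vec{0}}$ should be read as asserting that \emph{this particular binary vector is one such optimizer} (the one selected by the algorithm), consistent with how \eqref{eq:primal_decision} and the dual computation in Appendix \ref{sec:computing_dual} treat the analogous decision. Everything else is a routine unwinding of KKT conditions for a box-constrained concave program, so there is no real obstacle; the proof is short.
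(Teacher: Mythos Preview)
Your proof is correct but takes a different route from the paper. The paper does not apply KKT directly to the box-constrained problem; instead it introduces an auxiliary variable $\vec{z}\preceq\vec{x}$, uses the monotonicity of $v$ (Assumption \ref{assump:v}(2)) to argue the reformulated problem is equivalent, dualizes the constraint $\vec{z}\preceq\vec{x}$ with a multiplier $\vec{w}\succeq\vec{0}$, and then reads off $\vec{w}^\star=\nabla v\fp{\vec{z}^\star}$ from stationarity in $\vec{z}$ and $\vec{x}^\star=\1{\vec{w}^\star-\boldsymbol{\lambda}\succeq\vec{0}}$ from the now-linear maximization in $\vec{x}$, finally using $\vec{z}^\star=\vec{x}^\star$. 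Your direct KKT argument with multipliers $\boldsymbol{\mu},\boldsymbol{\nu}$ for the two box constraints is shorter and, as you note, does not actually require monotonicity of $v$. What the paper's detour buys is a derivation that exactly parallels the dual computation in Appendix \ref{sec:computing_dual}: the auxiliary multiplier $\vec{w}$ is precisely the $\vec{z}_t$ variable appearing in \eqref{eq:dual} and throughout the analysis, so their proof simultaneously explains why $\bar{\vec{z}}_t=\nabla v_t\fp{\bar{\vec{x}}_t}$ is the right dual assignment in Algorithm \ref{alg:seq_update_offset}. Your handling of the tie case is also more explicit than the paper's, which leaves the same ambiguity implicit in the linear maximization step.
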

\begin{proof}
We first re-write the optimization problem by introducing additional constraints, and then apply the KKT conditions.
\begin{align*}
\maximize_{\vec{0}\preceq\vec{x}\preceq\vec{1}}~v\fp{\vec{x}}-\boldsymbol{\lambda}^\top\vec{x}
&= \maximize_{\vec{0}\preceq\vec{x}\preceq\vec{1},\vec{z}}~ v\fp{\vec{z}}-\boldsymbol{\lambda}^\top\vec{x}
\\&\spaces~\text{subject to}~\vec{z}\preceq\vec{x}
\\&= \maximize_{\vec{0}\preceq\vec{x}\preceq\vec{1},\vec{z}}~\minimize_{\vec{w}\succeq\vec{0}}~ v\fp{\vec{z}}-\boldsymbol{\lambda}^\top\vec{x} + \vec{w}^\top\paren{\vec{x}-\vec{z}}
\\&= \maximize_{\vec{0}\preceq\vec{x}\preceq\vec{1},\vec{z}}~\minimize_{\vec{w}\succeq\vec{0}}~\paren{\vec{w}-\boldsymbol{\lambda}}^\top\vec{x} + v\fp{\vec{z}}-\vec{w}^\top\vec{z}.
\end{align*}
The first equality comes from the assumption that $v$ is increasing. Taking the gradient of the Lagrangian with respect to $\vec{z}$ gives the optimality condition: $\nabla v\fp{\vec{z}^\star}-\vec{w}^\star=\vec{0}$. This immediately implies that $\vec{w}^\star=\nabla v\fp{\vec{z}^\star}$. Now, solving the maximization over $\vec{x}$, i.e., $\maximize_{\vec{0}\preceq\vec{x}\preceq\vec{1}}\paren{\vec{w}^\star-\boldsymbol{\lambda}}^\top\vec{x}$, leads to $\vec{x}^\star=\1{\nabla v\fp{\vec{z}^\star}-\boldsymbol{\lambda} \succeq\vec{0}}$. From our constraint on $\vec{z}$, i.e., $\vec{z}\preceq\vec{x}$, we know that $\vec{z}^\star=\vec{x}^\star$ at optimality, and thus plugging this into the threshold rule for $\vec{x}^\star$ gives $\vec{x}^\star=\1{\nabla v\fp{\vec{x}^\star}-\boldsymbol{\lambda} \succeq\vec{0}}$.
\end{proof}

\subsection{Proof of Theorem \ref{thm:competitive_ratio_seq_0}}
\label{sec:proof_of_competitive_ratio_seq_0}
In this subsection, we analyze Algorithm \ref{alg:seq_update_offset}, called with $f_s$ satisfying Assumption \ref{assump:fs}, and $\vec{v}_{\text{offset}}=\vec{0}$. We define the following quantities:
\[P^{s0}:=\sum_{t=1}^Tv_t\fp{\bar{\vec{x}}_t} - f\fp{\sum_{t=1}^T\bar{\vec{x}}_t},\]
\[D^{s0}:=\sum_{t=1}^T\sum_{d=1}^D\max\set{\bracket{\bar{\vec{z}}_t}_d-\bracket{\bar{\boldsymbol{\lambda}}_t}_d,0} - \sum_{t=1}^Tv_{t*}\fp{\bar{\vec{z}}_t} + f^*\fp{\bar{\boldsymbol{\lambda}}_T}.\]
Note that $P^{s0}$ and $D^{s0}$ are identical to their counterparts, $P^s$ and $D^s$ introduced in Section \ref{sec:algorithm}, with the only difference being that for $P^{s0}$ and $D^{s0}$, $\bar{\vec{x}}_t$, $\bar{\boldsymbol{\lambda}}_t$, and $\bar{\vec{z}}_t$ come from Algorithm \ref{alg:seq_update_offset}.
For notational simplicity, we define
\begin{align*}
L_1\fp{g,\set{\vec{x}_t}_{t=1}^T} &:= \sum_{t=1}^T\paren{\nabla g\fp{\sum_{i=1}^t\vec{x}_i}-\nabla g\fp{\sum_{i=1}^{t-1}\vec{x}_i}}^\top\vec{x}_t,
\\L_2\fp{g,\set{\vec{x}_t}_{t=1}^T} &:= \vec{1}^\top\!\paren{\nabla g\fp{\sum_{t=1}^T\vec{x}_t}-\nabla g\fp{\vec{0}}}.
\end{align*}
We see that $L_1\fp{g,\set{\vec{x}_t}_{t=1}^T} \leq L_2\fp{g,\set{\vec{x}_t}_{t=1}^T}$ for all $g$ satisfying Assumption \ref{assump:fs} and for $\vec{0}\preceq\vec{x}_t\preceq\vec{1}$ for all $t\in\bracket{T}$, by upper bounding $\vec{x}_t$ by $\vec{1}$ and performing the telescoping sum, since $g$ is assumed to have an increasing gradient.
We first show the following lemmas which aid in our analysis of the competitive ratio of Algorithm \ref{alg:seq_update_offset}.
\begin{lemma}
\label{lemma:dsdstar_seq_0}
If $f_s$ is increasing and $f_s$ has an increasing gradient, then $D^\star\leq D^{s0}$.
\end{lemma}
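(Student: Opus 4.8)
The plan is to mimic the corresponding step in the proof of Theorem \ref{thm:competitive_ratio}, where the dual objective $D^s$ evaluated at the running dual iterates is lower-bounded by the optimal dual value $D^\star$ of \eqref{eq:dual} by monotonicity of $\bar{\boldsymbol{\lambda}}_t$ in $t$. The key structural fact we need is that the sequence of dual variables produced by Algorithm \ref{alg:seq_update_offset} is coordinatewise nondecreasing: $\bar{\boldsymbol{\lambda}}_t \preceq \bar{\boldsymbol{\lambda}}_T$ for every $t \in [T]$. Since in Algorithm \ref{alg:seq_update_offset} (called with $\vec{v}_{\text{offset}} = \vec{0}$) we have $\bar{\boldsymbol{\lambda}}_t = \nabla f_s(\sum_{i=1}^{t-1}\bar{\vec{x}}_i)$, and since $\bar{\vec{x}}_i \succeq \vec{0}$ for all $i$, the partial sums $\sum_{i=1}^{t-1}\bar{\vec{x}}_i$ are nondecreasing in $t$ with respect to the componentwise order; then Assumption \ref{assump:fs}(3) (the gradient of $f_s$ is increasing) gives $\nabla f_s(\sum_{i=1}^{t-1}\bar{\vec{x}}_i) \preceq \nabla f_s(\sum_{i=1}^{T-1}\bar{\vec{x}}_i) = \bar{\boldsymbol{\lambda}}_T$, which is the claimed monotonicity.

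With this in hand, I would write out $D^{s0}$ from its definition and bound each term of the first (positive-part) sum using $\bracket{\bar{\boldsymbol{\lambda}}_t}_d \leq \bracket{\bar{\boldsymbol{\lambda}}_T}_d$, so that $\max\{\bracket{\bar{\vec{z}}_t}_d - \bracket{\bar{\boldsymbol{\lambda}}_t}_d, 0\} \geq \max\{\bracket{\bar{\vec{z}}_t}_d - \bracket{\bar{\boldsymbol{\lambda}}_T}_d, 0\}$ since $x \mapsto \max\{c - x, 0\}$ is nonincreasing. The remaining terms, $-\sum_t v_{t*}(\bar{\vec{z}}_t)$ and $f^*(\bar{\boldsymbol{\lambda}}_T)$, are unchanged. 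Hence
\[
D^{s0} \geq \sum_{t=1}^T\sum_{d=1}^D \max\set{\bracket{\bar{\vec{z}}_t}_d - \bracket{\bar{\boldsymbol{\lambda}}_T}_d, 0} - \sum_{t=1}^T v_{t*}(\bar{\vec{z}}_t) + f^*(\bar{\boldsymbol{\lambda}}_T).
\]
The right-hand side is a feasible point of the minimization defining $D^\star$ in \eqref{eq:dual} — one takes $\boldsymbol{\lambda} = \bar{\boldsymbol{\lambda}}_T \succeq \vec{0}$ (here using that $f_s$ is increasing, so $\nabla f_s \succeq \vec{0}$ and thus $\bar{\boldsymbol{\lambda}}_T \succeq \vec{0}$, which is where the ``$f_s$ increasing'' hypothesis enters) and $\vec{z}_t = \bar{\vec{z}}_t \succeq \vec{0}$ — so it is at least $D^\star$. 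Chaining the two inequalities yields $D^{s0} \geq D^\star$, as desired.

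I do not expect a serious obstacle here; the lemma is essentially a transcription of the monotonicity argument already carried out inside the proof of Theorem \ref{thm:competitive_ratio}, with $\bar{\boldsymbol{\lambda}}_t$ now indexed by the partial sum through $t-1$ rather than through $t$ (because Algorithm \ref{alg:seq_update_offset} sets the price before seeing $v_t$). The only point requiring a moment of care is making sure the index shift does not break the telescoping/monotonicity — it does not, since $\sum_{i=1}^{t-1}\bar{\vec{x}}_i \preceq \sum_{i=1}^{T-1}\bar{\vec{x}}_i$ still holds — and that the two stated hypotheses ($f_s$ increasing, $\nabla f_s$ increasing) are exactly what is used, the former for dual feasibility ($\bar{\boldsymbol{\lambda}}_T \succeq \vec{0}$) and the latter for the coordinatewise ordering of the $\bar{\boldsymbol{\lambda}}_t$.
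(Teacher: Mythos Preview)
Your proposal is correct and follows essentially the same argument as the paper: use the increasing-gradient assumption on $f_s$ to obtain $\bar{\boldsymbol{\lambda}}_t \preceq \bar{\boldsymbol{\lambda}}_T$, replace $\bar{\boldsymbol{\lambda}}_t$ by $\bar{\boldsymbol{\lambda}}_T$ in the positive-part terms, and then observe that the resulting expression is a feasible value of the dual minimization \eqref{eq:dual}, with feasibility of $\bar{\boldsymbol{\lambda}}_T \succeq \vec{0}$ coming from $f_s$ increasing and $\bar{\vec{z}}_t \succeq \vec{0}$ from $v_t$ increasing. The only point you leave implicit is the justification that $\bar{\vec{z}}_t = \nabla v_t(\bar{\vec{x}}_t) \succeq \vec{0}$, which the paper spells out via Assumption \ref{assump:v}(2).
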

\begin{proof}
From the assumption that $f_s$ has an increasing gradient, we know that \[\nabla f_s\fp{\sum_{i=1}^{t-1}\bar{\vec{x}}_i} \preceq \nabla f_s\fp{\sum_{i=1}^{T-1}\bar{\vec{x}}_i}\] for all $t\in\bracket{T}$ since $\bar{\vec{x}}_i\succeq\vec{0}$ for all $i\in\bracket{T}$. This, in turn, implies that $\bar{\boldsymbol{\lambda}}_t \preceq \bar{\boldsymbol{\lambda}}_T$ for all $t\in\bracket{T}$ so that
\begin{align*}
D^{s0} &= \sum_{t=1}^T \sum_{d=1}^D\max\set{\bracket{\bar{\vec{z}}_t}_d-\bracket{\bar{\boldsymbol{\lambda}}_t}_d, 0} - \sum_{t=1}^Tv_{t*}\fp{\bar{\vec{z}}_t} + f^*\fp{\bar{\boldsymbol{\lambda}}_T} \\&
\geq \sum_{t=1}^T \sum_{d=1}^D\max\set{\bracket{\bar{\vec{z}}_t}_d-\bracket{\bar{\boldsymbol{\lambda}}_T}_d, 0} - \sum_{t=1}^Tv_{t*}\fp{\bar{\vec{z}}_t} + f^*\fp{\bar{\boldsymbol{\lambda}}_T} \geq D^\star.
\end{align*}
In order for the final inequality to hold, $\bar{\boldsymbol{\lambda}}_T$ and $\set{\bar{\vec{z}}_t}_{t=1}^T$ must be feasible points for the optimization problem defining $D^\star$, i.e., $\bar{\boldsymbol{\lambda}}_T\succeq\vec{0}$ and $\bar{\vec{z}}_t\succeq\vec{0}$ for all $t\in\bracket{T}$. Since $f_s$ is increasing by assumption, we know that $f_s$ has a non-negative gradient, and similarly since $v_t$ is increasing by assumption, we know that $v_t$ has a non-negative gradient.
\end{proof}

\begin{lemma}
\label{lemma:psds_negative_seq_0}
If $f_s$ is increasing and $f_s$ has an increasing gradient, then $P^{s0}\leq D^{s0}$.
\end{lemma}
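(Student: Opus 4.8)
The plan is to follow the duality-gap argument from the proof of Theorem~\ref{thm:competitive_ratio}, adapted to the fact that Algorithm~\ref{alg:seq_update_offset} run with $f_s$ and $\vec{v}_{\text{offset}}=\vec{0}$ sets the dual variable to $\bar{\boldsymbol{\lambda}}_t=\nabla f_s\fp{\sum_{i=1}^{t-1}\bar{\vec{x}}_i}$, i.e., using the cumulative allocation \emph{before} round $t$, rather than $\nabla f_s\fp{\sum_{i=1}^{t}\bar{\vec{x}}_i}$ as in Algorithm~\ref{alg:sim_update}. Since $f_s$ is increasing, $\bar{\boldsymbol{\lambda}}_t=\nabla f_s\fp{\cdot}\succeq\vec{0}$, so Lemma~\ref{lemma:v_optimality} applies to the subproblem defining $\bar{\vec{x}}_t$ and yields $\bar{\vec{x}}_t=\1{\bar{\vec{z}}_t-\bar{\boldsymbol{\lambda}}_t\succeq\vec{0}}$ with $\bar{\vec{z}}_t=\nabla v_t\fp{\bar{\vec{x}}_t}$. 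Using this, the per-coordinate hinge terms in $D^{s0}$ collapse exactly as in step~(a) of the proof of Theorem~\ref{thm:competitive_ratio}: $\sum_{d=1}^{D}\max\set{\bracket{\bar{\vec{z}}_t}_d-\bracket{\bar{\boldsymbol{\lambda}}_t}_d,0}=\paren{\bar{\vec{z}}_t-\bar{\boldsymbol{\lambda}}_t}^\top\bar{\vec{x}}_t$; then substituting $\bar{\vec{z}}_t=\nabla v_t\fp{\bar{\vec{x}}_t}$ and applying the concave Fenchel--Young equality~\eqref{eq:fenchel-young-concave} with $g=v_t$ and $\vec{u}=\bar{\vec{x}}_t$ turns $\nabla v_t\fp{\bar{\vec{x}}_t}^\top\bar{\vec{x}}_t-v_{t*}\fp{\bar{\vec{z}}_t}$ into $v_t\fp{\bar{\vec{x}}_t}$. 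The outcome is
\[
D^{s0}=\sum_{t=1}^{T}v_t\fp{\bar{\vec{x}}_t}-\sum_{t=1}^{T}\nabla f_s\fp{\sum_{i=1}^{t-1}\bar{\vec{x}}_i}^\top\bar{\vec{x}}_t+f^*\fp{\bar{\boldsymbol{\lambda}}_T}.
\]

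Next I would bring in the true cost $f$ via the Fenchel--Young inequality~\eqref{eq:fenchel} for $f$ at $\vec{u}=\sum_{t=1}^{T}\bar{\vec{x}}_t$ and $\vec{v}=\bar{\boldsymbol{\lambda}}_T$, namely $f^*\fp{\bar{\boldsymbol{\lambda}}_T}\geq\bar{\boldsymbol{\lambda}}_T^\top\!\paren{\sum_{t=1}^{T}\bar{\vec{x}}_t}-f\fp{\sum_{t=1}^{T}\bar{\vec{x}}_t}$. Substituting this into the display above and recognizing $P^{s0}=\sum_{t=1}^{T}v_t\fp{\bar{\vec{x}}_t}-f\fp{\sum_{t=1}^{T}\bar{\vec{x}}_t}$ gives
\[
D^{s0}\;\geq\;P^{s0}+\sum_{t=1}^{T}\paren{\bar{\boldsymbol{\lambda}}_T-\nabla f_s\fp{\sum_{i=1}^{t-1}\bar{\vec{x}}_i}}^\top\bar{\vec{x}}_t .
\]
It then remains to check that the correction sum is nonnegative. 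Because $\bar{\boldsymbol{\lambda}}_T=\nabla f_s\fp{\sum_{i=1}^{T-1}\bar{\vec{x}}_i}$ and each $\bar{\vec{x}}_i\succeq\vec{0}$, we have $\sum_{i=1}^{t-1}\bar{\vec{x}}_i\preceq\sum_{i=1}^{T-1}\bar{\vec{x}}_i$ for every $t\in\bracket{T}$, so the increasing-gradient hypothesis on $f_s$ gives $\nabla f_s\fp{\sum_{i=1}^{t-1}\bar{\vec{x}}_i}\preceq\bar{\boldsymbol{\lambda}}_T$; pairing this coordinatewise-nonnegative vector with $\bar{\vec{x}}_t\succeq\vec{0}$ makes each summand nonnegative, whence $D^{s0}\geq P^{s0}$. (This is essentially the same mechanism behind the inequality $L_1\paren{f_s,\set{\bar{\vec{x}}_t}_{t=1}^{T}}\leq L_2\paren{f_s,\set{\bar{\vec{x}}_t}_{t=1}^{T}}$ recorded earlier in this subsection.)

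The one delicate point is precisely this off-by-one index in $\bar{\boldsymbol{\lambda}}_t$: unlike in the proof of Theorem~\ref{thm:competitive_ratio}, one cannot telescope $\sum_{t}\nabla f_s\fp{\sum_{i=1}^{t-1}\bar{\vec{x}}_i}^\top\bar{\vec{x}}_t$ against convexity of $f_s$ to reach $f_s\fp{\sum_{t}\bar{\vec{x}}_t}$ and then finish by $f_s\geq f$, since that replacement would point the inequality the wrong way. Routing instead through Fenchel--Young for $f$ and absorbing the discrepancy $\bar{\boldsymbol{\lambda}}_T-\nabla f_s\fp{\sum_{i=1}^{t-1}\bar{\vec{x}}_i}\succeq\vec{0}$ via the increasing gradient of $f_s$ is what makes the bound go through; this same discrepancy is why the competitive-ratio denominator in Theorem~\ref{thm:competitive_ratio_seq_0} carries the extra term $\vec{1}^\top\!\paren{\nabla f_s\fp{\vec{u}}-\nabla f_s\fp{\vec{0}}}$ that is absent in Theorem~\ref{thm:competitive_ratio}. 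Note the argument uses only the two stated hypotheses on $f_s$ (increasing --- needed so that $\bar{\boldsymbol{\lambda}}_t\succeq\vec 0$ and the threshold decision rule holds --- and increasing gradient) together with Assumption~\ref{assump:v} on the $v_t$; neither $f_s\fp{\vec{0}}=0$ nor Assumption~\ref{assump:fs}(5) is invoked here.
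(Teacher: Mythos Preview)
Your argument is correct, but it takes a genuinely different route from the paper's. The paper proves the lemma in one line via the chain $P^{s0}\leq P^\star\leq D^\star\leq D^{s0}$: the first inequality is trivial (the algorithm's iterates are feasible for \eqref{eq:primal}), the second is weak duality, and the third is exactly Lemma~\ref{lemma:dsdstar_seq_0}. No expansion of $D^{s0}$ or Fenchel--Young for $f$ is needed.

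Your approach instead unpacks $D^{s0}$ directly and lower-bounds $f^*\fp{\bar{\boldsymbol{\lambda}}_T}$ via Fenchel--Young for $f$, which introduces the correction sum $\sum_{t}\paren{\bar{\boldsymbol{\lambda}}_T-\nabla f_s\fp{\sum_{i=1}^{t-1}\bar{\vec{x}}_i}}^\top\bar{\vec{x}}_t$; the increasing-gradient hypothesis then makes each summand nonnegative. This is self-contained and avoids invoking $P^\star$, $D^\star$, and Lemma~\ref{lemma:dsdstar_seq_0}, at the cost of redoing by hand the same monotonicity step that Lemma~\ref{lemma:dsdstar_seq_0} packages. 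The paper's proof is shorter and more modular; yours is more explicit and, as you note, makes transparent exactly which assumptions on $f_s$ are used. Both arguments rely on precisely the two hypotheses in the statement.
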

\begin{proof}
The proof comes from relating $P^{s0}$ to $P^\star$ followed by relating $D^{s0}$ to $D^\star$ with Lemma \ref{lemma:dsdstar_seq_0} and combining the two inequalities using weak duality. Trivially, $P^{s0}\leq P^\star$ since
\[P^{s0}=\sum_{t=1}^Tv_t\fp{\bar{\vec{x}}_t}-f\fp{\sum_{t=1}^T\bar{\vec{x}}_t}\leq \maximize_{\vec{0}\preceq\vec{x}_t\preceq\vec{1}}~ \sum_{t=1}^Tv_t\fp{\vec{x}_t}-f\fp{\sum_{t=1}^T\vec{x}_t} = P^\star.\]
From Lemma \ref{lemma:dsdstar_seq_0}, we have shown that $D^{s0} \geq D^\star$ and with weak duality implying that $P^\star\leq D^\star$, we can conclude that $P^{s0}\leq P^\star\leq D^\star \leq D^{s0}$.
\end{proof}

\begin{lemma}
\label{lemma:objective_positive_seq_0}
Let $v_t$ satisfy Assumption \ref{assump:v}. $f_s$ is convex and differentiable, and $f_s\fp{\vec{0}}=0$, then 
\[\sum_{t=1}^Tv_t\fp{\bar{\vec{x}}_t} - f_s\fp{\sum_{t=1}^T\bar{\vec{x}}_t}+\vec{1}^\top\!\paren{\nabla f_s\fp{\sum_{t=1}^T\bar{\vec{x}}_t}-\nabla f_s\fp{\vec{0}}} \geq 0.\]
\end{lemma}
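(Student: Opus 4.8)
The plan is to follow the proof of Lemma~\ref{lemma:objective_positive} essentially verbatim up to the point where the decision rule is invoked, and then account for the fact that Algorithm~\ref{alg:seq_update_offset} (run with $\vec{v}_{\text{offset}}=\vec{0}$) posts the threshold $\bar{\boldsymbol{\lambda}}_t=\nabla f_s\fp{\sum_{i=1}^{t-1}\bar{\vec{x}}_i}$ one step ``behind'' the telescoping index $\sum_{i=1}^{t}\bar{\vec{x}}_i$ that shows up naturally in the bound. Concretely, exactly as in Lemma~\ref{lemma:objective_positive} I would use concavity of $v_t$ together with $v_t\fp{\vec{0}}=0$ to get $v_t\fp{\bar{\vec{x}}_t}\ge \nabla v_t\fp{\bar{\vec{x}}_t}^\top\bar{\vec{x}}_t$, write $f_s\fp{\sum_{t=1}^T\bar{\vec{x}}_t}$ as a telescoping sum using $f_s\fp{\vec{0}}=0$, and apply convexity of $f_s$ term by term, arriving at
\[\sum_{t=1}^Tv_t\fp{\bar{\vec{x}}_t} - f_s\fp{\sum_{t=1}^T\bar{\vec{x}}_t}\ \ge\ \sum_{t=1}^T\inprod{\nabla v_t\fp{\bar{\vec{x}}_t} - \nabla f_s\fp{\sum_{i=1}^t\bar{\vec{x}}_i}}{\bar{\vec{x}}_t}.\]

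Next I would split $\nabla f_s\fp{\sum_{i=1}^t\bar{\vec{x}}_i} = \bar{\boldsymbol{\lambda}}_t + \paren{\nabla f_s\fp{\sum_{i=1}^t\bar{\vec{x}}_i}-\nabla f_s\fp{\sum_{i=1}^{t-1}\bar{\vec{x}}_i}}$, so that the right-hand side above equals $\sum_{t=1}^T\inprod{\bar{\vec{z}}_t-\bar{\boldsymbol{\lambda}}_t}{\bar{\vec{x}}_t} - L_1\fp{f_s,\set{\bar{\vec{x}}_t}_{t=1}^T}$, with $\bar{\vec{z}}_t=\nabla v_t\fp{\bar{\vec{x}}_t}$ and $L_1$ as defined above. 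The first sum is nonnegative because the decision rule of Algorithm~\ref{alg:seq_update_offset}, namely $\bar{\vec{x}}_t=\1{\bar{\vec{z}}_t-\bar{\boldsymbol{\lambda}}_t\succeq\vec{0}}$, forces each coordinate of $\bar{\vec{x}}_t$ to vanish wherever the corresponding coordinate of $\bar{\vec{z}}_t-\bar{\boldsymbol{\lambda}}_t$ is negative. It therefore remains to control $L_1$, and here I would invoke the inequality $L_1\fp{f_s,\set{\bar{\vec{x}}_t}_{t=1}^T}\le L_2\fp{f_s,\set{\bar{\vec{x}}_t}_{t=1}^T}=\vec{1}^\top\!\paren{\nabla f_s\fp{\sum_{t=1}^T\bar{\vec{x}}_t}-\nabla f_s\fp{\vec{0}}}$ recorded above (replace $\bar{\vec{x}}_t$ by $\vec{1}$ in each summand, which is legitimate since $f_s$ has an increasing gradient so each increment $\nabla f_s\fp{\sum_{i=1}^t\bar{\vec{x}}_i}-\nabla f_s\fp{\sum_{i=1}^{t-1}\bar{\vec{x}}_i}$ is coordinatewise nonnegative, and then telescope).

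Combining the two estimates yields $\sum_{t=1}^Tv_t\fp{\bar{\vec{x}}_t}-f_s\fp{\sum_{t=1}^T\bar{\vec{x}}_t}\ge -L_2\fp{f_s,\set{\bar{\vec{x}}_t}_{t=1}^T}$, and moving $L_2$ to the left-hand side is exactly the claimed inequality. I do not anticipate any genuine obstacle: the only thing to watch is the off-by-one in the threshold index, and once that increment term is peeled off and recognized as $L_1$, the bound $L_1\le L_2$ — which is precisely why the correction $\vec{1}^\top\paren{\nabla f_s\fp{\cdot}-\nabla f_s\fp{\vec{0}}}$ appears in the statement — closes the argument.
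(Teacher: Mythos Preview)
Your proposal is correct and follows essentially the same approach as the paper: concavity of $v_t$, telescoping of $f_s$ using $f_s\fp{\vec{0}}=0$, convexity of $f_s$, the off-by-one split that produces $L_1$, the decision rule to kill the $\sum_t\inprod{\bar{\vec{z}}_t-\bar{\boldsymbol{\lambda}}_t}{\bar{\vec{x}}_t}$ term, and the bound $L_1\le L_2$. The only cosmetic difference is ordering: the paper adds $L_2$ to the left-hand side first and immediately replaces it by $L_1$, whereas you run the telescoping argument first and invoke $L_1\le L_2$ at the end; the ingredients and logic are identical.
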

\begin{proof}
We first note that 
\[L_1\fp{f_s,\set{\bar{\vec{x}}_t}_{t=1}^T} \leq L_2\fp{f_s,\set{\bar{\vec{x}}_t}_{t=1}^T},\]
which allows us to write 
\begin{align*}
&\sum_{t=1}^Tv_t\fp{\bar{\vec{x}}_t} - f_s\fp{\sum_{t=1}^T\bar{\vec{x}}_t}+\vec{1}^\top\!\paren{\nabla f_s\fp{\sum_{t=1}^T\bar{\vec{x}}_t}-\nabla f_s\fp{\vec{0}}}
\\&=\sum_{t=1}^Tv_t\fp{\bar{\vec{x}}_t} - f_s\fp{\sum_{t=1}^T\bar{\vec{x}}_t}+L_2\fp{f_s,\set{\bar{\vec{x}}_t}_{t=1}^T}
\\&\geq\sum_{t=1}^Tv_t\fp{\bar{\vec{x}}_t} - f_s\fp{\sum_{t=1}^T\bar{\vec{x}}_t} + L_1\fp{f_s,\set{\bar{\vec{x}}_t}_{t=1}^T}
\\&\overset{\text{(a)}}{=}\sum_{t=1}^Tv_t\fp{\bar{\vec{x}}_t} - \sum_{t=1}^T\paren{f_s\fp{\sum_{i=1}^t\bar{\vec{x}}_i}-f_s\fp{\sum_{i=1}^{t-1}\bar{\vec{x}}_i}} + L_1\fp{f_s,\set{\bar{\vec{x}}_t}_{t=1}^T}
\\&\overset{\text{(b)}}{\geq} \sum_{t=1}^T\nabla v_t\fp{\bar{\vec{x}}_t}^\top\bar{\vec{x}}_t - \sum_{t=1}^T\nabla f_s\fp{\sum_{i=1}^t\bar{\vec{x}}_i}^\top\bar{\vec{x}}_t + L_1\fp{f_s,\set{\bar{\vec{x}}_t}_{t=1}^T}.
\end{align*}
Equality (a) comes from breaking the expression $f_s\fp{\sum_{t=1}^T\bar{\vec{x}}_t}$ into a telescoping sum with $f_s\fp{\vec{0}}=0$ by assumption. Inequality (b) follows from convexity of $f_s$ and concavity of $v_t$. Now, plugging in the expression for $L_1\fp{f_s,\set{\bar{\vec{x}}_t}_{t=1}^T}$, we get
\begin{align*}
&\sum_{t=1}^Tv_t\fp{\bar{\vec{x}}_t} - f_s\fp{\sum_{t=1}^T\bar{\vec{x}}_t}+\vec{1}^\top\!\paren{\nabla f_s\fp{\sum_{t=1}^T\bar{\vec{x}}_t}-\nabla f_s\fp{\vec{0}}}
\\&\geq \sum_{t=1}^T\nabla v_t\fp{\bar{\vec{x}}_t}^\top\bar{\vec{x}}_t - \sum_{t=1}^T\nabla f_s\fp{\sum_{i=1}^t\bar{\vec{x}}_i}^\top\bar{\vec{x}}_t + L_1\fp{f_s,\set{\bar{\vec{x}}_t}_{t=1}^T}
\\&= \sum_{t=1}^T\nabla v_t\fp{\bar{\vec{x}}_t}^\top\bar{\vec{x}}_t-\sum_{t=1}^T\nabla f_s\fp{\sum_{i=1}^{t-1}\bar{\vec{x}}_i}^\top\bar{\vec{x}}_t
\\&=\sum_{t=1}^T\inprod{\nabla v_t\fp{\bar{\vec{x}}_t}-\nabla f_s\fp{\sum_{i=1}^{t-1}\bar{\vec{x}}_i}}{\bar{\vec{x}}_t}.
\end{align*}
The decision rule of Algorithm \ref{alg:seq_update_offset}---i.e., $\bar{\vec{x}}_t =\1{\bar{\vec{z}}_t - \bar{\boldsymbol{\lambda}}_t \succeq \vec{0}}$ (as seen in Lemma \ref{lemma:v_optimality}) ensures that the inner product is always non-negative.
\end{proof}

Now, we bound the competitive ratio of Algorithm \ref{alg:seq_update_offset}. The general overview of the proof of Theorem \ref{thm:competitive_ratio_seq_0} is as follows: writing $D^{s0}$ in terms of $P^{s0}$, we bound the gap between $D^{s0}$ and $P^{s0}$. From here, we lower bound $D^{s0}$ by $D^{\star}$, which in turn allows us to use weak duality to relate $D^{\star}$ and $P^{\star}$.

We start with writing $D^{s0}$ in terms of $P^{s0}$:
\begin{align*}
D^{s0} &= \sum_{t=1}^T \sum_{d=1}^D\max\set{\bracket{\bar{\vec{z}}_t}_d-\bracket{\bar{\boldsymbol{\lambda}}_t}_d, 0} - \sum_{t=1}^Tv_{t*}\fp{\bar{\vec{z}}_t} + f^*\fp{\bar{\boldsymbol{\lambda}}_T}
\\&\overset{\text{(a)}}{=} \sum_{t=1}^T\paren{\bar{\vec{z}}_t-\bar{\boldsymbol{\lambda}}_t}^\top\bar{\vec{x}}_t - \sum_{t=1}^Tv_{t*}\fp{\bar{\vec{z}}_t} + f^*\fp{\bar{\boldsymbol{\lambda}}_T}
\\&\overset{\text{(b)}}{=} \sum_{t=1}^T\nabla v_t\fp{\bar{\vec{x}}_t}^\top\bar{\vec{x}}_t - \sum_{t=1}^T\nabla f_s\fp{\sum_{i=1}^{t-1}\bar{\vec{x}}_i}^\top\bar{\vec{x}}_t - \sum_{t=1}^Tv_{t*}\fp{\nabla v_t\fp{\bar{\vec{x}}_t}} + f^*\fp{\bar{\boldsymbol{\lambda}}_T}
\\&\overset{\text{(c)}}{=} \sum_{t=1}^T\nabla v_t\fp{\bar{\vec{x}}_t}^\top\bar{\vec{x}}_t - \sum_{t=1}^T\nabla f_s\fp{\sum_{i=1}^t\bar{\vec{x}}_i}^\top\bar{\vec{x}}_t - \sum_{t=1}^Tv_{t*}\fp{\nabla v_t\fp{\bar{\vec{x}}_t}} + f^*\fp{\bar{\boldsymbol{\lambda}}_T}+L_1\fp{f_s, \set{\bar{\vec{x}}_t}_{t=1}^T}.
\end{align*}
Equality (a) comes from the decision rule of Algorithm \ref{alg:seq_update_offset}, which ensures that $\bar{\vec{x}}_t = \1{\bar{\vec{z}}_t - \bar{\boldsymbol{\lambda}}_t \succeq \vec{0}}$. Equality (b) comes from first substituting the definition of $\bar{\boldsymbol{\lambda}}_t = \nabla f_s\fp{\sum_{i=1}^{t-1}\bar{\vec{x}}_i}$ and $\bar{\vec{z}}_t=\nabla v_t\fp{\bar{\vec{x}}_t}$, and equality (c) comes from adding and subtracting $\sum_{t=1}^T\nabla f_s\fp{\sum_{i=1}^t\bar{\vec{x}}_i}^\top\bar{\vec{x}}_t$.
Now, we apply the concave Fenchel-Young inequality at equality---i.e., Equation \eqref{eq:fenchel-young-concave} with $g=v_t$ and $\vec{u}=\bar{\vec{x}}_t$, in order to decompose the $v_{t*}\fp{\nabla v_t\fp{\bar{\vec{x}}_t}}$ term as follows:
\begin{align*}
D^{s0} &= \sum_{t=1}^T\nabla v_t\fp{\bar{\vec{x}}_t}^\top\bar{\vec{x}}_t - \sum_{t=1}^T\nabla f_s\fp{\sum_{i=1}^t\bar{\vec{x}}_i}^\top\bar{\vec{x}}_t - \sum_{t=1}^T\paren{\nabla v_t\fp{\bar{\vec{x}}_t}^\top\bar{\vec{x}}_t-v_t\fp{\bar{\vec{x}}_t}} + f^*\fp{\bar{\boldsymbol{\lambda}}_T}+L_1\fp{f_s, \set{\bar{\vec{x}}_t}_{t=1}^T}
\\&= \sum_{t=1}^Tv_t\fp{\bar{\vec{x}}_t} - \sum_{t=1}^T\nabla f_s\fp{\sum_{i=1}^t\bar{\vec{x}}_i}^\top\bar{\vec{x}}_t + f^*\fp{\bar{\boldsymbol{\lambda}}_T}+L_1\fp{f_s, \set{\bar{\vec{x}}_t}_{t=1}^T}.
\end{align*}
Now, we proceed to bound the duality gap between $D^{s0}$ and $P^{s0}$ by first observing the following relationship:
\begin{align*}
D^{s0} &\overset{\text{(d)}}{\leq} \sum_{t=1}^Tv_t\fp{\bar{\vec{x}}_t} - f_s\fp{\sum_{t=1}^T\bar{\vec{x}}_t} + f^*\fp{\bar{\boldsymbol{\lambda}}_T} + L_1\fp{f_s, \set{\bar{\vec{x}}_t}_{t=1}^T}
\\&= \sum_{t=1}^Tv_t\fp{\bar{\vec{x}}_t} - f_s\fp{\sum_{t=1}^T\bar{\vec{x}}_t} + f^*\fp{\bar{\boldsymbol{\lambda}}_T} + f\fp{\sum_{t=1}^T\bar{\vec{x}}_t} - f\fp{\sum_{t=1}^T\bar{\vec{x}}_t}+L_1\fp{f_s, \set{\bar{\vec{x}}_t}_{t=1}^T}
\\&\overset{\text{(e)}}{=} P^{s0} - f_s\fp{\sum_{t=1}^T\bar{\vec{x}}_t} + f^*\fp{\bar{\boldsymbol{\lambda}}_T} + f\fp{\sum_{t=1}^T\bar{\vec{x}}_t} + L_1\fp{f_s, \set{\bar{\vec{x}}_t}_{t=1}^T}
\\&\overset{\text{(f)}}{\leq}P^{s0} - f_s\fp{\sum_{t=1}^T\bar{\vec{x}}_t} + f^*\fp{\bar{\boldsymbol{\lambda}}_T} + f\fp{\sum_{t=1}^T\bar{\vec{x}}_t} + L_2\fp{f_s, \set{\bar{\vec{x}}_t}_{t=1}^T}.
\end{align*}
Inequality (d) follows directly from convexity of $f_s$ and concavity of $v_t$. Equality (e) follows from substituting the definition of $P^{s0} = \sum_{t=1}^Tv_t\fp{\bar{\vec{x}}_t} - f\fp{\sum_{t=1}^T\bar{\vec{x}}_t}$, where in the preceding equality we add and subtract $f\fp{\sum_{t=1}^T\bar{\vec{x}}_t}$. Inequality (f) follows from \[L_1\fp{f_s, \set{\bar{\vec{x}}_t}_{t=1}^T} \leq L_2\fp{f_s, \set{\bar{\vec{x}}_t}_{t=1}^T}.\] We bound the gap between $D^{s0}$ and $P^{s0}$ as a multiplicative factor of $P^{s0}$ in order to relate these quantities as a ratio:
\begin{align*}
&\frac{f_s\fp{\sum_{t=1}^T\bar{\vec{x}}_t} - f^*\fp{\bar{\boldsymbol{\lambda}}_T} - f\fp{\sum_{t=1}^T\bar{\vec{x}}_t} - L_2\fp{f_s, \set{\bar{\vec{x}}_t}_{t=1}^T}}{P^{s0}}
\\&\overset{\text{(g)}}{\geq}\frac{f_s\fp{\sum_{t=1}^T\bar{\vec{x}}_t} - f^*\fp{\bar{\boldsymbol{\lambda}}_{T+1}} - f\fp{\sum_{t=1}^T\bar{\vec{x}}_t} - L_2\fp{f_s, \set{\bar{\vec{x}}_t}_{t=1}^T}}{P^{s0}}
\\&\overset{\text{(h)}}{=} \frac{f_s\fp{\sum_{t=1}^T\bar{\vec{x}}_t} - f^*\fp{\nabla f_s\fp{\sum_{t=1}^T\bar{\vec{x}}_t}} - f\fp{\sum_{t=1}^T\bar{\vec{x}}_t} - L_2\fp{f_s, \set{\bar{\vec{x}}_t}_{t=1}^T}}{\sum_{t=1}^Tv_t\fp{\bar{\vec{x}}_t} - f\fp{\sum_{t=1}^T\bar{\vec{x}}_t}}
\\&\overset{\text{(i)}}{\geq} \frac{f_s\fp{\sum_{t=1}^T\bar{\vec{x}}_t} - f^*\fp{\nabla f_s\fp{\sum_{t=1}^T\bar{\vec{x}}_t}} - f\fp{\sum_{t=1}^T\bar{\vec{x}}_t} - L_2\fp{f_s, \set{\bar{\vec{x}}_t}_{t=1}^T}}{f_s\fp{\sum_{t=1}^T\bar{\vec{x}}_t} - f\fp{\sum_{t=1}^T\bar{\vec{x}}_t} - L_2\fp{f_s, \set{\bar{\vec{x}}_t}_{t=1}^T}}
\\&\overset{\text{(j)}}{\geq}\inf_{\vec{0}\preceq\vec{u}\preceq T\vec{1}} \frac{f_s\fp{\vec{u}} - f^*\fp{\nabla f_s\fp{\vec{u}}} - f\fp{\vec{u}} - \vec{1}^\top\!\paren{\nabla f_s\fp{\vec{u}}-\nabla f_s\fp{\vec{0}}}}{f_s\fp{\vec{u}} - f\fp{\vec{u}} - \vec{1}^\top\!\paren{\nabla f_s\fp{\vec{u}}-\nabla f_s\fp{\vec{0}}}}
=:\beta_{f,f_s}.
\end{align*}
Inequality (g) follows from $f^*$ being an increasing function and the assumption that $f_s$ has an increasing gradient implying that $\bar{\boldsymbol{\lambda}}_T \preceq \bar{\boldsymbol{\lambda}}_{T+1}$. In equality (h), we substitute the definition of $\bar{\boldsymbol{\lambda}}_{T+1} = \nabla f_s\fp{\sum_{i=1}^T\bar{\vec{x}}_i}$. Inequality (i) follows from replacing $\sum_{t=1}^Tv_t\fp{\bar{\vec{x}}_t}$ with $f_s\fp{\sum_{t=1}^T\bar{\vec{x}}_t}-L_2\fp{f_s, \set{\bar{\vec{x}}_t}_{t=1}^T}$ in the denominator. This creates a lower bound because Lemma \ref{lemma:psds_negative_seq_0} shows that the numerator is non-positive and Lemma \ref{lemma:objective_positive_seq_0} shows that $f_s\fp{\sum_{t=1}^T\bar{\vec{x}}_t}-L_2\fp{f_s, \set{\bar{\vec{x}}_t}_{t=1}^T} \leq \sum_{t=1}^Tv_t\fp{\bar{\vec{x}}_t}$.
Inequality (j) follows from observing that $\vec{0}\preceq \sum_{t=1}^T\bar{\vec{x}}_t\preceq T\vec{1}$. 
Hence,
\[\beta_{f,f_s}P^{s0}\leq f_s\fp{\sum_{t=1}^T\bar{\vec{x}}_t} - f^*\fp{\bar{\boldsymbol{\lambda}}_T} - f\fp{\sum_{t=1}^T\bar{\vec{x}}_t}+L_2\fp{f_s, \set{\bar{\vec{x}}_t}_{t=1}^T}.\]
Define 
\begin{align*}
\alpha_{f,f_s} :=1-\beta_{f,f_s} = \sup_{\vec{0}\preceq\vec{u}\preceq T\vec{1}} \frac{f^*\fp{\nabla f_s\fp{\vec{u}}}}{f_s\fp{\vec{u}} - f\fp{\vec{u}} - \vec{1}^\top\!\paren{\nabla f_s\fp{\vec{u}}-\nabla f_s\fp{\vec{0}}}}.
\end{align*}
We lower bound $D^{s0}$ by $D^{\star}$ by Lemma \ref{lemma:dsdstar_seq_0} so $P^{s0} - D^{s0} \geq P^{s0}\beta_{f,f_s}$. Applying weak duality, we get that $P^{s0} - P^\star \geq P^{s0}\beta_{f,f_s}$. Rearranging this equation gives us the following:
\[\frac{P^{s0}}{P^\star} \geq \frac{1}{1-\beta_{f,f_s}} = \frac{1}{\alpha_{f,f_s}}.\]
This concludes the proof of Theorem \ref{thm:competitive_ratio_seq_0}.

\subsection{Proof of Theorem \ref{thm:quasiconvex_seq_0}}
\label{sec:proof_of_quasiconvex_seq_0}
In order to show that Problem \eqref{eq:quasiconvex_seq_0} is a quasiconvex optimization problem, we must verify that the constraints are convex and the objective is quasiconvex.
It suffices to show that
\[\max_{\vec{u}\in\U}~\frac{f^*\fp{\nabla f_s\fp{\vec{u}}}}{f_s\fp{\vec{u}}-f\fp{\vec{u}}-\vec{1}^\top\!\paren{\nabla f_s\fp{\vec{u}}-\nabla f_s\fp{\vec{0}}}}\]
is a quasiconvex function in $\vec{a}$. Since a non-negative weighted maximum of quasiconvex functions is also quasiconvex, it suffices to show that $\frac{f^*\fp{\nabla f_s\fp{\vec{u}}}}{f_s\fp{\vec{u}}-f\fp{\vec{u}}-\vec{1}^\top\!\paren{\nabla f_s\fp{\vec{u}}-\nabla f_s\fp{\vec{0}}}}$ is quasiconvex in $\vec{a}$ for a fixed $\vec{u}$. We can directly apply the definition of quasiconvexity. Let $S_{\alpha}\fp{f_s}$ be the sub-level sets of $f_s$ for $\vec{a}\in\R^N$. We have the following:
\begin{align*}
S_{\alpha}\fp{f_s}&=\setcond{\vec{a}\succeq\vec{1}}{\frac{f^*\fp{\nabla f_s\fp{\vec{u}}}}{f_s\fp{\vec{u}}-f\fp{\vec{u}} - \vec{1}^\top\!\paren{\nabla f_s\fp{\vec{u}}-\nabla f_s\fp{\vec{0}}}}\leq\alpha}
\\&= \setcond{\vec{a}\succeq\vec{1}}{f^*\fp{\nabla f_s\fp{\vec{u}}}\leq\alpha\paren{f_s\fp{\vec{u}}-f\fp{\vec{u}} - \vec{1}^\top\!\paren{\nabla f_s\fp{\vec{u}}-\nabla f_s\fp{\vec{0}}}}}.
\end{align*}
For a fixed value of $\vec{u}$, $\bracket{\nabla f_s\fp{\vec{u}}}_d$ is linear in $\vec{a}$ for all $d$. Since $f^*$ is always convex, composing a convex function with a linear function of $\vec{a}$ is convex in $\vec{a}$. Finally, since $f_s\fp{\vec{u}}$ is linear in $\vec{a}$, 
\[f_s\fp{\vec{u}}-f\fp{\vec{u}} - \vec{1}^\top\!\paren{\nabla f_s\fp{\vec{u}}-\nabla f_s\fp{\vec{0}}}\]
must be linear in $\vec{a}$, and thus the constraints of $S_{\alpha}\fp{f_s}$ are convex, directly implying that $S_{\alpha}\fp{f_s}$ is a convex set.

\subsection{Proof of Theorem \ref{thm:competitive_ratio_seq_1}}
\label{sec:proof_of_competitive_ratio_seq_1}
In this subsection, we analyze Algorithm \ref{alg:seq_update_offset} called with $f_s$ satisfying Assumption \ref{assump:fs_seq} and $\vec{v}_{\text{offset}}=\vec{1}$.
We define the following quantities:
\[P^{s1}:=\sum_{t=1}^Tv_t\fp{\bar{\vec{x}}_t}-f\fp{\sum_{t=1}^T\bar{\vec{x}}_t},\]
\[D^{s1}:=\sum_{t=1}^T\sum_{d=1}^D\max\set{\bracket{\bar{\vec{z}}_t}_d-\bracket{\bar{\boldsymbol{\lambda}}_t}_d,0}-\sum_{t=1}^Tv_{t*}\fp{\bar{\vec{z}}_t}+f^*\fp{\bar{\boldsymbol{\lambda}}_T}.\]
Note that $P^{s1}$ and $D^{s1}$ are identical to their counterparts, $P^s$ and $D^s$ introduced in Section \ref{sec:algorithm}, with the only difference being that for $P^{s1}$ and $D^{s1}$, $\bar{\vec{x}}_t$, $\bar{\boldsymbol{\lambda}}_t$, and $\bar{\vec{z}}_t$ come from Algorithm \ref{alg:seq_update_offset}.

We first show the following lemmas which aid in our analysis of the competitive ratio of Algorithm \ref{alg:seq_update_offset}.
\begin{lemma}
\label{lemma:dsdstar_seq_1}
If $f_s$ is increasing and $f_s$ has an increasing gradient, then $D^\star\leq D^{s1}$.
\end{lemma}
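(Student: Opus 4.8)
The statement is the exact analogue of Lemma~\ref{lemma:dsdstar_seq_0} for the offset version of Algorithm~\ref{alg:seq_update_offset}, so the plan is to copy that argument essentially verbatim, checking that the extra $\vec{1}$ in the gradient argument does not interfere. First I would record the form of the dual variables produced by Algorithm~\ref{alg:seq_update_offset} with $\vec{v}_{\text{offset}}=\vec{1}$: namely $\bar{\boldsymbol{\lambda}}_t = \nabla f_s\!\big(\sum_{i=1}^{t-1}\bar{\vec{x}}_i+\vec{1}\big)$ and $\bar{\vec{z}}_t=\nabla v_t(\bar{\vec{x}}_t)$. Since $\bar{\vec{x}}_i\succeq\vec{0}$ for all $i$, the partial sums satisfy $\sum_{i=1}^{t-1}\bar{\vec{x}}_i+\vec{1}\preceq\sum_{i=1}^{T-1}\bar{\vec{x}}_i+\vec{1}$ for every $t\in[T]$, and then the increasing-gradient hypothesis on $f_s$ gives $\bar{\boldsymbol{\lambda}}_t\preceq\bar{\boldsymbol{\lambda}}_T$ for all $t\in[T]$.

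Next I would use this monotonicity inside the sum of positive-part terms: for each $t$ and each coordinate $d$, $\max\{[\bar{\vec{z}}_t]_d-[\bar{\boldsymbol{\lambda}}_t]_d,0\}\ge\max\{[\bar{\vec{z}}_t]_d-[\bar{\boldsymbol{\lambda}}_T]_d,0\}$, so
\[
D^{s1}\;\ge\;\sum_{t=1}^T\sum_{d=1}^D\max\set{[\bar{\vec{z}}_t]_d-[\bar{\boldsymbol{\lambda}}_T]_d,\,0}-\sum_{t=1}^T v_{t*}(\bar{\vec{z}}_t)+f^*(\bar{\boldsymbol{\lambda}}_T).
\]
The right-hand side is precisely the dual objective in~\eqref{eq:dual} evaluated at the point $\big(\bar{\boldsymbol{\lambda}}_T,\{\bar{\vec{z}}_t\}_{t=1}^T\big)$, so the last step is to verify this point is dual-feasible and invoke the definition of $D^\star$ as the infimum. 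Feasibility is immediate: $f_s$ increasing implies $\nabla f_s\succeq\vec{0}$, hence $\bar{\boldsymbol{\lambda}}_T\succeq\vec{0}$; and $v_t$ increasing (Assumption~\ref{assump:v}(2)) implies $\bar{\vec{z}}_t=\nabla v_t(\bar{\vec{x}}_t)\succeq\vec{0}$. Therefore the displayed right-hand side is $\ge D^\star$, which chains with the previous inequality to give $D^\star\le D^{s1}$.

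I do not expect a genuine obstacle here; the one point worth stating explicitly in the write-up is that the additive offset $\vec{1}$ is harmless precisely because monotonicity of $\nabla f_s$ is preserved under a common shift of both arguments, so the comparison $\bar{\boldsymbol{\lambda}}_t\preceq\bar{\boldsymbol{\lambda}}_T$ still holds. Everything else is a one-line dominance argument plus a feasibility check, exactly as in Lemma~\ref{lemma:dsdstar_seq_0}.
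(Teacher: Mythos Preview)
Your proposal is correct and follows essentially the same approach as the paper's proof: use the increasing-gradient hypothesis on $f_s$ to get $\bar{\boldsymbol{\lambda}}_t\preceq\bar{\boldsymbol{\lambda}}_T$ for all $t\in[T]$, replace each $\bar{\boldsymbol{\lambda}}_t$ by $\bar{\boldsymbol{\lambda}}_T$ inside the positive-part sum, and then observe that the resulting expression is the dual objective at the feasible point $(\bar{\boldsymbol{\lambda}}_T,\{\bar{\vec{z}}_t\}_{t=1}^T)$, hence $\ge D^\star$. The feasibility check via monotonicity of $f_s$ and $v_t$ is also exactly what the paper does.
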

\begin{proof}
From the assumption that $f_s$ has an increasing gradient, we know that \[\nabla f_s\fp{\sum_{i=1}^{t-1}\bar{\vec{x}}_i+ \vec{1}} \preceq \nabla f_s\fp{\sum_{i=1}^{T-1}\bar{\vec{x}}_i+ \vec{1}}\] for all $t\in\bracket{T}$ since $\bar{\vec{x}}_i\succeq\vec{0}$ for all $i\in\bracket{T}$. This, in turn, implies that $\bar{\boldsymbol{\lambda}}_t \preceq \bar{\boldsymbol{\lambda}}_T$ for all $t\in\bracket{T}$ so that
\begin{align*}
D^{s1} &= \sum_{t=1}^T \sum_{d=1}^D\max\set{\bracket{\bar{\vec{z}}_t}_d-\bracket{\bar{\boldsymbol{\lambda}}_t}_d, 0} - \sum_{t=1}^Tv_{t*}\fp{\bar{\vec{z}}_t} + f^*\fp{\bar{\boldsymbol{\lambda}}_T} \\&
\geq \sum_{t=1}^T \sum_{d=1}^D\max\set{\bracket{\bar{\vec{z}}_t}_d-\bracket{\bar{\boldsymbol{\lambda}}_T}_d, 0} - \sum_{t=1}^Tv_{t*}\fp{\bar{\vec{z}}_t} + f^*\fp{\bar{\boldsymbol{\lambda}}_T} \geq D^\star.
\end{align*}
In order for the final inequality to hold, $\bar{\boldsymbol{\lambda}}_T$ and $\set{\bar{\vec{z}}_t}_{t=1}^T$ must be feasible points for the optimization problem defining $D^\star$, i.e., $\bar{\boldsymbol{\lambda}}_T\succeq\vec{0}$ and $\bar{\vec{z}}_t\succeq\vec{0}$ for all $t\in\bracket{T}$. Since $f_s$ is increasing by assumption, we know that $f_s$ has a non-negative gradient, and similarly since $v_t$ is increasing by assumption, we know that $v_t$ has a non-negative gradient.
\end{proof}

\begin{lemma}
\label{lemma:psds_negative_seq_1}
If $f_s$ is increasing and $f_s$ has an increasing gradient, then $P^{s1}\leq D^{s1}$.
\end{lemma}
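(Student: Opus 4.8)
The plan is to mimic the argument for the analogous Lemma \ref{lemma:psds_negative_seq_0}, chaining through the offline optimum $P^\star$ and $D^\star$. First I would observe that the allocation $\set{\bar{\vec{x}}_t}_{t=1}^T$ produced by Algorithm \ref{alg:seq_update_offset} (called with $f_s$ and $\vec{v}_{\text{offset}}=\vec{1}$) is feasible for Problem \eqref{eq:primal}, since the decision rule of Line 3 enforces $\vec{0}\preceq\bar{\vec{x}}_t\preceq\vec{1}$. Hence
\[
P^{s1}=\sum_{t=1}^Tv_t\fp{\bar{\vec{x}}_t}-f\fp{\sum_{t=1}^T\bar{\vec{x}}_t}\leq \maximize_{\vec{0}\preceq\vec{x}_t\preceq\vec{1}}~\sum_{t=1}^Tv_t\fp{\vec{x}_t}-f\fp{\sum_{t=1}^T\vec{x}_t}=P^\star.
\]

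Next I would invoke weak duality for the primal-dual pair \eqref{eq:primal}--\eqref{eq:dual}, which gives $P^\star\leq D^\star$. Then the already-established Lemma \ref{lemma:dsdstar_seq_1}, whose hypotheses ($f_s$ increasing with increasing gradient) are exactly those assumed here, yields $D^\star\leq D^{s1}$. Stringing these together produces
\[
P^{s1}\leq P^\star\leq D^\star\leq D^{s1},
\]
which is the claimed inequality.

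There is essentially no obstacle here: the only subtlety is making sure the feasibility of $\set{\bar{\vec{x}}_t}$ for \eqref{eq:primal} is genuinely a consequence of the algorithm's update rule (via Lemma \ref{lemma:v_optimality}, which guarantees $\bar{\vec{x}}_t=\1{\bar{\vec{z}}_t-\bar{\boldsymbol{\lambda}}_t\succeq\vec{0}}\in\set{0,1}^D$), and that the hypotheses of Lemma \ref{lemma:dsdstar_seq_1} are available. No property of the offset $\vec{v}_{\text{offset}}=\vec{1}$ beyond what enters Lemma \ref{lemma:dsdstar_seq_1} is needed, so the proof is a short three-step chain identical in spirit to that of Lemma \ref{lemma:psds_negative_seq_0}.
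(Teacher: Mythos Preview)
Your proposal is correct and follows exactly the paper's own argument: show $P^{s1}\leq P^\star$ by feasibility of the algorithm's allocation, invoke weak duality $P^\star\leq D^\star$, and then apply Lemma~\ref{lemma:dsdstar_seq_1} to obtain $D^\star\leq D^{s1}$. The paper's proof is the same three-step chain, so there is nothing to add.
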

\begin{proof}
The proof comes from relating $P^{s1}$ to $P^\star$ followed by relating $D^{s1}$ to $D^\star$ with Lemma \ref{lemma:dsdstar_seq_1} and combining the two inequalities using weak duality. Trivially, $P^{s1}\leq P^\star$ since
\[P^{s1}=\sum_{t=1}^Tv_t\fp{\bar{\vec{x}}_t}-f\fp{\sum_{t=1}^T\bar{\vec{x}}_t}\leq \maximize_{\vec{0}\preceq\vec{x}_t\preceq\vec{1}}~ \sum_{t=1}^Tv_t\fp{\vec{x}_t}-f\fp{\sum_{t=1}^T\vec{x}_t} = P^\star.\]
From Lemma \ref{lemma:dsdstar_seq_1}, we have shown that $D^{s1} \geq D^\star$ and with weak duality implying that $P^\star\leq D^\star$, we can conclude that $P^{s1}\leq P^\star\leq D^\star \leq D^{s1}$.
\end{proof}

We now show that Algorithm \ref{alg:seq_update_offset} does not make a decision which causes the objective to become negative.
\begin{lemma}
\label{lemma:objective_positive_seq_1}
Let $v_t$ satisfy Assumption \ref{assump:v}. If $f_s$ is convex and differentiable, $f_s$ has an increasing gradient, and $f_s\fp{\vec{0}}=0$, then 
\[\sum_{t=1}^Tv_t\fp{\bar{\vec{x}}_t} - f_s\fp{\sum_{t=1}^T\bar{\vec{x}}_t}\geq 0.\]
\end{lemma}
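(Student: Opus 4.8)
The plan is to mirror the proof of Lemma \ref{lemma:objective_positive}, inserting one extra comparison to account for the offset $\vec{v}_{\text{offset}}=\vec{1}$ used by Algorithm \ref{alg:seq_update_offset} in this regime. Recall that here the decision rule reads $\bar{\vec{x}}_t=\1{\bar{\vec{z}}_t-\bar{\boldsymbol{\lambda}}_t\succeq\vec{0}}$ with $\bar{\boldsymbol{\lambda}}_t=\nabla f_s\fp{\sum_{i=1}^{t-1}\bar{\vec{x}}_i+\vec{1}}$ and $\bar{\vec{z}}_t=\nabla v_t\fp{\bar{\vec{x}}_t}$ (from Lemma \ref{lemma:v_optimality}). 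First I would lower-bound $\sum_{t=1}^Tv_t\fp{\bar{\vec{x}}_t}$ by $\sum_{t=1}^T\inprod{\nabla v_t\fp{\bar{\vec{x}}_t}}{\bar{\vec{x}}_t}$: applying the first-order concavity inequality for $v_t$ at $\bar{\vec{x}}_t$ evaluated at $\vec{0}$ and using $v_t\fp{\vec{0}}=0$ yields $v_t\fp{\bar{\vec{x}}_t}\geq\inprod{\nabla v_t\fp{\bar{\vec{x}}_t}}{\bar{\vec{x}}_t}$. Next I would expand $f_s\fp{\sum_{t=1}^T\bar{\vec{x}}_t}$ as the telescoping sum $\sum_{t=1}^T\paren{f_s\fp{\sum_{i=1}^t\bar{\vec{x}}_i}-f_s\fp{\sum_{i=1}^{t-1}\bar{\vec{x}}_i}}$, which is valid since $f_s\fp{\vec{0}}=0$, and bound each increment from above by $\inprod{\nabla f_s\fp{\sum_{i=1}^t\bar{\vec{x}}_i}}{\bar{\vec{x}}_t}$ using convexity of $f_s$.

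The step that genuinely needs the offset structure is replacing $\nabla f_s\fp{\sum_{i=1}^t\bar{\vec{x}}_i}$ by the dual variable $\bar{\boldsymbol{\lambda}}_t$. Since $\bar{\vec{x}}_t\preceq\vec{1}$, we have $\sum_{i=1}^t\bar{\vec{x}}_i=\sum_{i=1}^{t-1}\bar{\vec{x}}_i+\bar{\vec{x}}_t\preceq\sum_{i=1}^{t-1}\bar{\vec{x}}_i+\vec{1}$, so the increasing-gradient hypothesis on $f_s$ gives $\nabla f_s\fp{\sum_{i=1}^t\bar{\vec{x}}_i}\preceq\nabla f_s\fp{\sum_{i=1}^{t-1}\bar{\vec{x}}_i+\vec{1}}=\bar{\boldsymbol{\lambda}}_t$; combined with $\bar{\vec{x}}_t\succeq\vec{0}$ this gives, coordinatewise, $\inprod{\nabla f_s\fp{\sum_{i=1}^t\bar{\vec{x}}_i}}{\bar{\vec{x}}_t}\leq\inprod{\bar{\boldsymbol{\lambda}}_t}{\bar{\vec{x}}_t}$. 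Chaining the three estimates then yields
\[\sum_{t=1}^Tv_t\fp{\bar{\vec{x}}_t} - f_s\fp{\sum_{t=1}^T\bar{\vec{x}}_t}\geq\sum_{t=1}^T\inprod{\bar{\vec{z}}_t-\bar{\boldsymbol{\lambda}}_t}{\bar{\vec{x}}_t}.\]

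Finally I would observe that the decision rule makes each of these inner products nonnegative: coordinatewise, $\bracket{\bar{\vec{x}}_t}_d\in\set{0,1}$ equals $1$ only when $\bracket{\bar{\vec{z}}_t-\bar{\boldsymbol{\lambda}}_t}_d\geq0$, so every summand $\bracket{\bar{\vec{z}}_t-\bar{\boldsymbol{\lambda}}_t}_d\bracket{\bar{\vec{x}}_t}_d$ is nonnegative, whence $\sum_{t=1}^T\inprod{\bar{\vec{z}}_t-\bar{\boldsymbol{\lambda}}_t}{\bar{\vec{x}}_t}\geq 0$. This closes the argument. I do not anticipate a real obstacle: the only difference from Lemma \ref{lemma:objective_positive} (and the cleaner point compared to Lemma \ref{lemma:objective_positive_seq_0}) is that the offset $\vec{1}$ makes the comparison $\nabla f_s\fp{\sum_{i=1}^t\bar{\vec{x}}_i}\preceq\bar{\boldsymbol{\lambda}}_t$ immediate, so no $L_1/L_2$ bookkeeping is required here.
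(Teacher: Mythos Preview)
Your proposal is correct and follows essentially the same route as the paper: concavity of $v_t$ plus $v_t\fp{\vec{0}}=0$, a telescoping of $f_s$ using $f_s\fp{\vec{0}}=0$, convexity of $f_s$, then the increasing-gradient comparison $\nabla f_s\fp{\sum_{i=1}^t\bar{\vec{x}}_i}\preceq\nabla f_s\fp{\sum_{i=1}^{t-1}\bar{\vec{x}}_i+\vec{1}}$ (since $\bar{\vec{x}}_t\preceq\vec{1}$), and finally the decision rule. Your explicit mention of $\bar{\vec{x}}_t\preceq\vec{1}$ in that last comparison is in fact a small clarification over the paper's presentation.
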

\begin{proof}
Since $f_s\fp{\vec{0}}=0$, we have that 
\begin{align*}
\sum_{t=1}^Tv_t\fp{\bar{\vec{x}}_t} - f_s\fp{\sum_{t=1}^T\bar{\vec{x}}_t}
&\overset{\text{(a)}}{\geq} \sum_{t=1}^T\nabla v_t\fp{\bar{\vec{x}}_t}^\top\bar{\vec{x}}_t - f_s\fp{\sum_{t=1}^T\bar{\vec{x}}_t}
\\&=\sum_{t=1}^T\paren{\nabla v_t\fp{\bar{\vec{x}}_t}^\top\bar{\vec{x}}_t - f_s\fp{\sum_{i=1}^t\bar{\vec{x}}_i} + f_s\fp{\sum_{i=1}^{t-1}\bar{\vec{x}}_i}}
\\&\overset{\text{(b)}}{\geq} \sum_{t=1}^T\inprod{\nabla v_t\fp{\bar{\vec{x}}_t} - \nabla f_s\fp{\sum_{i=1}^t\bar{\vec{x}}_i}}{\bar{\vec{x}}_t}
\\&\overset{\text{(c)}}{\geq}\sum_{t=1}^T\inprod{\nabla v_t\fp{\bar{\vec{x}}_t} - \nabla f_s\fp{\sum_{i=1}^{t-1}\bar{\vec{x}}_i+\vec{1}}}{\bar{\vec{x}}_t}.
\end{align*}
Inequality (a) comes from concavity of $v_t$ and inequality (b) follows from convexity of $f_s$. Inequality (c) comes from $f_s$ having an increasing gradient. 
Finally, the decision rule of Algorithm \ref{alg:seq_update_offset}---i.e., $\bar{\vec{x}}_t =\1{\bar{\vec{z}}_t - \bar{\boldsymbol{\lambda}}_t \succeq \vec{0}}$ (as seen in Lemma \ref{lemma:v_optimality}) ensures that the inner product is always non-negative.
\end{proof}

Now, we bound the competitive ratio of Algorithm \ref{alg:seq_update_offset}. The general overview of the proof of Theorem \ref{thm:competitive_ratio_seq_1} is as follows: writing $D^{s1}$ in terms of $P^{s1}$, we bound the gap between $D^{s1}$ and $P^{s1}$. From here, we lower bound $D^{s1}$ by $D^{\star}$, which in turn allows us to use weak duality to relate $D^{\star}$ and $P^{\star}$.

We start with writing $D^{s1}$ in terms of $P^{s1}$:
\begin{align*}
D^{s1} &= \sum_{t=1}^T \sum_{d=1}^D\max\set{\bracket{\bar{\vec{z}}_t}_d-\bracket{\bar{\boldsymbol{\lambda}}_t}_d, 0} - \sum_{t=1}^Tv_{t*}\fp{\bar{\vec{z}}_t} + f^*\fp{\bar{\boldsymbol{\lambda}}_T}
\\&\overset{\text{(a)}}{=} \sum_{t=1}^T\paren{\bar{\vec{z}}_t-\bar{\boldsymbol{\lambda}}_t}^\top\bar{\vec{x}}_t - \sum_{t=1}^Tv_{t*}\fp{\bar{\vec{z}}_t} + f^*\fp{\bar{\boldsymbol{\lambda}}_T}
\\&\overset{\text{(b)}}{=} \sum_{t=1}^T\nabla v_t\fp{\bar{\vec{x}}_t}^\top\bar{\vec{x}}_t - \sum_{t=1}^T\nabla f_s\fp{\sum_{i=1}^{t-1}\bar{\vec{x}}_i+\vec{1}}^\top\bar{\vec{x}}_t - \sum_{t=1}^Tv_{t*}\fp{\nabla v_t\fp{\bar{\vec{x}}_t}} + f^*\fp{\bar{\boldsymbol{\lambda}}_T}
\\&\overset{\text{(c)}}{\leq}\sum_{t=1}^T\nabla v_t\fp{\bar{\vec{x}}_t}^\top\bar{\vec{x}}_t - \sum_{t=1}^T\nabla f_s\fp{\sum_{i=1}^t\bar{\vec{x}}_i}^\top\bar{\vec{x}}_t - \sum_{t=1}^Tv_{t*}\fp{\nabla v_t\fp{\bar{\vec{x}}_t}} + f^*\fp{\bar{\boldsymbol{\lambda}}_T}.
\end{align*}
Equality (a) comes from the decision rule of Algorithm \ref{alg:seq_update_offset}, which ensures that $\bar{\vec{x}}_t = \1{\bar{\vec{z}}_t - \bar{\boldsymbol{\lambda}}_t \succeq \vec{0}}$. Equality (b) comes from substituting the definition of $\bar{\boldsymbol{\lambda}}_t = \nabla f_s\fp{\sum_{i=1}^{t-1}\bar{\vec{x}}_i+\vec{1}}$.
Inequality (c) comes from the increasing gradient property of $f_s$.
Now, we apply the concave Fenchel-Young inequality at equality---i.e., Equation \eqref{eq:fenchel-young-concave} with $g=v_t$ and $\vec{u}=\bar{\vec{x}}_t$, in order to decompose the $v_{t*}\fp{\nabla v_t\fp{\bar{\vec{x}}_t}}$ term as follows:
\begin{align*}
D^{s1}&\leq \sum_{t=1}^T\nabla v_t\fp{\bar{\vec{x}}_t}^\top\bar{\vec{x}}_t - \sum_{t=1}^T\nabla f_s\fp{\sum_{i=1}^t\bar{\vec{x}}_i}^\top\bar{\vec{x}}_t - \sum_{t=1}^Tv_{t*}\fp{\nabla v_t\fp{\bar{\vec{x}}_t}} + f^*\fp{\bar{\boldsymbol{\lambda}}_T}
\\&= \sum_{t=1}^T\nabla v_t\fp{\bar{\vec{x}}_t}^\top\bar{\vec{x}}_t - \sum_{t=1}^T\nabla f_s\fp{\sum_{i=1}^t\bar{\vec{x}}_i}^\top\bar{\vec{x}}_t - \sum_{t=1}^T\paren{\nabla v_t\fp{\bar{\vec{x}}_t}^\top\bar{\vec{x}}_t-v_t\fp{\bar{\vec{x}}_t}} + f^*\fp{\bar{\boldsymbol{\lambda}}_T}
\\&= \sum_{t=1}^Tv_t\fp{\bar{\vec{x}}_t} - \sum_{t=1}^T\nabla f_s\fp{\sum_{i=1}^t\bar{\vec{x}}_i}^\top\bar{\vec{x}}_t + f^*\fp{\bar{\boldsymbol{\lambda}}_T}
\end{align*}
Now, we proceed to bound the duality gap between $D^{s1}$ and $P^{s1}$ by first observing the following relationship:
\begin{align*}
D^{s1} &\overset{\text{(d)}}{\leq} \sum_{t=1}^Tv_t\fp{\bar{\vec{x}}_t} - f_s\fp{\sum_{t=1}^T\bar{\vec{x}}_t} + f^*\fp{\bar{\boldsymbol{\lambda}}_T}
\\&= \sum_{t=1}^Tv_t\fp{\bar{\vec{x}}_t} - f_s\fp{\sum_{t=1}^T\bar{\vec{x}}_t} + f^*\fp{\bar{\boldsymbol{\lambda}}_T} + f\fp{\sum_{t=1}^T\bar{\vec{x}}_t} - f\fp{\sum_{t=1}^T\bar{\vec{x}}_t}
\\&\overset{\text{(e)}}{=} P^{s1} - f_s\fp{\sum_{t=1}^T\bar{\vec{x}}_t} + f^*\fp{\bar{\boldsymbol{\lambda}}_T} + f\fp{\sum_{t=1}^T\bar{\vec{x}}_t}.
\end{align*}
Inequality (d) follows directly from convexity of $f_s$ and (e) follows by substituting the definition of $P^{s1} = \sum_{t=1}^Tv_t\fp{\bar{\vec{x}}_t} - f\fp{\sum_{t=1}^T\bar{\vec{x}}_t}$ where in the preceding equality we add and subtract $f\fp{\sum_{t=1}^T\bar{\vec{x}}_t}$. We bound the gap between $D^{s1}$ and $P^{s1}$ as a multiplicative factor of $P^{s1}$ in order to relate these quantities as a ratio:
\begin{align*}
&\frac{f_s\fp{\sum_{t=1}^T\bar{\vec{x}}_t} - f^*\fp{\bar{\boldsymbol{\lambda}}_T} - f\fp{\sum_{t=1}^T\bar{\vec{x}}_t}}{P^{s1}}
\\&\overset{\text{(f)}}{=} \frac{f_s\fp{\sum_{t=1}^T\bar{\vec{x}}_t} - f^*\fp{\nabla f_s\fp{\sum_{t=1}^{T-1}\bar{\vec{x}}_t+\vec{1}}} - f\fp{\sum_{t=1}^T\bar{\vec{x}}_t}}{\sum_{t=1}^Tv_t\fp{\bar{\vec{x}}_t} - f\fp{\sum_{t=1}^T\bar{\vec{x}}_t}}
\\&\overset{\text{(g)}}{\geq} \frac{f_s\fp{\sum_{t=1}^T\bar{\vec{x}}_t} - f^*\fp{\nabla f_s\fp{\sum_{t=1}^{T-1}\bar{\vec{x}}_t+\vec{1}}} - f\fp{\sum_{t=1}^T\bar{\vec{x}}_t}}{f_s\fp{\sum_{t=1}^T\bar{\vec{x}}_t} - f\fp{\sum_{t=1}^T\bar{\vec{x}}_t}}
\\&= \frac{f_s\fp{\sum_{t=1}^{T-1}\bar{\vec{x}}_t+\bar{\vec{x}}_T} - f^*\fp{\nabla f_s\fp{\sum_{t=1}^{T-1}\bar{\vec{x}}_t+\vec{1}}} - f\fp{\sum_{t=1}^{T-1}\bar{\vec{x}}_t+\bar{\vec{x}}_T}}{f_s\fp{\sum_{t=1}^{T-1}\bar{\vec{x}}_t+\bar{\vec{x}}_T} - f\fp{\sum_{t=1}^{T-1}\bar{\vec{x}}_t+\bar{\vec{x}}_T}}
\\&\overset{\text{(h)}}{\geq}\inf_{\vec{0}\preceq\vec{u}\preceq \paren{T-1}\vec{1}} \frac{f_s\fp{\vec{u}+\bar{\vec{x}}_T} - f^*\fp{\nabla f_s\fp{\vec{u}+\vec{1}}} - f\fp{\vec{u}+\bar{\vec{x}}_T}}{f_s\fp{\vec{u}+\bar{\vec{x}}_T} - f\fp{\vec{u}+\bar{\vec{x}}_T}}
\\&\overset{\text{(i)}}{\geq}\inf_{\vec{0}\preceq\vec{u}\preceq \paren{T-1}\vec{1}} \frac{f_s\fp{\vec{u}} - f^*\fp{\nabla f_s\fp{\vec{u}+\vec{1}}} - f\fp{\vec{u}}}{f_s\fp{\vec{u}} - f\fp{\vec{u}}}
=:\beta_{f,f_s}.
\end{align*}
In equality (f), we replace $\bar{\boldsymbol{\lambda}}_T$ with $\nabla f_s\fp{\sum_{i=1}^{T-1}\bar{\vec{x}}_i+\vec{1}}$. Inequality (g) follows from replacing $\sum_{t=1}^Tv_t\fp{\bar{\vec{x}}_t}$ with $f_s\fp{\sum_{t=1}^T\bar{\vec{x}}_t}$ in the denominator. This creates a lower bound because Lemma \ref{lemma:psds_negative_seq_1} shows that the numerator is non-positive and Lemma \ref{lemma:objective_positive_seq_1} shows that $f_s\fp{\sum_{t=1}^T\bar{\vec{x}}_t} \leq \sum_{t=1}^Tv_t\fp{\bar{\vec{x}}_t}$.
Inequality (h) follows from observing that $\vec{0}\preceq \sum_{t=1}^{T-1}\bar{\vec{x}}_t\preceq \paren{T-1}\vec{1}$. Inequality (i) comes from seeing that $\frac{b-a}{b}$ is increasing in $b$ for $a,b>0$, where $f_s\fp{\vec{u}}-f\fp{\vec{u}} = b$ and $f^*\fp{\nabla f_s\fp{\vec{u}+\vec{1}}}=a$, and $f_s\fp{\vec{u}}-f\fp{\vec{u}}$ is increasing in $\vec{u}$ by assumption. Hence,
\[\beta_{f,f_s}P^{s1}\leq f_s\fp{\sum_{t=1}^T\bar{\vec{x}}_t} - f^*\fp{\bar{\boldsymbol{\lambda}}_T} - f\fp{\sum_{t=1}^T\bar{\vec{x}}_t}.\]
Define \[\alpha_{f,f_s} :=1-\beta_{f,f_s} = \sup_{\vec{0}\preceq\vec{u}\preceq \paren{T-1}\vec{1}} \frac{f^*\fp{\nabla f_s\fp{\vec{u}+\vec{1}}}}{f_s\fp{\vec{u}} - f\fp{\vec{u}}}.\]
Note that the assumption that $f_s\fp{\vec{u}}\geq f\fp{\vec{u}}$ for all $\vec{0}\preceq\vec{u}\preceq \paren{T-1}\vec{1}$ ensures that $\alpha_{f,f_s}\geq 0$, which ensures that the competitive ratio is non-negative. We lower bound $D^{s1}$ by $D^{\star}$ with Lemma \ref{lemma:dsdstar_seq_1} so $P^{s1} - D^{s1} \geq P^{s1}\beta_{f,f_s}$. Applying weak duality, we get that $P^{s1} - P^\star \geq P^{s1}\beta_{f,f_s}$. Rearranging this equation gives us the following:
\[\frac{P^{s1}}{P^\star} \geq \frac{1}{1-\beta_{f,f_s}} = \frac{1}{\alpha_{f,f_s}}.\]
This concludes the proof of Theorem \ref{thm:competitive_ratio_seq_1}.

\subsection{Proof of Theorem \ref{thm:quasiconvex_seq_1}}
\label{sec:proof_of_quasiconvex_seq_1}
In order to show that Problem \eqref{eq:quasiconvex_seq_1} is a quasiconvex optimization problem, we must verify that the constraints are convex and the objective is quasiconvex.
It suffices to show that
\[\max_{\vec{u}\in\U}~\frac{f^*\fp{\nabla f_s\fp{\vec{u}+\vec{1}}}}{f_s\fp{\vec{u}}-f\fp{\vec{u}}}\]
is a quasiconvex function in $\vec{a}$. Since a non-negative weighted maximum of quasiconvex functions is also quasiconvex, it suffices to show that $\frac{f^*\fp{\nabla f_s\fp{\vec{u}+\vec{1}}}}{f_s\fp{\vec{u}}-f\fp{\vec{u}}}$ is quasiconvex in $\vec{a}$ for a fixed $\vec{u}$. We can directly apply the definition of quasiconvexity. Let $S_{\alpha}\fp{f_s}$ be the sub-level sets of $f_s$ for $\vec{a}\in\R^N$. We have the following:
\begin{align*}
S_{\alpha}\fp{f_s}&=\setcond{\vec{a}\succeq\vec{1}}{\frac{f^*\fp{\nabla f_s\fp{\vec{u}+\vec{1}}}}{f_s\fp{\vec{u}}-f\fp{\vec{u}}}\leq\alpha}
\\&= \setcond{\vec{a}\succeq\vec{1}}{f^*\fp{\nabla f_s\fp{\vec{u}+\vec{1}}}\leq\alpha\paren{f_s\fp{\vec{u}}-f\fp{\vec{u}}}}.
\end{align*}
For a fixed value of $\vec{u}$, $\bracket{\nabla f_s\fp{\vec{u}+\vec{1}}}_d$ is linear in $\vec{a}$ for all $d$ and since $f^*$ is always convex, composing a convex function with a linear function of $\vec{a}$ is convex in $\vec{a}$. Finally, since $f_s\fp{\vec{u}}$ is linear in $\vec{a}$, the constraints of $S_{\alpha}\fp{f_s}$ are convex, and thus $S_{\alpha}\fp{f_s}$ is a convex set.

\end{document}